\documentclass[final,reqno]{siamltex}
\usepackage{latexsym,amsmath,amssymb,amsfonts,mathrsfs}
\usepackage{epsf,graphicx,epsfig,color,cite,cases}
\usepackage{subfigure,graphics,multirow,marginnote,graphicx}
\sloppy \brokenpenalty=10000

\newcommand{\R}{{\mathbb R}}

\newcommand{\Sp}{{\mathbb S}}

\newcommand{\no}{\nonumber}
\newcommand{\be}{\begin{eqnarray}}
\newcommand{\ben}{\begin{eqnarray*}}
\newcommand{\en}{\end{eqnarray}}
\newcommand{\enn}{\end{eqnarray*}}
\newcommand{\ba}{\backslash}
\newcommand{\pa}{\partial}

\newcommand{\G}{\Gamma}

\newcommand{\vep}{\varepsilon}
\newcommand{\Om}{\Omega}

\newcommand{\al}{\alpha}

\newcommand{\ol}{\overline}

\newtheorem{remark}[theorem]{Remark}
\newtheorem{algorithm}{Algorithm}[section]

\begin{document}
\renewcommand{\theequation}{\arabic{section}.\arabic{equation}}
\title{\bf A direct imaging method for inverse scattering by unbounded rough surfaces
}
\author{Xiaoli Liu\thanks{Academy of Mathematics and Systems Science, Chinese Academy of Sciences,
Beijing 100190, China and School of Mathematical Sciences, University of Chinese
Academy of Sciences, Beijing 100049, China ({\tt liuxiaoli@amss.ac.cn})}
\and Bo Zhang\thanks{LSEC, NCMIS and Academy of Mathematics and Systems Science, Chinese Academy of
Sciences, Beijing, 100190, China and School of Mathematical Sciences, University of Chinese
Academy of Sciences, Beijing 100049, China ({\tt b.zhang@amt.ac.cn})}
\and Haiwen Zhang\thanks{NCMIS and Academy of Mathematics and Systems Science, Chinese Academy of Sciences,
Beijing 100190, China ({\tt zhanghaiwen@amss.ac.cn})}}
\date{}

\maketitle

\begin{abstract}
This paper is concerned with the inverse scattering problem by an unbounded rough surface.
A direct imaging method is proposed to reconstruct the rough surface from the scattered near-field
Cauchy data generating by point sources and measured on a horizontal straight line segment at a finite
distance above the rough surface.
Theoretical analysis of the imaging algorithm is given for the case of a penetrable rough surface,
but the imaging algorithm also works for impenetrable surfaces with Dirichlet or impedance boundary conditions.
Numerical experiments are presented to show that the direct imaging algorithm
is fast, accurate and very robust with respect to noise in the data.
\end{abstract}

\begin{keywords}
Inverse scattering, unbounded rough surfaces, Cauchy data, Dirichlet boundary conditions,
impedance boundary conditions, transmission conditions
\end{keywords}

\begin{AMS}
78A46, 35P25
\end{AMS}

\pagestyle{myheadings}
\thispagestyle{plain}
\markboth{X. Liu, B. Zhang, and H. Zhang}{Direct imaging of unbounded rough surfaces}

\section{Introduction}\label{sec1}
\setcounter{equation}{0}

The ability to effectively find the geometrical information of unknown rough surfaces from the knowledge
of the scattered wave field is of great importance in various applications such as
radar and sonar detection, remote sensing, geophysics and nondestructive testing.

The aim of this paper is to study the inverse scattering problem by an unbounded rough surface, and in particular,
the imaging of the rough surface from the scattered near-field Cauchy data.
The wave propagation is governed by the Helmholtz equation. See Fig. \ref{fig0} for the problem geometry.
In this paper, we are restricted to the two-dimensional case for simplicity. However,
our imaging method and its analysis can be generalized to the three-dimensional case with appropriate modifications.

\begin{figure}[htbp]
\centering
\subfigure{\includegraphics[width=2.5in]{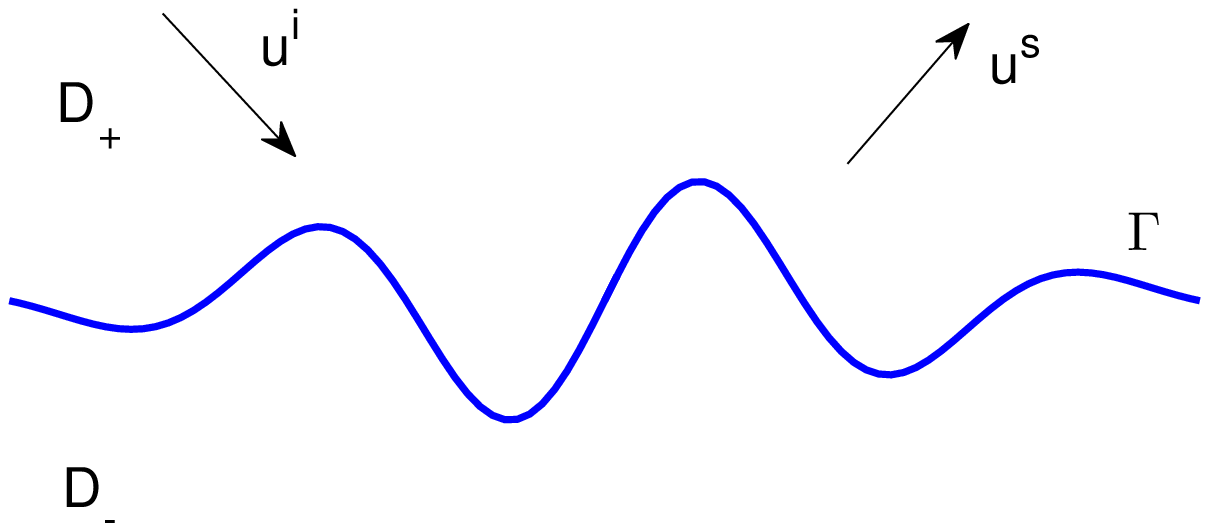}}
\subfigure{\includegraphics[width=2.5in]{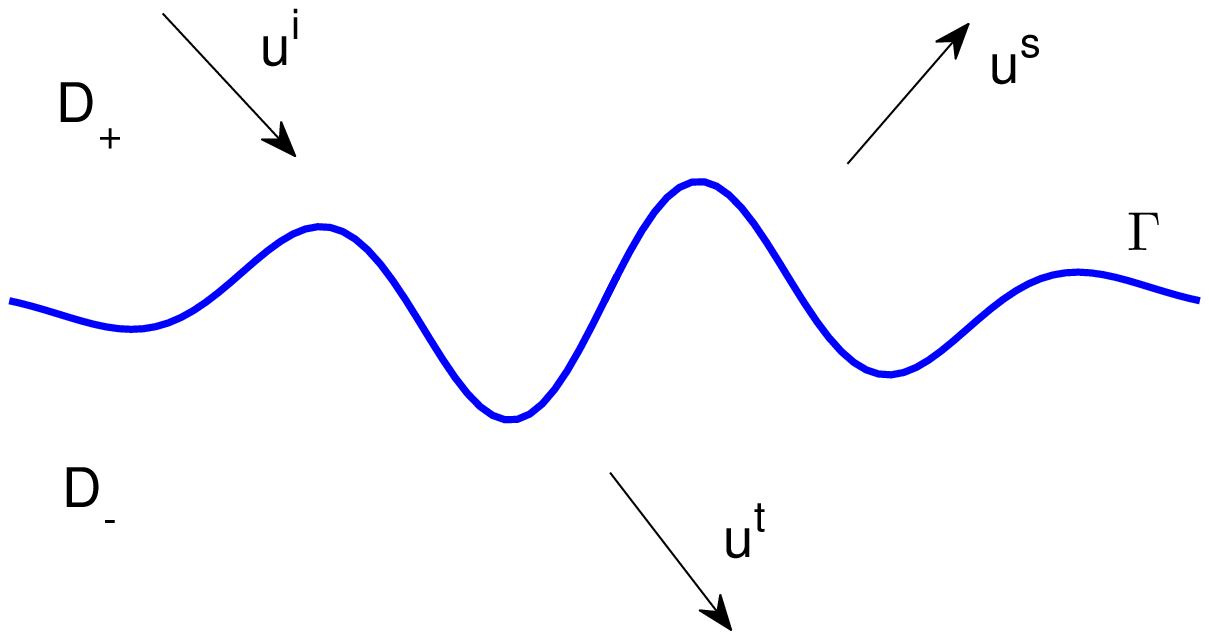}}
\caption{The scattering problems by an impenetrable rough surface (left) or
a penetrable rough surface (right).
}\label{fig0}
\end{figure}

Many optimization or iteration methods have been developed for inverse scattering problems \cite{BL05,LS14,LSZ16,vvA99}.
This kind of methods can recover the unknown surface very accurately, but they are time-consuming and also need
to know the physical property of the unknown surface in advance.
Recently, many methods have been proposed to avoid the huge computation. For example, the reverse time migration method,
the direct sampling method and the orthogonality sampling method are proposed
to reconstruct bounded obstacles in \cite{CCH13,IJZ12,P10}, respectively.
For the case of unbounded rough surfaces, a fast super-resolution method was proposed in \cite{BL13,BL14},
based on the transformed field expansion technique,
under the assumption that the rough surface is a small and smooth deformation of a plane surface.
Further, many related inverse scattering problems by unbounded surfaces have been extensively studied
for the case when the unbounded surface is a local perturbation of a plane surface \cite{BL11,DLLY,ZZ13},
for the case when the unbounded surface is periodic \cite{AK03,BL14IP}
and for the time-dependent case \cite{BP09,CL05}.

In this paper, we propose a direct imaging method to reconstruct the unbounded rough surface from the
scattered near-field Cauchy data generating by point sources and measured on a horizontal straight line
segment at a finite distance above the rough surface.
A main feature of our imaging method is its capability of depicting the profile of the surface
only through computing the inner products of the measured data and the fundamental solution
in the homogeneous background medium at each sampling point, leading to very cheap computation cost.
Further, our method does not require a prior knowledge of the physical property of the surface,
that is, the type of boundary conditions on the rough surface does not need to know in advance.
Thus, our imaging method works for both penetrable and impenetrable rough surfaces.
Furthermore, numerical experiments show that our imaging method can give an accurate and reliable
reconstruction of the unbounded rough surface, even for the case with a fairly large amount of noise
in the measured data.

To understand why the direct imaging method works, a theoretical analysis of the imaging method is presented.
To do this, we introduce Green's function $G(x,y)$ for the impedance half-plane (see \cite{CR96})
which plays an important role when analyzing the asymptotic behavior of the scattered field.
Further, an integral identity concerning the fundamental solution of the Helmholtz equation is established
for the unbounded rough surface, which is similar to the Helmholtz-Kirchhoff identity for bounded obstacles \cite{CCH13}.
In addition, a reciprocity relation is proved for the total field of the forward scattering problem.
Based on these results, the main results of the paper (Theorems \ref{thm307}-\ref{thm310}) are established
which lead to the required imaging function of the direct imaging method.

The remaining part of the paper is organized as follows.
Section \ref{sec2} gives a brief description of the scattering problems and introduces some notations and
inequalities that will be used in this paper.
Moreover, the well-posedness of the forward scattering problems, based on the integral equation method, will
be also presented without proof in this section.
In Section \ref{sec3}, we first conduct a theoretical analysis of the continuous imaging function
and then propose the direct imaging method for the inverse problem.
Finally, numerical examples are carried out in Section \ref{sec4}
to illustrate the effectiveness of the imaging method.

We conclude this section by introducing some notations used throughout the paper.
For $h\in\R$, define $U_h:=\{x\in\R^2~|~x_2>h\}$.
For $V\subset\R^n$ ($n=1,2$), denote by $BC(V)$ the set of bounded and continuous functions in $V$, a Banach space
under the norm $\|\psi\|_{\infty,V}:=\sup_{x\in V}|\psi(x)|$.
We write $\|\cdot\|_{\infty}$ for $\|\cdot\|_{\infty,\R^n}$.
For $0<\alpha\leq1$, denote by $BC^{0,\alpha}(V)$ the Banach space of functions $\phi\in BC(V)$ which are uniformly
H\"{o}lder continuous with exponent $\alpha$, equipped with the norm $\|\cdot\|_{0,\alpha,V}$ defined by
$\|\phi\|_{0,\alpha,V}:=\|\phi\|_\infty+\sup_{x,y\in V,x\neq y}[|\phi(x)-\phi(y)|/|x-y|^\alpha]$.
Given an open set $V\subset\R^2$ and $v\in L^\infty(V)$,
denote by $\pa_jv$ the (distributional) derivative $\pa v(x)/\pa x_j$, $j=1,2.$
Define $BC^{1,1}(V):=\{\varphi\in BC(V)~|~\pa_j\varphi\in BC(V),j=1,2\}$ with the norm
$\|\varphi\|_{1,V}:=\|\varphi\|_{\infty,V}+\|\pa_1\varphi\|_{\infty,V}+\|\pa_2\varphi\|_{\infty,V}$.
Finally, we introduce some spaces of smooth functions on the boundary $\G$.
Let $BC^{1,\al}(\G):=\{\varphi\in BC(\G)~|~\textrm{Grad}\,\varphi\in C^{0,\alpha}(\G)\}$
with the norm $\|\varphi\|_{1,\alpha,\G}:=\|\varphi\|_{\infty,\G}+\|\textrm{Grad}\,\varphi\|_{\infty,\G}$,
where $\textrm{Grad}$ denotes the surface gradient. For simplicity, we assume that
$f\in B(c_1,c_2):=\{f\in BC^{1,1}(\R)~|~f(s)\geq c_1,s\in\R\;\textrm{and}\;\|f\|_{1,1,\R}\leq c_2\}$
for some $c_1,c_2>0$.

\section{Problem Formulation}\label{sec2}
\setcounter{equation}{0}

In this section, we introduce the mathematical model of interest and propose
some existed results on the well-posedness for the forward scattering problems.
Some useful notations and inequalities used in the paper will also be presented.

\subsection{The forward scattering problems}

Given $f\in BC^{1,1}(\R)$ with $f_-:=\inf\limits_{x_1\in\R} f(x_1)>0,$
define the two-dimensional regions $D_+$ and $D_-$ by
\begin{align*}
D_+:=\left\{x=(x_1,x_2)\in\R^2 ~|~ x_2>f(x_1)\right\},\\
D_-:=\left\{x=(x_1,x_2)\in\R^2 ~|~ x_2<f(x_1)\right\},
\end{align*}
so that the unbounded rough surface $\Gamma$ is given by
\ben
\Gamma:=\left\{x=(x_1,f(x_1)) ~|~ x_1\in\R\right\}.
\enn

Let $u^i(x,y):=\varPhi_k(x,y)$ be an incident point source, where $\varPhi_k(x,y)$ is the fundamental solution
to the Helmholtz equation in two dimensions given by
\be\label{eq001}
\varPhi_k(x,y):=\frac{i}{4}H^{(1)}_0(k|x-y|), \quad x \not= y.
\en
Here, $H^{(1)}_0$ is the Hankel function of the first kind of order zero and $k$ is the wavenumber.
The forward scattering problem is to determine the unknown scattered wave $u^s$ in $D_+$ and
the unknown transmitted wave $u^t$ in $D_-$ such that the total field
$$
u:=\left\{
\begin{aligned}
&u^i + u^s &\quad &\text{in} \quad D_+,\\
&u^t &\quad &\text{in} \quad D_-
\end{aligned}
\right.
$$
satisfies the Helmholtz equation
\ben
\Delta u + k^2 u= 0  \quad \text{in} \quad \mathbb{R}^2\ba\Gamma
\enn
with $k=k_+>0$ in $D_+$ and $k=k_-> 0$ in $D_-$, respectively, where $k_+\not=k_-.$

We restrict our attention to the following three cases:

(1) the case when the total field vanishes on the boundary, so that the scattered field $u^s$,
the solution of the Helmholtz equation in $D_+$, satisfies the \emph{Dirichlet boundary condition}
$u^s = -u^i$ on $\Gamma$;

(2) the case when the total field satisfies the homogeneous \emph{impedance boundary condition}
${\pa_{\nu}u}-ik_+\rho u = 0$ on $\Gamma$, where $\nu(x)$ stands for the unit normal vector
at $x\in\Gamma$ pointing out of $D_+$ and $\pa_{\nu}u$ is the normal derivative of $u$;

(3) the case when the rough surface is penetrable and satisfies the \emph{transmission boundary condition}
$u^s+u^i=u^t, \pa_{\nu}u^s+\pa_{\nu}u^i=\pa_{\nu}u^t$ on $\Gamma.$

In order for the problem to have a unique solution, we adopt the so-called \emph{upward propagating
radiation condition (UPRC)} and \emph{downward propagating radiation condition (DPRC)} \cite{CZ98a,CZ98,CZ99}.
The scattered field $u^s$ is required to satisfy \emph{UPRC}:
for some $h_1>f_+:=\sup\limits_{x_1\in\mathbb{R}}f(x_1)$ and $\phi_1\in L^\infty(\Gamma_{h_1}),$
\be\label{eq206}
u^s(x)=2\int_{\Gamma_{h_1}}\frac{\pa\varPhi_{k_+}(x,y)}{\pa y_2}\phi_1(y)ds(y),\quad x\in U_{h_1}
\en
and the transmitted field $u^t$ is required to satisfy \emph{DPRC}:
for some $h_2<f_-$ and $\phi_2\in L^\infty(\Gamma_{h_2}),$
\be\label{eq207}
u^t(x)=-2\int_{\Gamma_{h_2}}\frac{\pa\varPhi_{k_-}(x,y)}{\pa y_2}\phi_2(y)ds(y),
\quad x\in\mathbb{R}^2\ba\ol{U}_{h_2},
\en
respectively. Here, $\varPhi_{k_\pm}$ is defined as $\varPhi$ with $k$ replaced by $k_\pm$.

The above scattering problems can now be formulated as the following boundary value problems for
the scattered field $u^s$ and the transmitted field $u^t$.

\textbf{\em Dirichlet scattering problem (DSP)}: Given $g\in BC(\Gamma)$,
determine $u^s\in C^2(D_+)\cap C(\ol{D}_+)$ such that

(i) $u^s$ is a solution of the Helmholtz equation
\be\label{eq204}
\Delta u^s + k_+^2 u^s = 0 \quad \text{in} \quad D_+,
\en

(ii) $u^s=g$ on $\Gamma$,

(iii) For some $\beta\in\R$,
\be\label{eq205}
\sup_{x \in D_+}x_2^\beta|u^s(x)|<\infty,
\en

(iv) $u^s$ satisfies the UPRC (\ref{eq206}).

Let $\mathscr{R}(D_+)$ denote the set of functions $w\in C^2(D_+)\cap C(\ol{D}_+)$
whose normal derivative defined by $\pa_{\nu}w(x):=\lim_{h\rightarrow0+}\nu(x)
\cdot\nabla w(x+h\nu(x))$ exists uniformly for $x$ on any compact subset of $\Gamma$.

\textbf{\em Impedance scattering problem (ISP)}: Given $g\in BC(\Gamma)$, $\rho\in BC(\Gamma)$,
determine $u^s\in \mathscr{R}(D_+)$ such that

(i) $u^s$ is a solution of the Helmholtz equation (\ref{eq204}) in $D_+$,

(ii) $\pa_{\nu}u^s-ik_+\rho u^s=g$ on $\Gamma$,

(iii) $u^s$ satisfies (\ref{eq205}) for some $\beta\in\R$,

(iv) $u^s$ satisfies the UPRC (\ref{eq206}),

(v) for some $\theta\in(0,1)$ and some constant $C_\theta>0$,
\ben
|\nabla u(x)|\leq C_\theta[x_2-f(x_1)]^{\theta-1}
\enn
for $x\in D_+\ba\ol{U}^+_b$ with $b=f_++1$.

The scattering problem by a penetrable rough surface can be formulated as follows.

\textbf{\em Transmission scattering problem (TSP)}: Let $\alpha\in(0,1)$, $h_1>f_+$ and $h_2<f_-$.
Given $g_1\in BC^{1,\alpha}(\Gamma)$ and $g_2\in BC^{0,\alpha}(\Gamma)$,
determine a pair of functions $(u^s,u^t)$ with $u^s\in C^2(D_+)\cap BC^1(\ol{D}_+\ba U_{h_1})$
and $u^t\in C^2(D_-)\cap BC^1(\ol{D}_-\cap\ol{U}_{h_2})$ such that

(i) $u^s$ is a solution of the Helmholtz equation (\ref{eq204}) in $D_+$
and $u^t$ is a solution of the Helmholtz equation
$\Delta u^t+k_-^2u^t=0$ in $D_-,$

(ii) $ u^s -u^t=g_1,~\pa_{\nu}u^s-\pa_{\nu}u^t=g_2\quad\textrm{on}\;\;\G$,

(iii) $u^s$ satisfies (\ref{eq205}) and $u^t$ satisfies
\ben
\sup_{x \in D_-}x_2^\beta|u^t(x)|<\infty
\enn
for some $\beta\in\R$,

(iv) $u^s$ satisfies the UPRC (\ref{eq206})
 and $u^t$ satisfies the DPRC (\ref{eq207}).

\subsection{Some useful notations}

In this subsection we introduce some basic notations and fundamental functions that will be needed in the
subsequent discussions. First, note that $H^{(1)'}_0=-H^{(1)}_1$ \cite{AS64} and that
by \cite[equation(9.2.7)]{AS64},
\be\label{eq317}
H^{(1,2)}_n(t)=\sqrt{\frac{2}{\pi t}}e^{\pm i(t-\frac{n \pi}{2}-\frac{\pi}{4})}
\left\{1+ O\left(\frac{1}{t}\right)\right\},\quad t\to\infty, \quad n=0,1,....
\en
This, combined with equation (\ref{eq001}), implies that
\be
\label{eq311}|\varPhi_k(x,y)|&\leq& C~|x-y|^{-1/2},\\
\label{eq312}\left|\frac{\pa\varPhi_k(x,y)}{\pa x_i}\right|&\leq& C~|x_i-y_i|~|x-y|^{-3/2}, \quad i=1,2
{\color{red},}\\
\label{eq2205}|\varPhi_k(x,y)-\varPhi_k(x,y')| &\leq& C~(1+|x_2|)(1+|y_2|)(|x-y|^{-3/2}+|x-y'|^{-3/2})
\en
for $|x-y|\geq\delta>0,~x=(x_1,x_2),~y=(y_1,y_2),~y'=(y_1,-y_2)$
with $C>0$ depending only on $k,\delta$.

In this paper, we also need the following \emph{impedance Green's function for}
the Helmholtz equation $(\Delta+k^2)u=0$ in the half-planes $U^{\pm}_a:=\{x=(x_1,x_2)\in\R^2~|~x_2\gtrless a\}$.
For any $k>0$ define
\ben
G_k^{\pm}(x,y;a):=\varPhi_k(x,y)+\varPhi_k(x,y'_a)+P^{\pm}_k(x-y'_a),\quad x,y\in U^\pm_a,
\enn
where
\ben
P^{\pm}_k(z):=\frac{|z|e^{ik|z|}}{\pi}
\int^\infty_0\frac{t^{-1/2}e^{-k|z|t}[|z|\pm z_2(1+it)]}{\sqrt{t-2i}[|z|t-i(|z|\pm z_2)]^2}dt,
\quad z\in \ol{U}^\pm_0
\enn
with the square root being taken so that $-\pi/2<\arg\sqrt{t-2i}<0$ and $y_a'= (y_1, 2a-y_2)$.
From \cite{CRZ99}, it is known that $P^{\pm}_k\in C(\ol{U}^\pm_0)\cap C^\infty(\ol{U}^\pm_0\ba\{0\})$
and $G_k^{\pm}(x,y;a)$ is a radiating solution in $U^\pm_a$ and satisfies the impedance boundary condition
$\pa_{\nu}P^\pm_k\pm ikP^\pm_k=0$ on $\G_a:=\{x=(x_1,x_2)~|~x_1\in\mathbb{R},~x_2=a\}$.
Further, it is shown in \cite[equation(2.14)]{NAC03} that
 \be
 \label{eq2200}|G_k^{\pm}(x,y;a)|&\leq& C~(1+|x_1-y_1|)^{-3/2}\\
 \label{eq2204}|\nabla_x G_k^{\pm}(x,y;a)| &\leq& C~(1+|x_1-y_1|)^{-3/2}\\
 \label{eq2201}|\nabla_y G_k^{\pm}(x,y;a)| &\leq& C~(1+|x_1-y_1|)^{-3/2}\\
 \label{eq2202}|\nabla_x\pa_{\nu(y)}G_k^{\pm}(x,y;a)|&\leq& C~(1+|x_1-y_1|)^{-3/2}
 \en
for $x\in\G_H,~y\in\G,~|x_2-y_2|\geq\delta>0$, with $C>0$ depending only on $a,k,\delta,\G$ and $H$.

We end this subsection by introducing certain layer potentials and boundary integral operators.
For $a<f_-$ define the single- and double-layer potentials: for $x\in U^+_a\ba\Gamma$
\ben
&&(\mathcal{S}^+_{k_{\color{red}+},a}\varphi)(x):=\int_\G G^+_{k_+}(x,y;a)\varphi(y)ds(y),\\
&&(\mathcal{D}^+_{k_{\color{red}+},a}\varphi)(x):=\int_\G\frac{\pa}{\pa\nu(y)}G^+_{k_+}(x,y;a)\varphi(y)ds(y)
\enn
and the boundary integral operators: for $x\in\Gamma$
\begin{align}
(S^+_{k_+,a}\varphi)(x)&:=\int_\Gamma G^+_{k_+}(x,y;a)\varphi(y)ds(y), \no\\
(K^+_{k_+,a}\varphi)(x)&:=\int_\Gamma\frac{\pa}{\pa\nu(y)}G^+_{k_+}\varphi(y)ds(y),\no\\
(K'^{+}_{k_+,a}\varphi)(x)&:=\int_\Gamma\frac{\pa}{\pa\nu(x)}G^+_{k_+}\varphi(y)ds(y),\no\\
(T^+_{k_+,a}\varphi)(x)&:=\frac{\pa}{\pa\nu(x)}\int_\Gamma\frac{\pa}{\pa\nu(y)}G^+_{k_+}\varphi(y)ds(y).\no
\end{align}
Further, for $a>f_+$ the layer-potential operators $\mathcal{S}^-_{k_-,a},\,\mathcal{D}^-_{k_-,a}$
for $x\in U^-_a\ba\Gamma$ and the boundary integral operators
$S^-_{k_-,a},\,K^-_{k_-,a},\,K'^{-}_{k_-,a},\,T^-_{k_-,a}$ for $x\in\Gamma$ can be defined similarly.

\subsection{Well-posedness of the forward scattering problems}

The well-posedness of the forward scattering problems described in Section \ref{sec2} has been studied
by using the variational and integral equation methods (see, e.g. \cite{CM05,CZ98,CZ99,NAC03,ZC03}).
In this subsection, we present these well-posedness results based on the integral equation method,
which will be needed in the remaining part of this paper.

\begin{theorem}\label{thm301}
(see \cite{CZ98}) Assume that $f\in B(c_1,c_2)$.
Then the problem (DSP) has exactly one solution in the form
\be\label{eq301}
u^s(x)=(\mathcal{D}^+_{k_+,0}\varphi)(x),\quad x\in D_+.
\en
Here, the density function $\varphi\in BC(\Gamma)$ is the unique solution to the boundary integral equation
\be\label{eq302}
A_D\varphi:=\left(-\frac{1}{2}I+K^+_{k_+,0}\right)\varphi=g,
\en
where the integral operator $A_D$ is bijective (and so boundedly invertible) in $BC(\Gamma)$.
Further, for each $g\in BC(\Gamma)$ we have the estimate
\be\label{eq303}
|u^s(x)|\leq C x_2^{1/2}\|g\|_{\infty,\Gamma}
\en
for some constant $C>0$ independent of $g.$
\end{theorem}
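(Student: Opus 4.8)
The result is a cited one from \cite{CZ98}; the natural route is the \emph{integral equation method} built on the impedance Green's function $G^+_{k_+}(\cdot,\cdot;0)$, whose boundary condition on $\G_0$ forces the kernel to decay like $(1+|x_1-y_1|)^{-3/2}$ along the surface and thereby tames the unbounded geometry. First I would justify the ansatz (\ref{eq301}): since, for each fixed $y$, $G^+_{k_+}(x,y;0)$ is a radiating solution of the Helmholtz equation in $U^+_0\supset D_+$, the double-layer potential $\mathcal{D}^+_{k_+,0}\varphi$ solves (\ref{eq204}) in $D_+$ and inherits the radiation behavior encoded by the UPRC (\ref{eq206}) automatically. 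Using the decay estimates (\ref{eq2200})--(\ref{eq2202}) together with $\varphi\in BC(\G)$, the kernel $\pa_{\nu(y)}G^+_{k_+}$ is integrable in $y_1$ uniformly on horizontal lines, so the potential is well defined and bounded on each $\G_H$; tracking the dependence of the constants in (\ref{eq2200})--(\ref{eq2202}) on the height $H=x_2$ gives the growth rate in (\ref{eq303}) and shows (\ref{eq205}) holds.

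Second, I would pass to the boundary. The jump relation for the double-layer potential with the weakly singular kernel $\pa_{\nu(y)}G^+_{k_+}$ gives, for the trace from $D_+$,
\ben
(\mathcal{D}^+_{k_+,0}\varphi)^+\big|_\G &=& \left(-\frac{1}{2}I+K^+_{k_+,0}\right)\varphi\quad\text{on }\G,
\enn
so imposing $u^s=g$ on $\G$ is exactly the integral equation (\ref{eq302}), $A_D\varphi=g$. Conversely, any $BC(\G)$-solution of (\ref{eq302}) generates, via (\ref{eq301}), a solution of (DSP). Hence existence and uniqueness for (DSP) are equivalent to the bijectivity of $A_D$ on $BC(\G)$, and (\ref{eq303}) follows once $A_D^{-1}$ is shown to be bounded.

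The crux, and the main obstacle, is the bounded invertibility of $A_D=-\frac{1}{2}I+K^+_{k_+,0}$ on $BC(\G)$. Boundedness of $K^+_{k_+,0}$ is immediate from the integrable decay (\ref{eq2201}). Injectivity reduces to uniqueness for (DSP): if $A_D\varphi=0$, the associated field solves the homogeneous problem and the UPRC (\ref{eq206}) together with a Rellich-type radiation argument forces $u^s\equiv0$, so the jump relation returns $\varphi=0$. The genuine difficulty is surjectivity, because $\G$ is unbounded: $K^+_{k_+,0}$ is \emph{not} compact on $BC(\G)$, so classical Riesz--Fredholm theory does not apply. Following \cite{CZ98}, I would instead establish a uniform a priori estimate
\ben
\|\varphi\|_{\infty,\G} &\leq& C\,\|A_D\varphi\|_{\infty,\G},
\enn
with $C$ depending only on $c_1,c_2,k_+$ (this is where membership of $f$ in $B(c_1,c_2)$ is essential, keeping all constants uniform), and then deduce invertibility either from the generalized Fredholm/limit-operator theory for integral operators on unbounded boundaries---invertibility being guaranteed once $A_D$ and all its limit operators are injective---or by a homotopy in the surface to the flat case $f\equiv\text{const}$, for which $A_D$ reduces to a convolution that is explicitly invertible by the Fourier transform. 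Establishing this a priori bound uniformly over $B(c_1,c_2)$ is the technical heart of the argument.

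Finally, with $A_D^{-1}$ bounded, I would set $\varphi=A_D^{-1}g$, so that $\|\varphi\|_{\infty,\G}\leq C\|g\|_{\infty,\G}$, and substitute into (\ref{eq301}); combining this with the potential estimate from the first step yields the field bound (\ref{eq303}), completing the proof.
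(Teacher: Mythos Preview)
The paper does not give its own proof of this theorem; it is quoted verbatim from \cite{CZ98} and merely stated in Section~2.3 as background for the well-posedness of the forward problems. Your sketch is a faithful outline of the integral equation approach of that reference: the double-layer ansatz with the impedance half-plane Green's function, the jump relation reducing the Dirichlet condition to $A_D\varphi=g$, uniqueness via a Rellich-type argument for the homogeneous (DSP), and surjectivity via uniform a priori bounds on $B(c_1,c_2)$ combined with a generalized Fredholm argument for operators of the form $I+\text{(integral operator with integrable kernel)}$ on unbounded boundaries. This is exactly the route taken in \cite{CZ98} (and refined in \cite{CRZ99,ZC03}), so there is nothing to correct or contrast.
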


\begin{theorem}\label{thm302}
(see \cite{ZC03}) Assume that $f\in B(c_1,c_2)$ and $\rho\in B(c_1,c_2)$.
Then the problem (ISP) has exactly one solution in the form
\be\label{eq304}
u^s(x)=(\mathcal{S}^+_{k_+,0}\varphi)(x),\quad x\in D_+.
\en
Here, the density function $\varphi\in BC(\Gamma)$ is the unique solution to the boundary integral equation
\be\label{eq305}
A_I\varphi:=\left(\frac{1}{2}I+K'^{+}_{k_+,0}-ik_+\rho S^+_{k_+,0}\right)\varphi=g,
\en
where the integral operator $A_I$ is bijective (and so boundedly invertible) in $BC(\Gamma)$.
Moreover, for each $g\in BC(\Gamma)$ we have the estimate
\be\label{eq306}
|u^s(x)|\leq C x_2^{1/2}\|g\|_{\infty,\Gamma}
\en
for some constant $C>0$ independent of $g.$
\end{theorem}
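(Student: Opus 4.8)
The plan is to adapt the boundary integral equation method of \cite{ZC03}, following the same line as the treatment of (DSP) in Theorem \ref{thm301} but using a single-layer rather than a double-layer potential to match the impedance condition. First I would seek the scattered field in the form $u^s=\mathcal{S}^+_{k_+,0}\varphi$ with density $\varphi\in BC(\G)$. Since $G^+_{k_+}(x,y;0)$ is itself a radiating solution of the Helmholtz equation in $U^+_0$ that already satisfies the impedance boundary condition on $\G_0$, this ansatz automatically realises the Helmholtz equation (\ref{eq204}), the growth bound (\ref{eq205}) and the UPRC (\ref{eq206}); moreover, the mild near-boundary singularity of the gradient of a single-layer potential with continuous density is precisely of the type permitted by condition (v). Thus only the boundary condition (ii) remains to be enforced.

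Next I would convert (ii) into the integral equation (\ref{eq305}). The single-layer potential is continuous across $\G$, so its trace is $S^+_{k_+,0}\varphi$, while the standard jump relation for its normal derivative (with $\nu$ oriented as in (ii)) gives $\pa_\nu\mathcal{S}^+_{k_+,0}\varphi=(\half I+K'^{+}_{k_+,0})\varphi$ on $\G$. Substituting these into $\pa_\nu u^s-ik_+\rho u^s=g$ yields exactly $A_I\varphi=g$. The kernel decay estimates (\ref{eq2200})--(\ref{eq2202}) for $G^+_{k_+}$ and its derivatives, together with $f\in B(c_1,c_2)$ and $\rho\in B(c_1,c_2)$, show that $S^+_{k_+,0}$, $K'^{+}_{k_+,0}$ and multiplication by $\rho$ map $BC(\G)$ boundedly into itself, so that $A_I$ is a bounded operator on $BC(\G)$.

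The heart of the argument, and the step I expect to be the main obstacle, is the bijectivity of $A_I$ on $BC(\G)$. Because $\G$ is unbounded the boundary operators fail to be compact, so the classical Riesz--Fredholm theory is unavailable and I would instead invoke the operator theory for integral equations on unbounded domains developed in \cite{ZC03}. For injectivity, if $A_I\varphi=0$ then $u^s=\mathcal{S}^+_{k_+,0}\varphi$ solves the homogeneous (ISP); a Rellich-type identity, in which the sign condition $\rho\ge c_1>0$ supplies the dissipativity needed to control the boundary integral of $\rho|u^s|^2$, forces $u^s\equiv0$ in $D_+$, and the corresponding argument in the lower half-plane then yields $\varphi=0$. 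For surjectivity one checks that $A_I$ lies in the operator class for which injectivity already implies invertibility, with a bound on $\|A_I^{-1}\|$ that is uniform over $f\in B(c_1,c_2)$ and $\rho\in B(c_1,c_2)$; this passes through uniform a priori estimates combined with a continuation argument in the defining parameters. The uniqueness part of (ISP) then follows from the same Rellich identity.

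Finally, the pointwise bound (\ref{eq306}) would be obtained by writing $u^s=\mathcal{S}^+_{k_+,0}A_I^{-1}g$, using the uniform bound $\|\varphi\|_{\infty,\G}=\|A_I^{-1}g\|_{\infty,\G}\le C\|g\|_{\infty,\G}$, and then estimating $|\mathcal{S}^+_{k_+,0}\varphi(x)|$ through the growth of $G^+_{k_+}(x,y;0)$ in $x_2$; integrating the kernel bound over $\G$ produces the factor $x_2^{1/2}$, exactly as in the Dirichlet estimate (\ref{eq303}).
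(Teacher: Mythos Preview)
Your sketch is correct and faithfully reproduces the integral equation strategy of \cite{ZC03}: the single-layer ansatz with the impedance half-plane Green's function, reduction to \eqref{eq305} via the jump relations, bijectivity of $A_I$ on $BC(\Gamma)$ through the non-Fredholm operator theory for integral equations on the real line (with uniqueness coming from a Rellich-type identity that exploits $\rho\ge c_1>0$), and the $x_2^{1/2}$ bound from the kernel estimates. The paper itself gives no proof of this theorem at all---it is simply quoted from \cite{ZC03} as background for the inverse problem---so there is nothing further to compare.
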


\begin{theorem}\label{thm303}
(see \cite{CZ99,LSZR16,NAC03}) Given $g_1\in BC^{1,\alpha}(\Gamma),$ $g_2\in BC^{0,\alpha}(\Gamma)$
and $f\in B(c_1,c_2)$ and for $h\in\R$ with $-h<f_-<f_+<h$,
the problem (TSP) has exactly one solution $(u^s,u^t)$ in the form
\be\label{eq307}
&&u^s(x)=(\mathcal{D}^+_{k_+,-h}{\varphi_1})(x)+(\mathcal{S}^+_{k_+,-h}{\varphi_2})(x),\quad x\in D_+,\\ \label{eq308}
&&u^t(x)=(\mathcal{D}^-_{k_-,h}{\varphi_1})(x)+(\mathcal{S}^-_{k_-,h}{\varphi_2})(x),\quad x\in D_-.
\en
Here, $\varphi:=({\varphi_1},{\varphi_2})^T\in BC^{1,\al}(\G)\times BC^{0,\al}(\G)$
is the unique solution to the boundary integral equation
\be\label{eq309}
A_T\varphi=G
\en
with
\be\label{eq310}
A_T=\left(\begin{array}{cc}
K^+_{k_+,-h}-K^-_{k_-,h}+I&S^+_{k_+,-h}-S^-_{k_-,h}\\
T^+_{k_+,-h}-T^-_{k_-,h}&K'^{+}_{k_+,-h}-K'^{-}_{k_-,h}-I
\end{array}\right),\quad
G=\left(\begin{array}{c}
g_1\\
g_2
\end{array}\right),
\en
where the integral operator $A_T$ is bijective (and so boundedly invertible) in $[BC(\Gamma)]^2$.
Moreover, $u^s,u^t$ depend continuously on $\|g_1\|_{\infty,\Gamma}$ and $\|g_2\|_{\infty,\Gamma}$,
and $\nabla u^s,\nabla u^t$ depend continuously on $\|g\|_{1,\alpha,\Gamma}$
and $\|g_2\|_{0,\alpha,\Gamma}$.
\end{theorem}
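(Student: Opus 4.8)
The plan is to prove Theorem~\ref{thm303} by the \emph{boundary integral equation method}, building the solution from the impedance half-plane Green's functions $G^+_{k_+,-h}$ and $G^-_{k_-,h}$ introduced above. First I would fix the combined layer-potential ansatz $(\ref{eq307})$--$(\ref{eq308})$: represent $u^s$ in $D_+$ by $\mathcal{D}^+_{k_+,-h}\varphi_1+\mathcal{S}^+_{k_+,-h}\varphi_2$ and $u^t$ in $D_-$ by $\mathcal{D}^-_{k_-,h}\varphi_1+\mathcal{S}^-_{k_-,h}\varphi_2$. The point of using $G^\pm$ rather than the free-space $\varPhi_k$ is that, by its construction in \cite{CRZ99}, $G^+_{k_+,-h}$ is a radiating solution of the Helmholtz equation in $U^+_{-h}\supset D_+$; consequently the ansatz satisfies $(\ref{eq204})$ in $D_+$ and the UPRC $(\ref{eq206})$ \emph{automatically}, and symmetrically $u^t$ satisfies its Helmholtz equation and the DPRC $(\ref{eq207})$. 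The decay bounds $(\ref{eq2200})$--$(\ref{eq2202})$ then guarantee that the potentials are well defined for densities in $BC(\G)$ and that the growth bound $(\ref{eq205})$ holds.

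Next I would impose the transmission conditions (ii). Taking the boundary traces and normal derivatives of the two potentials and invoking the standard jump relations (single-layer traces continuous, normal derivatives jumping by $\pm\tfrac12$; double-layer traces jumping by $\mp\tfrac12$, hypersingular normal derivatives continuous) turns $u^s-u^t=g_1$ and $\pa_\nu u^s-\pa_\nu u^t=g_2$ into the $2\times2$ system $A_T\varphi=G$ with $A_T$ exactly as in $(\ref{eq310})$; the $\pm I$ on the diagonal is the sum of the jumps contributed by the $D_+$ and $D_-$ potentials. Conversely, any $\varphi$ solving $(\ref{eq309})$ yields a pair $(u^s,u^t)$ meeting all of (i)--(iv), so well-posedness of (TSP) is \emph{equivalent} to bounded invertibility of $A_T$ on $[BC(\G)]^2$.

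The heart of the proof, and the main obstacle, is showing that $A_T$ is bijective on $[BC(\G)]^2$. Since $\G$ is unbounded and the kernels decay only polynomially by $(\ref{eq2200})$--$(\ref{eq2202})$, the integral parts of $A_T$ are \emph{not} compact, so Riesz--Fredholm theory does not apply directly. I would instead argue in two stages. For injectivity, if $G=0$ then the associated $(u^s,u^t)$ solves the homogeneous (TSP); a Green's/Rellich-type identity on the truncated strip between $\G_{h}$ and $\G_{-h}$, combined with the UPRC/DPRC and the a priori growth bound, forces $u^s\equiv u^t\equiv0$, whence $\varphi=0$ by the jump relations. For bounded invertibility, following the solvability framework for second-kind integral equations on unbounded rough boundaries developed in \cite{CZ99,NAC03}, one upgrades injectivity to invertibility by establishing uniform a priori estimates for truncated operators and passing to the limit, or equivalently by the non-standard Fredholm theory available in $BC(\G)$ thanks to the decay of $G^\pm$. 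Once $A_T^{-1}$ is bounded, the continuous-dependence statements follow from $\varphi=A_T^{-1}G$ together with the mapping properties $(\ref{eq2200})$--$(\ref{eq2202})$ of the potential operators, which transfer the bounds on $\|g_1\|_{\infty,\G},\|g_2\|_{\infty,\G}$ and on $\|g\|_{1,\alpha,\G},\|g_2\|_{0,\alpha,\G}$ to $u^s,u^t$ and to $\na u^s,\na u^t$, respectively.
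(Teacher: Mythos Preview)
The paper does not actually prove this theorem: it is stated with the citation ``(see \cite{CZ99,LSZR16,NAC03})'' and presented without proof as one of the known well-posedness results quoted in Section~\ref{sec2}. Your outline is essentially the strategy carried out in those references---combined layer-potential ansatz with the impedance half-plane Green's functions, reduction to the $2\times2$ system $A_T\varphi=G$ via the jump relations, uniqueness for the homogeneous (TSP) giving injectivity, and then the non-compact solvability theory on unbounded boundaries from \cite{CZ99,NAC03} to obtain bounded invertibility---so there is nothing to compare against here and your plan is the right one.
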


\begin{remark}\label{wplp} {\rm
It has been shown in \cite{ACH02} that once the unique solvability of the boundary integral equations
(\ref{eq302}), (\ref{eq305}) or (\ref{eq309}) has been established in the space of bounded and continuous
functions, the unique solvability in $L^p$ $(1\leq p\leq\infty)$ can be obtained. This general result has been
applied in \cite{ACH02} to the integral equation formulation proposed in \cite{CZ98,CZ99,NAC03,ZC03}.
}
\end{remark}

\section{Direct imaging method for the inverse problem}\label{sec3}
\setcounter{equation}{0}

This section presents a direct imaging method to solve the inverse problem.
To do this, we first establish certain results for the forward scattering problems associated with
incident point sources.
In the following proofs, the constant $C>0$ may be different at different places.

\begin{lemma}\label{lem2301}
Assume that $(u^s,u^t)$ is the solution to the problem (TSP) with the boundary data $g=(g_1,g_2)^T$.
If $g\in [L^p(\Gamma)]^2$ with $1\leq p\leq\infty$, then $u^s,\pa_{\nu}u^s\in L^p(\G_H)$
for any $H>f_+$ and $u^t,\pa_{\nu}u^t\in L^p(\G_h)$ for any $h<f_-$.
\end{lemma}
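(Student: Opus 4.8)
The plan is to combine the layer-potential representation of Theorem \ref{thm303} with the kernel decay estimates (\ref{eq2200})--(\ref{eq2202}) and an $L^1$-convolution (Young's inequality) argument. First I would invoke Remark \ref{wplp}: since the integral equation (\ref{eq309}) is uniquely solvable in $[BC(\G)]^2$, the result of \cite{ACH02} guarantees that for $g=(g_1,g_2)^T\in[L^p(\G)]^2$ the same equation $A_T\varphi=G$ has a unique solution $\varphi=(\varphi_1,\varphi_2)^T\in[L^p(\G)]^2$ depending boundedly on the data, i.e. $\|\varphi\|_{[L^p(\G)]^2}\leq C\|g\|_{[L^p(\G)]^2}$. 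The solution pair $(u^s,u^t)$ is then still represented by the potentials (\ref{eq307})--(\ref{eq308}), now with $L^p$ densities.

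Next I would restrict $u^s$ to a measurement line $\G_H$ with $H>f_+$. For $x\in\G_H$ and $y\in\G$ one has $x_2-y_2=H-f(y_1)\geq H-f_+=:\delta>0$, so the vertical-separation hypothesis of (\ref{eq2200})--(\ref{eq2202}) is satisfied. Parametrising $\G$ by $y_1$, the arc-length element $ds(y)=\sqrt{1+f'(y_1)^2}\,dy_1$ is bounded above and below (as $f\in B(c_1,c_2)$), so $\|\cdot\|_{L^p(\G)}$ is equivalent to the $L^p(\R;dy_1)$ norm of the pulled-back density, while on $\G_H$ one has $ds(x)=dx_1$. Writing $u^s|_{\G_H}=\mathcal{D}^+_{k_+,-h}\varphi_1+\mathcal{S}^+_{k_+,-h}\varphi_2$, the kernels $\pa_{\nu(y)}G^+_{k_+}(x,y;-h)$ and $G^+_{k_+}(x,y;-h)$ are, by (\ref{eq2201}) and (\ref{eq2200}), both dominated by $C(1+|x_1-y_1|)^{-3/2}$, whence
\[
|u^s(x)|\leq C\int_\R(1+|x_1-y_1|)^{-3/2}\big(|\varphi_1(y)|+|\varphi_2(y)|\big)\,dy_1,\quad x\in\G_H.
\]
Since $\kappa(t):=(1+|t|)^{-3/2}\in L^1(\R)$, Young's inequality $\|\kappa*\psi\|_{L^p}\leq\|\kappa\|_{L^1}\|\psi\|_{L^p}$ (valid for all $1\leq p\leq\infty$) yields $\|u^s\|_{L^p(\G_H)}\leq C\|\varphi\|_{[L^p(\G)]^2}\leq C\|g\|_{[L^p(\G)]^2}$, so $u^s\in L^p(\G_H)$.

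For $\pa_\nu u^s$ on $\G_H$ the outward normal is vertical, so $\pa_\nu u^s=\pa_{x_2}u^s$; because $x\in\G_H$ stays at distance $\geq\delta$ from $\G$, the potentials are smooth there and I may differentiate under the integral sign without meeting the hypersingular operator $T^+$. This produces the kernels $\pa_{x_2}G^+_{k_+}$ and $\pa_{x_2}\pa_{\nu(y)}G^+_{k_+}$, which are components of $\na_xG^+_{k_+}$ and $\na_x\pa_{\nu(y)}G^+_{k_+}$ and are therefore again bounded by $C(1+|x_1-y_1|)^{-3/2}$ via (\ref{eq2204}) and (\ref{eq2202}). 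The identical convolution estimate gives $\pa_\nu u^s\in L^p(\G_H)$. The assertions for $u^t$ and $\pa_\nu u^t$ on $\G_h$ with $h<f_-$ follow in exactly the same manner, using the representation (\ref{eq308}), the analogous decay estimates for $G^-_{k_-,h}$, and the separation $y_2-x_2=f(y_1)-h\geq f_--h>0$.

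I expect the routine parts to be the kernel domination and Young's inequality. The step that genuinely requires care is the passage from $BC$-solvability to $L^p$-solvability of (\ref{eq309}), which I would justify entirely by citing Remark \ref{wplp} (i.e. \cite{ACH02}); the accompanying point worth stating explicitly is that evaluation on the separated lines $\G_H,\G_h$ never meets the singularity of the kernels, so no jump relations or principal-value subtleties enter and differentiation under the integral is legitimate.
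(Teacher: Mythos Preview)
Your proposal is correct and follows essentially the same approach as the paper's proof: invoke Remark \ref{wplp} to obtain $L^p$ densities $(\varphi_1,\varphi_2)$, pull back to $\R$ via the parametrisation of $\G$, dominate the kernels of the single- and double-layer potentials (and their $x$-derivatives) on the separated line $\G_H$ by $C(1+|x_1-y_1|)^{-3/2}$ using (\ref{eq2200})--(\ref{eq2202}) and (\ref{eq2204}), and conclude with Young's inequality; the $u^t$ case is analogous. Your write-up is in fact a bit more explicit than the paper's about why differentiation under the integral is legitimate and why the separation hypothesis of the kernel estimates is met.
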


\begin{proof}
From Theorem \ref{thm303} and Remark \ref{wplp}, the scattered field $u^s$ can be written in the form
\be\label{eq318}
u^s(x)=\int_\G\frac{\pa G^+_{k_+}(x,\xi)}{\pa \nu(\xi)}\varphi_1(\xi)ds(\xi)
+ \int_\G G^+_{k_+}(x,\xi)~\varphi_2(\xi)ds(\xi),\quad x\in D_+,
\en
where $(\varphi_1,\varphi_2)^T\in [L^p(\Gamma)]^2$ is the unique solution to the boundary integral
equation (\ref{eq309}) with $g\in [L^p(\Gamma)]^2$.
Define $\phi:=(\phi_1,\phi_2)^T\in [L^p(\mathbb{R})]^2$ by
\ben\label{eq2308}
\phi_1(s):=\varphi_1\left(\left(s,f(s)\right)\right),\quad
\phi_2(s):=\varphi_2\left(\left(s,f(s)\right)\right),\quad s\in\R.
\enn
It is obvious that $\|\phi\|_{[L^p(\mathbb{R})]^2}\leq\|\varphi\|_{[L^p(\Gamma)]^2}<\infty.$
Using (\ref{eq2200}) and (\ref{eq2201}), we can deduce that for $x\in\Gamma_H$,
\ben
\left|u^s(x)\right|&\leq&\int_\G\left|\frac{\pa G^+_{k_+}(x,\xi)}{\pa\nu(\xi)}\right||\varphi_1(\xi)|ds(\xi)
+\int_\G\left|G^+_{k_+}(x,\xi)\right||\varphi_2(\xi)|ds(\xi) \\
&\leq& C\int_\G\left(1+|x_1-\xi_1|\right)^{-3/2}\left(\left|\varphi_1(\xi)\right|
+\left|\varphi_2(\xi)\right|\right)ds(\xi) \\
&=& C\int_{-\infty}^{+\infty}\left(1+|x_1-s|\right)^{-3/2}~\left(\left|\phi_1(s)\right|
+\left|\phi_2(s)\right| \right)\sqrt{1+f'(s)^2}ds
\enn
with the constant $C>0$ depending only on $k,\delta$.
Since $(1+|\cdot|)^{-3/2} \in L^1(\mathbb{R}),~ \phi_1+\phi_2\in L^p(\mathbb{R})$ and
$f \in B(c_1,c_2)$, we have by Young's inequality that
\ben
\|u^s\|_{L^p(\Gamma_H)}=\|u^s(\cdot,H)\|_{L^p(\mathbb{R})}
\leq\left\|1+\left|\cdot\right|\right\|_{L^1(\mathbb{R})}\|\phi_1+\phi_2\|_{L^p(\mathbb{R})}<\infty,
\enn
that is, $u^s\in L^p(\Gamma_H)$.

Furthermore, by (\ref{eq318}) we have that for $x\in\Gamma_H$,
\ben
\pa_{\nu}u^s(x)&=&\frac{\pa}{\pa\nu(x)}\left\{\int_{\G}\frac{\pa G^+_{k_+}(x,\xi)}{\pa\nu(\xi)}\varphi_1(\xi)ds(\xi)
+ \int_{\G} G^+_{k_+}(x,\xi)\varphi_2(\xi)ds(\xi)\right\} \\
&=&\int_{\G}\frac{\pa^2 G^+_{k_+}(x,\xi)}{\pa\nu(x)\pa\nu(\xi)}\varphi_1(\xi)ds(\xi)
+\int_{\G}\frac{\pa G^+_{k_+}(x,\xi)}{\pa\nu(x)}\varphi_2(\xi)ds(\xi).
\enn
Using this representation and the inequalities (\ref{eq2204}) and (\ref{eq2202}) and arguing similarly as above,
we obtain that $\pa_{\nu}u^s\in L^p(\G_H)$.

The results for the transmitted field $u^t$ can be shown similarly. The lemma is thus proved.
\end{proof}

\begin{corollary}\label{asym}
For $y\in D_+$ let $u^s(\cdot,y)$ and $u^t(\cdot,y)$ be the scattered and transmitted fields, respectively,
by the penetrable rough surface $\G$ and generated by the incident point source $u^i(x,y)=\varPhi_{k_+}(x,y)$
located at $y$, then for any $\vep_1>0$ and $\vep_2>{1}/{3}$,
$u^s(\cdot,y)\in L^{2+\vep_1}(\G_H)$ and $\pa_{\nu}u^s(\cdot,y)\in L^{2/3+\vep_2}(\G_H)$ for any $H>f_+$
and $u^t(\cdot,y)\in L^{2+\vep_1}(\G_h)$ and $\pa_{\nu}u^t(\cdot,y)\in L^{2/3+\vep_2}(\G_h)$ for any $h<f_-$.
\end{corollary}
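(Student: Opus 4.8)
The plan is to reduce everything to Lemma~\ref{lem2301} by reading off the boundary data that the incident point source produces, and then to spend the real effort on the sharper integrability of the normal derivatives. For the incident field $u^i(\cdot,y)=\varPhi_{k_+}(\cdot,y)$ the transmission conditions in (TSP) give the jump data $g_1=u^s-u^t=-\varPhi_{k_+}(\cdot,y)|_\G$ and $g_2=\pa_\nu u^s-\pa_\nu u^t=-\pa_{\nu}\varPhi_{k_+}(\cdot,y)|_\G$. Since $y\in D_+$ lies at a fixed positive distance from $\G$, there is no local singularity on $\G$, so the only issue is decay as $|x_1|\to\infty$. Because $f\in B(c_1,c_2)$ is bounded, $|x-y|$ is comparable to $1+|x_1|$ for $x=(x_1,f(x_1))\in\G$, and the bounds (\ref{eq311}) and (\ref{eq312}) give $|g_1(x)|\le C(1+|x_1|)^{-1/2}$ and $|g_2(x)|\le C(1+|x_1|)^{-1/2}$. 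Hence $g_1,g_2\in L^{q}(\G)$ for every $q>2$, i.e. $G=(g_1,g_2)^T\in[L^{2+\vep_1}(\G)]^2$ for every $\vep_1>0$.

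With this in hand the field statements are immediate: applying Lemma~\ref{lem2301} with $p=2+\vep_1$ yields $u^s(\cdot,y)\in L^{2+\vep_1}(\G_H)$ for every $H>f_+$ and $u^t(\cdot,y)\in L^{2+\vep_1}(\G_h)$ for every $h<f_-$. By Theorem~\ref{thm303} and Remark~\ref{wplp} the density $\varphi=(\varphi_1,\varphi_2)^T=A_T^{-1}G$ then also lies in $[L^{2+\vep_1}(\G)]^2$, a fact I will use for the derivatives.

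For the normal derivatives I would work from the representation (\ref{eq318}) differentiated once more, so that on the horizontal line $\G_H$ (where $\nu=(0,1)$ and $\pa_\nu=\pa_{x_2}$) one has $\pa_\nu u^s=\int_\G\pa_{x_2}\pa_{\nu(\xi)}G^+_{k_+}(x,\xi)\varphi_1\,ds+\int_\G\pa_{x_2}G^+_{k_+}(x,\xi)\varphi_2\,ds$. The target exponent $2/3+\vep_2$ is exactly what a convolution (Young) estimate predicts: by (\ref{eq2202}) and (\ref{eq2204}) both kernels are $\le C(1+|x_1-\xi_1|)^{-3/2}$, a function lying in $L^{r}(\R)$ for every $r>2/3$, while $\varphi\in[L^{s}(\G)]^2$ for every $s>2$. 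Pairing these through the Young relation $1+1/p=1/r+1/s$ and letting $r\downarrow 2/3$, $s\downarrow 2$ drives $1/p\uparrow 1$, i.e. $p\downarrow 1$, which gives $\pa_\nu u^s(\cdot,y)\in L^{2/3+\vep_2}(\G_H)$ for every $\vep_2>1/3$; the argument for $\pa_\nu u^t$ on $\G_h$ is identical.

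The main obstacle is that this last step is not a legitimate use of the classical Young inequality, which requires $r\ge1$, whereas here $r<1$; taking $r=1$ honestly only reproduces the weaker $\pa_\nu u^s\in L^{2+\vep}$. The genuine content is that the vertical derivative decays one full power faster than the field on $\G_H$ (like $(1+|x_1|)^{-3/2}$ rather than $(1+|x_1|)^{-1/2}$), a cancellation effect that the pointwise kernel bound does not see; indeed $\int_\G(1+|x_1-\xi_1|)^{-3/2}|\varphi(\xi)|\,ds$ by itself decays only like $(1+|x_1|)^{-1/2}$. I would therefore make the bound rigorous either through a weak-type (Lorentz) Young inequality, exploiting that $(1+|x_1-\xi_1|)^{-3/2}\in L^{2/3,\infty}(\R)$, or through a direct oscillatory-integral/angular-spectrum analysis in which $\pa_{x_2}$ multiplies the plane-wave spectrum by $\sqrt{k_+^2-\xi^2}$, a factor that vanishes at the grazing frequencies $\xi=\pm k_+$ and removes the square-root singularity responsible for the slow decay of $u^s$. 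Capturing this cancellation is the step I expect to be delicate.
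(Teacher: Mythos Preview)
Your treatment of $u^s(\cdot,y)\in L^{2+\vep_1}(\G_H)$ matches the paper exactly. The gap is in the normal derivatives, and you flag it yourself: the convolution estimate you write does not yield $L^{2/3+\vep_2}$, and neither proposed repair works. The weak-type/Lorentz Young idea fails because $(1+|t|)^{-3/2}\in L^1(\R)$, and convolution with an $L^1$ kernel cannot send $L^{2+\vep}$ into any $L^p$ with $p<2+\vep$; O'Neil's inequality, like Young's, requires the exponents to be at least $1$, so placing the kernel in $L^{2/3,\infty}$ buys nothing. The angular-spectrum route might in principle be made rigorous, but it is far more work than the statement warrants.

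The paper bypasses the difficulty with a simple reflection trick you missed. With $y'=(y_1,-y_2)$ one sets $\tilde u^i(x,y)=\varPhi_{k_+}(x,y)-\varPhi_{k_+}(x,y')$ and $\tilde u^s(x,y)=u^s(x,y)+\varPhi_{k_+}(x,y')$, so that $(\tilde u^s,u^t)$ solves (TSP) with incident field $\tilde u^i$. The point is that the \emph{difference} $\varPhi_{k_+}(x,y)-\varPhi_{k_+}(x,y')$ and its normal derivative decay like $(1+|x_1|)^{-3/2}$ along $\G$ (this is inequality~(\ref{eq2205}) together with (\ref{eq312})), so the new boundary data lie in $[L^{2/3+\vep_2}(\G)]^2$ for every $\vep_2>0$. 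Lemma~\ref{lem2301} then gives $\pa_\nu\tilde u^s(\cdot,y)\in L^{2/3+\vep_2}(\G_H)$ directly, and since $\pa_\nu\varPhi_{k_+}(\cdot,y')=\pa_{x_2}\varPhi_{k_+}(\cdot,y')$ on $\G_H$ is itself $O\big((1+|x_1|)^{-3/2}\big)$ by~(\ref{eq312}), one recovers $\pa_\nu u^s(\cdot,y)\in L^{2/3+\vep_2}(\G_H)$. The cancellation you were looking for is produced by subtracting the mirror source, not by a delicate kernel or oscillatory-integral analysis.
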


\begin{proof}
From (\ref{eq311}) and (\ref{eq312}) it follows that for any $\vep_1>0$,
\ben
\left(-\varPhi_{k_+}(\cdot,y),-\pa_{\nu}\varPhi_{k_+}(\cdot,y)\right)\in [L^{2+\vep_1}(\G)]^2,
\enn
so, by Lemma \ref{lem2301} we have \ben
u^s(\cdot,y)\in L^{2+\vep_1}(\G_H).
\enn
Now define
\be\label{equs}
\tilde{u}^i(x,y):=\varPhi_{k_+}(x,y)-\varPhi_{k_+}(x,y'),\quad
\tilde{u}^s(x,y):= u^s(x,y)+\varPhi_{k_+}(x,y').
\en
It is easy to see that $(\tilde{u}^s, u^t)$ is the solution to the problem (TSP) corresponding to
the incident wave $\tilde{u}^i$. From (\ref{eq312}) and (\ref{eq2205}) it follows that for any $\vep_2>0$,
\ben
\left(-\varPhi_{k_+}(\cdot,y)+\varPhi_{k_+}(\cdot,y'),
-\pa_{\nu}\varPhi_{k_+}(\cdot,y)+\pa_{\nu}\varPhi_{k_+}(\cdot,y')\right)
\in [L^{2/3+\vep_2}(\G)]^2.
\enn
Again, by Lemma \ref{lem2301} it is known that for any $\vep_2>1/3$,
\ben
\tilde{u}^s(\cdot,y)\in L^{2/3+\vep_2}(\G_H),\quad \pa_{\nu}\tilde{u}^s(\cdot,y)\in L^{2/3+\vep_2}(\G_H).
\enn
By this and (\ref{equs}), we have
\ben
\pa_{\nu}u^s(\cdot,y)=\pa_{\nu}\tilde{u}^s(\cdot,y)-\pa_{\nu}\varPhi_{k_+}(\cdot,y')\in L^{{2}/{3}+\vep_2}(\G_H).
\enn
The results for the transmitted field $u^t$ can be shown similarly.
This completes the proof.
\end{proof}

\begin{remark} \label{lemim} {\rm
With the help of Theorems \ref{thm301} and \ref{thm302} and Remark \ref{wplp},
Corollary \ref{asym} can also be extended to the cases of an impenetrable rough surface.
}
\end{remark}

The following lemma is similar to the Helmholtz-Kirchhoff identity which plays an important role
in the case of inverse scattering by bounded obstacles \cite[Lemma 3.1]{CCH13}.

\begin{lemma}\label{lem305}
For any $H\in\mathbb{R}$ we have
\ben
&&\int_{\G_H}\left(\frac{\pa\varPhi_k(x,y)}{\pa\nu(x)}\ol{\varPhi_k(x,z)}
-\varPhi_k(x,y)\frac{\pa\ol{\varPhi_k(x,z)}}{\pa\nu(x)}\right)ds(x)\\
&&\qquad=\frac{i}{4\pi}\int_{\Sp^1_+}e^{ik\hat x\cdot(z-y)}ds(\hat x),\quad y,z\in\R^2\ba\ol{U}_H,
\enn
where $\nu$ denotes the unit normal vector on $\G_H$ pointing into $U_H$.
\end{lemma}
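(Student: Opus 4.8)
The plan is to apply Green's second identity on the bounded part of the region below $\G_H$ and then let its radius tend to infinity. Fix $y,z\in\R^2\ba\ov{U}_H$ and set $u(x):=\varPhi_k(x,y)$ and $v(x):=\ov{\varPhi_k(x,z)}$; both solve $(\Delta+k^2)w=0$ away from their poles, and since complex conjugation commutes with the real operator $\Delta+k^2$ and with the Dirac mass, $v$ is an \emph{incoming} fundamental solution whose pole at $z$ carries the same logarithmic singularity $-\tfrac{1}{2\pi}\log|\cdot-z|$ that $u$ has at $y$. For large $R$ let $\Om_R:=B_R\cap(\R^2\ba\ov{U}_H)$ be the part of the lower region inside the disk of radius $R$ centred at the origin, with boundary $\pa\Om_R=\G_H^R\cup S_R$, where $\G_H^R:=\G_H\cap B_R$ and $S_R:=\pa B_R\cap(\R^2\ba\ov{U}_H)$. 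I would excise small disks $B_\vep(y),B_\vep(z)$, apply Green's second identity in the remaining region (where $u\Delta v-v\Delta u\equiv0$), and send $\vep\to0$. The two small-circle contributions are the standard fundamental-solution residues; tracking the outward normal of $\Om_R$ on $\pa B_\vep(y),\pa B_\vep(z)$ they combine to give the closed-contour identity
\[
\int_{\pa\Om_R}\left(\frac{\pa\varPhi_k(x,y)}{\pa\nu(x)}\ov{\varPhi_k(x,z)}-\varPhi_k(x,y)\frac{\pa\ov{\varPhi_k(x,z)}}{\pa\nu(x)}\right)ds(x)=\varPhi_k(y,z)-\ov{\varPhi_k(y,z)}=2i\,\mathrm{Im}\,\varPhi_k(y,z),
\]
with $\nu$ the outward unit normal of $\Om_R$; this is the analogue on $\Om_R$ of the Helmholtz-Kirchhoff identity for bounded domains. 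Pinning down the orientation (the outward normal of $\Om_R$ on $\G_H$ points upward, i.e. into $U_H$, matching the $\nu$ in the statement) and the two residue signs is the first thing to settle.

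Next I would split this contour integral into its part over $\G_H^R$ and its part over the arc $S_R$ and pass to $R\to\infty$. On $\G_H$ the normal derivative is $\pa_{x_2}$, and for $x=(x_1,H)$ one has $\pa_{x_2}\varPhi_k(x,y)=\tfrac{d}{dr}\varPhi_k\cdot(H-y_2)/|x-y|$ with $(H-y_2)/|x-y|=O(|x_1|^{-1})$, while $\varPhi_k(x,y)=O(|x_1|^{-1/2})$ by (\ref{eq001}) and (\ref{eq317}); hence the integrand decays like $|x_1|^{-2}$ along $\G_H$, the line integral converges absolutely, and $\int_{\G_H^R}\to\int_{\G_H}$, which is exactly the left-hand side of the lemma. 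On $S_R$ we have $\nu=\hat x$ and $\pa_\nu=\pa_r$; using (\ref{eq317}) together with $|x-y|=r-\hat x\cdot y+O(1/r)$, the source $u=\varPhi_k(\cdot,y)$ is outgoing with $\pa_r u=ik\,u+O(r^{-3/2})$ while $v=\ov{\varPhi_k(\cdot,z)}$ is incoming with $\pa_r v=-ik\,v+O(r^{-3/2})$, so the integrand equals $(\pa_r u)v-u(\pa_r v)=2ik\,uv+O(r^{-2})$. Writing $\varPhi_k(x,y)\sim\gamma\,r^{-1/2}e^{ikr}e^{-ik\hat x\cdot y}$ with $|\gamma|^2=1/(8\pi k)$ read off from (\ref{eq001}) and (\ref{eq317}), one gets $uv\sim|\gamma|^2 r^{-1}e^{ik\hat x\cdot(z-y)}$, so the integrand is $\sim\tfrac{i}{4\pi}r^{-1}e^{ik\hat x\cdot(z-y)}$. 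Since $ds=r\,d\theta$ and the arc fills out the lower unit semicircle $\Sp^1_-$ as $R\to\infty$ (the two caps near $(\pm1,0)$ shrink to measure zero), the arc contribution tends to $\tfrac{i}{4\pi}\int_{\Sp^1_-}e^{ik\hat x\cdot(z-y)}ds(\hat x)$.

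Finally I would identify the right-hand side using a Bessel-function identity. Since $\mathrm{Im}\,\varPhi_k=\tfrac14 J_0$ (from $H^{(1)}_0=J_0+iY_0$), the closed-contour value above equals $\tfrac{i}{2}J_0(k|y-z|)$; by the integral representation $J_0(w)=\tfrac{1}{2\pi}\int_0^{2\pi}e^{iw\cos\theta}d\theta$ (Jacobi-Anger), this is precisely $\tfrac{i}{4\pi}\int_{\Sp^1}e^{ik\hat x\cdot(z-y)}ds(\hat x)$. Subtracting the lower-arc contribution $\tfrac{i}{4\pi}\int_{\Sp^1_-}$ from $\tfrac{i}{4\pi}\int_{\Sp^1}$ leaves exactly $\tfrac{i}{4\pi}\int_{\Sp^1_+}e^{ik\hat x\cdot(z-y)}ds(\hat x)$, the asserted identity. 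The only genuinely analytic step, and the main obstacle, is the uniform control of the $O(r^{-2})$ far-field remainder on $S_R$ (legitimate because $y,z$ stay at a fixed finite distance from the arc); the remaining work is the careful bookkeeping of normals and residue signs.
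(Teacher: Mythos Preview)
Your argument is correct, but it takes a genuinely different route from the paper. The paper applies Green's second identity in the region \emph{above} $\G_H$, bounded by $\G_{H,R}$ and the upper half-circle $\pa B^+_R$ centred at $(0,H)$. Since $y,z\in\R^2\ba\ov{U}_H$ lie below $\G_H$, both $\varPhi_k(\cdot,y)$ and $\ov{\varPhi_k(\cdot,z)}$ are smooth Helmholtz solutions in this region, so the boundary integral vanishes identically; the integral over $\G_{H,R}$ then equals the upper-arc integral, which (via the Sommerfeld condition and the far-field asymptotics) converges directly to $\tfrac{i}{4\pi}\int_{\Sp^1_+}e^{ik\hat x\cdot(z-y)}ds(\hat x)$. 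No residues and no Bessel identity are needed.

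Your approach instead works in the lower region, where the two poles live: you pick up the residue $2i\,\mathrm{Im}\,\varPhi_k(y,z)=\tfrac{i}{2}J_0(k|y-z|)=\tfrac{i}{4\pi}\int_{\Sp^1}$, compute the lower-arc contribution $\tfrac{i}{4\pi}\int_{\Sp^1_-}$, and subtract. This is longer (it requires the residue bookkeeping and the Jacobi--Anger/Funk--Hecke identity, plus tracking the shrinking caps near $(\pm1,0)$), but it has the advantage of making the connection to the full-space Helmholtz--Kirchhoff identity $2i\,\mathrm{Im}\,\varPhi_k$ completely explicit, and your decay estimate $O(|x_1|^{-2})$ on $\G_H$ also justifies absolute convergence of the line integral, a point the paper leaves implicit. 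One small caveat: when $y=z$ only one disk is excised and the residue is $2i\,\mathrm{Im}\,\varPhi_k(y,y)$, which must be interpreted as the regular part; the formula $\tfrac{i}{2}J_0(0)$ still gives the right answer, so this is harmless.
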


\begin{proof}
Let $\pa B^+_R$ be the half circle above $\Gamma_H$ centered at $(0,H)$ and with radius $R>0$ and
define $\Gamma_{H,R}:=\{x\in\Gamma_H\;|\;|x_1|\leq R\}$.
Denote by $\Om$ the bounded region enclosed by $\Gamma_{H,R}$ and $\pa B^+_R$.
Using Green's theorem in $\Omega$, we obtain that
\begin{align}\label{eq100}
0&=\int_{\Omega}\left(\Delta\varPhi_k(x,y)+k^2\varPhi_k(x,y)\right)\ol{\varPhi_k(x,z)}dx \no\\
&=\int_{\Gamma_{H,R}}\left(-\frac{\pa\varPhi_k(x ,y)}{\pa\nu(x)}\ol{\varPhi_k(x,z)}
+\varPhi_k(x,y)\frac{\pa\ol{\varPhi_k(x,z)}}{\pa\nu(x)}\right)ds(x) \nonumber\\
&+\int_{\pa B^+_R}\left(\frac{\pa\varPhi_k(x ,y)}{\pa\nu(x)}\ol{\varPhi_k(x,z)}
-\varPhi_k(x,y)\frac{\pa\ol{\varPhi_k(x,z)}}{\pa\nu(x)}\right)ds(x).
\end{align}
From (\ref{eq001}) and (\ref{eq317}) we know that
\ben
\varPhi_k(x,y)=\frac{e^{ik|x|}}{\sqrt{|x|}}\frac{e^{i\pi/4}}{\sqrt{8\pi k}}e^{-ik\hat x\cdot y}
+O\left(\frac1{|x|}\right),\quad |x|\to\infty.
\enn
By this and the Sommerfeld radiation condition
\ben
\frac{\pa\varPhi_k(x,y)}{\pa\nu(x)}-ik\varPhi_k(x,y)=O(|x|^{-3/2}),\quad |x|\to\infty,
\enn
we obtain on letting $R\to\infty$ in (\ref{eq100}) that
\ben
&&\int_{\Gamma_H}\left(\frac{\pa\varPhi_k(x ,y)}{\pa\nu(x)}\ol{\varPhi_k(x,z)}
-\varPhi_k(x, y)\frac{\pa\ol{\varPhi_k(x,z)}}{\pa\nu(x)}\right)ds(x)\\
&&=\lim_{R\to\infty}\int_{\pa B^+_R}\left(\frac{\pa\varPhi_k(x ,y)}{\pa\nu(x)}\ol{\varPhi_k(x,z)}
-\varPhi_k(x,y)\frac{\pa\ol{\varPhi_k(x,z)}}{\pa\nu(x)}\right)ds(x)\\
&&=\lim_{R\to\infty}2ik\int_{\pa B^+_R}\varPhi_k(x,y)\ol{\varPhi_k(x,z)} ds(x)\\
&&=\lim_{R\to\infty}\frac{i}{4\pi}\int_{\pa B^+_R}\frac{1}{|x|}e^{ik\hat{x}\cdot(z-y)}ds(x)\\
&&=\frac{i}{4\pi}\int_{\mathbb{S}^1_+}e^{ik\hat{x}\cdot(z-y)}ds(\hat{x}).
\enn
This completes the proof.
\end{proof}

We also need the reciprocity relation $u(x,y)=u(y,x)$ for an unbounded rough surface.
The reciprocity relation can be found in \cite[Chapter 3.3]{CK} for the case of bounded obstacles.
For a locally rough surface, since the scattered field $u^s$ satisfies the Sommerfeld radiation
condition \cite{BL11,W87}, the reciprocity relation can be proved similarly as for the case of
bounded obstacles.
For the case of a global unbounded rough surface, the reciprocity relation has been established
in \cite[Theorem3.14]{L03} by using the assumption that the scattered field generated by a point
source satisfies the Sommerfeld radiation condition. However, there is no rigorous proof for this
assumption in \cite{CRZ98,L03}. Here, we give a proof of the reciprocity relation for the case of
a globally rough surface without this assumption.

\begin{lemma}\label{lem306} (Reciprocity relation) Let $u$ denote the total field in $\R^2\ba\G$
generating by a penetrable rough surface $\G$ corresponding to the incident point source
$u^i=\varPhi_{k_+}(x,y),$ that is,
$$
u=\left\{\begin{aligned}
&u^i + u^s &\quad\text{in}\quad D_+,\\
&u^t &\quad \text{in}\quad D_-.
\end{aligned}
\right.
$$
Then $u(x,y)=u(y,x),~x,y\in\mathbb{R}^2\ba\Gamma$.
\end{lemma}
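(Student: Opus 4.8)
The plan is to establish the relation first for the scattered field and then to recover the full statement from the symmetry of the fundamental solution. I would take two source points $y,z\in D_+$ and observe that, because $\varPhi_{k_+}(z,y)=\varPhi_{k_+}(y,z)$, the desired identity $u(z,y)=u(y,z)$ is equivalent to the reciprocity of the scattered field $u^s(z,y)=u^s(y,z)$. The remaining mixed situation, in which one of the points lies in $D_-$, is handled by the same computation, now extracting the residue in $D_-$ and using that the transmission conditions force the total field and its normal derivative to be continuous across $\G$; I will concentrate on the case $y,z\in D_+$, which carries all the difficulty.

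The core is an application of Green's second identity to the two total fields $u(\cdot,y)$ and $u(\cdot,z)$. I would fix $h_1>f_+$ lying above both source points and $h_2<f_-$, and work in the truncated regions $D_+^R:=D_+\cap\{|x_1|<R,\ x_2<h_1\}$ and $D_-^R:=D_-\cap\{|x_1|<R,\ x_2>h_2\}$, excising small discs $B_\epsilon(y),B_\epsilon(z)$ around the sources. In $D_+^R$ both fields solve the Helmholtz equation with $k_+$ away from the sources, and in $D_-^R$ both solve it with $k_-$, so the area integrals vanish. Computing the residues on $\pa B_\epsilon(y)$ and $\pa B_\epsilon(z)$ and letting $\epsilon\to0$ produces exactly $u(z,y)-u(y,z)$; adding the (separately vanishing) identity for $D_-^R$ and using the transmission conditions to cancel the contributions on $\G_R$, I obtain
\[
u(z,y)-u(y,z)=\int_{\pa D_+^R\cup\,\pa D_-^R}\big(u(\cdot,y)\,\pa_\nu u(\cdot,z)-u(\cdot,z)\,\pa_\nu u(\cdot,y)\big)\,ds ,
\]
where only the horizontal pieces $\G_{h_1}$, $\G_{h_2}$ and the two vertical truncation segments $x_1=\pm R$ survive.

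It then remains to show that each of these contributions tends to zero. On $\G_{h_1}$ the function $u(\cdot,y)=\varPhi_{k_+}(\cdot,y)+u^s(\cdot,y)$ is, in $U_{h_1}$, a smooth radiating solution of the Helmholtz equation with wavenumber $k_+$, since its only singularity $y$ lies below $\G_{h_1}$; the same is true of $u(\cdot,z)$. Applying Green's identity in $U_{h_1}$ together with the Sommerfeld radiation condition there shows that the $\G_{h_1}$-integral of the antisymmetric combination vanishes, and symmetrically for $\G_{h_2}$ using the DPRC (\ref{eq207}). For the vertical segments I would use the representation (\ref{eq318}), the decay estimate (\ref{eq2200}) and the $L^p$-bounds of Lemma \ref{lem2301} and Corollary \ref{asym}: these make each field square-integrable over the horizontal strip $h_2<x_2<h_1$, so that $\liminf_{R\to\infty}\int_{h_2}^{h_1}|u(\pm R,x_2)|^2\,dx_2=0$, which kills the vertical contributions along a sequence $R_n\to\infty$. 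Since the left-hand side is independent of $R$, passing to this limit yields $u^s(z,y)=u^s(y,z)$ and hence the reciprocity relation.

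The hard part will be the treatment of the unbounded boundaries, which is exactly the point at which a globally rough surface differs from a bounded obstacle: there is no global Sommerfeld radiation condition, and the individual products such as $u^s(\cdot,y)\,\pa_\nu u^s(\cdot,z)$ decay only like $|x_1|^{-1}$ and are not integrable over the infinite lines $\G_{h_j}$. The argument must therefore exploit the cancellation present in the \emph{antisymmetric} combination, most transparently through the angular-spectrum (Fourier) form of the UPRC/DPRC, where the two terms cancel mode by mode; and it must extract a subsequence $R_n$ for the vertical sides from the global $L^p$-integrability of the fields rather than from any pointwise uniform decay. This is precisely the step where \cite{L03,CRZ98} relied on an unproven Sommerfeld condition for point sources, and replacing that hypothesis by the decay estimates of Lemma \ref{lem2301} and Corollary \ref{asym} is the crux of the proof.
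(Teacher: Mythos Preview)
Your skeleton matches the paper's proof: Green's second identity in a truncated strip, residues at the two source points, and the reduction to showing that the horizontal and vertical boundary contributions vanish. The essential mechanism you eventually land on for the horizontal pieces---the angular-spectrum (Fourier) form of UPRC/DPRC, in which the antisymmetric combination cancels mode by mode---is precisely what the paper does: it rewrites the $\G_b$-integral via Parseval, observes that $(\mathscr F\pa_\nu u)(\xi)=i\sqrt{k_+^2-\xi^2}\,(\mathscr F u)(\xi)$, and uses the evenness of $\sqrt{k_+^2-\xi^2}$ to kill the integrand identically.

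Two points of divergence are worth flagging. First, your main paragraph for the horizontal pieces still \emph{invokes} the Sommerfeld radiation condition in $U_{h_1}$ before you retract it in the final paragraph; since the whole point of the lemma is to avoid that unproven hypothesis, the proof should go straight to the Fourier computation rather than sketch the Sommerfeld route and then patch it. Second, for the vertical segments the paper does not use your $L^p$/subsequence idea but simply cites \cite[Theorem~5.1]{CRZ98} for the decay as $L\to\infty$. Your alternative is plausible in spirit, but as written it has a gap: the estimates of Lemma~\ref{lem2301} and Corollary~\ref{asym} are $L^p$-bounds on horizontal lines $\G_H$, not over the two-dimensional strip, and the total field contains the incident term $\varPhi_{k_+}(\cdot,y)$, which is not square-integrable over the strip (it decays only like $|x|^{-1/2}$); moreover you need control of $\nabla u$ on the vertical sides as well. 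If you want to make the subsequence argument rigorous you must split off the incident field and treat it pointwise, then upgrade the horizontal-line bounds to strip bounds for both $u^s$ and $\nabla u^s$---or, as the paper does, appeal directly to the decay result in \cite{CRZ98}.
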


\begin{proof}
For $z\in\R^2$, let $z=(z_1,z_2)$.
For $b,L,\vep>0$ define $D_{b,L,\vep}:=\{z\in\R^2\;|\;|z_2|< b,|z_1|<L,|z-x|>\vep,|z-y|>\vep\}$.
Choose $\vep$ sufficiently small and $b,L$ large enough such that
$\ol{B_{\vep}(x)}\subset D_{b,L,\vep},\ol{B_{\vep}(y)}\subset D_{b,L,\vep}$ and
$\ol{B_{\vep}(x)}\cap\ol{B_{\vep}(y)}=\emptyset$.
Then, since, by Theorem \ref{thm303}, $u(\cdot,y),~u(\cdot,x)\in C^2(D_{b,L,\vep})\cap C(\ol{D}_{b,L,\vep})$,
we can apply Green's second theorem in $D_{b,L,\varepsilon}$ to give that
\be\label{eq330}
0=\int_{\pa D_{b,L,\vep}}\left(\frac{\pa u(z,y)}{\pa\nu(z)}u(z,x)
-\frac{\pa u(z,x)}{\pa\nu(z)}u(z,y)\right)ds(z).
\en

For any $v(z)\in L^2(\Gamma_b)$ with $b\in\R$, let $\mathscr{F}v$ denote the Fourier transform of
$v$ with respect to $z_1$, that is,
\ben
(\mathscr{F}v)(\xi,z_2)\big|_{z_2=b}=\int^{+\infty}_{-\infty}e^{-i z_1\xi}v(z)d z_1.
\enn
Then, by \cite[(2.7)]{CZ98} UPRC can be rewritten in the angular spectrum representation
\be\label{eq331}
u(z)=\frac{1}{2\pi}\int_{-\infty}^{+\infty}e^{i(z_2-h)\sqrt{k_+^2-\xi^2}+iz_1\xi}\hat u_h(\xi)d\xi,\quad z_2>h,
\en
where $\hat{u}_h(\xi):=(\mathscr{F}u)(\xi,h)$.
It follows from the representation (\ref{eq331}) that
\be\label{fourier1}
(\mathscr{F}u)(\xi,z_2)\big|_{z_2=b}&=&e^{i(b-h)\sqrt{k_+^2-\xi^2}}\hat u_h(\xi),\quad b>h,\\
(\mathscr{F}\pa_{\nu}u)(\xi,z_2)\big|_{z_2=b}&=&\frac{\pa}{\pa z_2}(\mathscr{F}u)(\xi,z_2)\big|_{z_2=b}\no\\
\label{fourier2}
&=&i\sqrt{k_+^2-\xi^2}e^{i(b-h)\sqrt{k_+^2-\xi^2}}\hat u_h(\xi),\quad b>h,
\en
where $\nu$ denotes the unit normal vector on $\Gamma_b$ pointing into $U_b$.
Thus, by (\ref{fourier1}), (\ref{fourier2}) and Parseval's formula we have
\be\label{eq2}
&&\int_{\G_b}\left(\frac{\pa u(z,y)}{\pa\nu(z)}u(z,x)-\frac{\pa u(z,x)}{\pa\nu(z)}u(z,y)\right)ds(z)\no\\
&&=\frac{1}{2\pi}\int_{-\infty}^{+\infty}\left( (\mathscr{F}\pa_{\nu}u)(\xi,z_2;y)\big|_{z_2=b}\ol{(\mathscr{F}\ol{u})}(\xi,b;x)\right.\no\\
&&\qquad\qquad\qquad\qquad\left.-(\mathscr{F} u)(\xi,b;x)\ol{(\mathscr{F}
     \ol{\pa_{\nu}u})}(\xi,z_2;y)\big|_{z_2=b}\right)d\xi\no\\
&&=\frac{1}{2\pi}\int_{-\infty}^{+\infty}\left( {i\sqrt{k_+^2-\xi^2}}e^{i(b-h)\sqrt{k_+^2-\xi^2}}\hat u_h(\xi;y)
e^{-i(b-h)\sqrt{k_+^2-\xi^2}}\ol{\hat {\ol{u}}_h(\xi;x)}\right.\no\\
&&\qquad\qquad\qquad\qquad\left.-{i\sqrt{k_+^2-\xi^2}}e^{i(b-h)\sqrt{k_+^2-\xi^2}}\hat u_h(\xi;x)
e^{-i(b-h)\sqrt{k_+^2-\xi^2}}\ol{\hat {\ol{u}}_h(\xi;y)}\right)d\xi\no\\
&&=\frac{1}{2\pi}\int_{-\infty}^{+\infty} i\sqrt{k_+^2-\xi^2}
  \left((\mathscr{F}u)(\xi,h;y)\ol{(\mathscr{F}\ol{u})}(\xi,h;x)\right.\no\\
&&\qquad\qquad\qquad\qquad\qquad\left.-(\mathscr{F}u)(\xi,h;x)\ol{(\mathscr{F}\ol{u})}(\xi,h;y)\right)d\xi.
\en
In the above equation $x$ and $y$ are used to indicate the dependence on the locations of the incident point sources.
Since $\ol{(\mathscr{F}\ol{u})}(\xi)=(\mathscr{F}u)(-\xi)$, we get
\be\label{eq3}
&&\int_{-\infty}^{+\infty}i\sqrt{k_+^2-\xi^2}\left((\mathscr{F}u)(\xi,h;y)\ol{(\mathscr{F}\ol{u})}(\xi,h;x)
  -(\mathscr{F}u)(\xi,h;x)\ol{(\mathscr{F}\ol{u})}(\xi,h;y)\right)d\xi \no\\
=&&\int_{-\infty}^{+\infty}i\sqrt{k_+^2-\xi^2}\left((\mathscr{F}u)(\xi,h;y)(\mathscr{F}u)(-\xi,h;x)\right.\no\\
&&\qquad\qquad\qquad\qquad\qquad-\left.(\mathscr{F}u)(\xi,h;x)(\mathscr{F}u)(-\xi,h;y)\right)d\xi.
\en
Noting that $a(\xi)=i\sqrt{k_+^2-\xi^2}$ is an even function of $\xi$, we obtain that the right-hand side
of (\ref{eq3}) vanishes. This, together with (\ref{eq2}), implies that
\be\label{eq332}
\int_{\Gamma_b}\left(\frac{\pa u(z,y)}{\pa\nu(z)}u(z,x)
-\frac{\pa u(z,x)}{\pa\nu(z)}u(z,y)\right)ds(z)=0.
\en
Similarly, we can show that
\ben
\int_{\Gamma_{-b}}\left(\frac{\pa u(z,y)}{\pa\nu(z)}u(z,x)
-\frac{\pa u(z,x)}{\pa\nu(z)}u(z,y)\right)ds(z)=0.
\enn
Using \cite[Theorem 5.1]{CRZ98}, we derive that
\be\label{eq333}
\int_{\Gamma_{\pm L}}\left(\frac{\pa u(z,y)}{\pa\nu(z)}u(z,x)
-\frac{\pa u(z,x)}{\pa\nu(z)}u(z,y)\right)ds(z)=0,\qquad L\rightarrow\infty.
\en
Applying Green's second theorem to $u^s(z,y)$ and $u(z,x)$ in $B_{\vep}(y)$ gives
\be\label{eq335}
0=\int_{\pa B_{\varepsilon}(y)}\left(\frac{\pa u^s(z,y)}{\pa\nu(z)}u(z,x)
-\frac{\pa u(z,x)}{\pa\nu(z)}u^s(z,y)\right)ds(z).
\en
Then applying Green's second theorem to $u^s(z,x)$ and $u(z,y)$ in $B_{\vep}(x)$ gives
\be\label{eq336}
0=\int_{\pa B_{\vep}(x)}\left(\frac{\pa u(z,y)}{\pa\nu(z)}u^s(z,x)
-\frac{\pa u^s(z,x)}{\pa\nu(z)}u(z,y)\right)ds(z).
\en
Now subtracting (\ref{eq332})-(\ref{eq336}) into (\ref{eq330}) and using
Green's representation theorem \cite[(2.5)]{CK} yield
\ben
0&=&\int_{\pa B_{\vep}(y)}\left(\frac{\pa\varPhi_{k_+}(z,y)}{\pa\nu(z)}u(z,x)-
\frac{\pa u(z,x)}{\pa\nu(z)}\varPhi_{k_+}(z,y)\right) ds(z) \\
&&+ \int_{\pa B_{\vep}(x)}\left(\frac{\pa u(z,y)}{\pa\nu(z)}\varPhi_{k_+}(z,x)
-\frac{\pa\varPhi_{k_+}(z,x)}{\pa\nu(z)}u(z,y)\right)ds(z)
=u(y,x)-u(x,y).
\enn
The proof is thus complete.
\end{proof}

\begin{remark} {\rm
The reciprocity relation is also valid for the cases of impenetrable rough surfaces.
}
\end{remark}

\subsection{The penetrable rough surface}

We now prove the following theorem for the penetrable rough surface
which leads to the imaging function required for the imaging method.

\begin{theorem}\label{thm307}
Assume that $(u^s(x,y),u^t(x,y))$ is the solution to the problem (TSP) with the boundary data
$g(x,y)=(-\varPhi_{k_+}(x,y),-{\pa\varPhi_{k_+}(x,y)}/{\pa\nu(x)})^T$, $x\in\Gamma$.
For $z\in U^+_h\ba\ol{U}{}^+_H$ with $H>f_+$ and $h<f_-$ define
\be\no
&&U^s(y,z)=\int_{\G_H}\left(\frac{\pa u^s(x,y)}{\pa\nu(x)}\ol{\varPhi_{k_+}(x,z)}
-u^s(x,y)\frac{\pa\ol{\varPhi_{k_+}(x,z)}}{\pa\nu(x)}\right)ds(x)\\ \label{eq2318}
&&\qquad\qquad-\frac{i}{4\pi}\int_{\Sp^1_-}e^{ik_+\hat{x}\cdot(y'-z')}ds(\hat{x}),\; y\in D_+, \\ \label{eq2319}
&&U^t(y,z)=\int_{\G_h}\left(\frac{\pa u^t(x,y)}{\pa\nu(x)}\ol{\varPhi_{k_+}(x,z)}
-u^t(x,y)\frac{\pa\ol{\varPhi_{k_+}(x,z)}}{\pa\nu(x)}\right)ds(x),\; y\in D_-.\;\;
\en
Then $(U^s(y,z),U^t(y,z))$ is well-defined and solves the problem (TSP) with the boundary data
\ben
G(y,z)=\left(-\frac{i}{2}J_0(k_+|y-z|),-\frac{i}{2}\frac{\pa J_0(k_+|y-z|)}{\pa\nu(y)}\right)^T,\quad y\in\G.
\enn
\end{theorem}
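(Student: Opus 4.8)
The plan is to verify, one at a time, the four conditions that define a solution of (TSP) for the pair $(U^s,U^t)$, keeping $z$ as a fixed parameter and letting $y$ be the independent variable. Before that I would record that the two surface integrals converge: by Corollary \ref{asym} one has $u^s(\cdot,y)\in L^{2+\vep_1}(\G_H)$ and $\pa_\nu u^s(\cdot,y)\in L^{2/3+\vep_2}(\G_H)$, and likewise $u^t(\cdot,y),\pa_\nu u^t(\cdot,y)$ on $\G_h$; pairing these against the kernels $\ol{\varPhi_{k_+}(\cdot,z)}$ and $\pa_\nu\ol{\varPhi_{k_+}(\cdot,z)}$, whose decay is controlled by \eqref{eq311}--\eqref{eq312}, and applying H\"older's inequality shows the $\G_H$- and $\G_h$-integrals are finite, while the hemisphere integral is over a compact set.

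The second step is to obtain the Helmholtz equations, the radiation conditions, and the growth bounds, and here reciprocity (Lemma \ref{lem306}) is the engine. Writing $u^s(x,y)=u^s(y,x)$ and $u^t(x,y)=u^t(y,x)$ turns the source variable $y$ into a genuine field point, so that $U^s$ becomes a superposition over $x\in\G_H$ of the scattered fields $u^s(\cdot,x)$ and their source-derivatives, and $U^t$ a superposition over $x\in\G_h$ of the transmitted fields $u^t(\cdot,x)$. Each constituent solves \eqref{eq204} in $D_+$ (respectively the Helmholtz equation with $k_-$ in $D_-$), satisfies the UPRC \eqref{eq206} (respectively the DPRC \eqref{eq207}) and the bound \eqref{eq303} (respectively \eqref{eq306}); by the continuous dependence in Theorem \ref{thm303} and the convergence just established, these properties are inherited by the superpositions. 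It is precisely this reciprocity swap that installs the correct wavenumber $k_-$ for $U^t$ in $D_-$. The correction term is a plane-wave superposition $\int_{\Sp^1_-}e^{ik_+\hat x\cdot(y'-z')}\,ds(\hat x)$, hence an entire solution of \eqref{eq204} that is bounded and upward propagating in $D_+$, so it disturbs none of these properties.

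The third step, and the heart of the argument, is the transmission conditions. The superposition picture exhibits $U^s$ (without its correction) and $U^t$ as the scattered and transmitted fields generated by one common incident field on $\G$, namely $w(y):=\int_{\G_H}\big(\pa_{\nu(x)}\varPhi_{k_+}(y,x)\,\ol{\varPhi_{k_+}(x,z)}-\varPhi_{k_+}(y,x)\,\pa_{\nu(x)}\ol{\varPhi_{k_+}(x,z)}\big)\,ds(x)$. By the Helmholtz--Kirchhoff identity (Lemma \ref{lem305}), $w(y)=\frac{i}{4\pi}\int_{\Sp^1_+}e^{ik_+\hat x\cdot(z-y)}\,ds(\hat x)$ for $y$ below $\G_H$. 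The physical transmission relations $u^s-u^t=-u^i$ and $\pa_\nu u^s-\pa_\nu u^t=-\pa_\nu u^i$ with $u^i=w$ then give, on $\G$, that $U^s-U^t=-w+(\text{correction})$ together with its normal-derivative analogue. The reflection substitution $\hat x\mapsto(\hat x_1,-\hat x_2)$ rewrites the correction as $-\frac{i}{4\pi}\int_{\Sp^1_-}e^{ik_+\hat x\cdot(z-y)}\,ds(\hat x)$, the lower-hemisphere twin of $w$, so that $U^s-U^t=-\frac{i}{4\pi}\int_{\Sp^1}e^{ik_+\hat x\cdot(z-y)}\,ds(\hat x)$; the Bessel identity $\int_{\Sp^1}e^{ik\hat x\cdot v}\,ds(\hat x)=2\pi J_0(k|v|)$ collapses this to $-\tfrac{i}{2}J_0(k_+|y-z|)$, which is exactly the first component of $G(y,z)$, and the same computation with $\pa_{\nu(y)}$ produces the second.

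The step I expect to be the main obstacle is reconciling the two representations, since $U^s$ lives on the upper line $\G_H$ and $U^t$ on the lower line $\G_h$: one must show they pair as the scattered and transmitted parts of a single incident field $w$ on $\G$, even though the wavenumber mismatch between $\ol{\varPhi_{k_+}}$ and the $k_-$-field in $D_-$ forbids a direct Green's-identity reduction of the $\G_h$-integral. I would route this entirely through reciprocity and the continuity of the solution map of Theorem \ref{thm303}, which simultaneously supplies the justification for differentiating under the integral in the source parameter and for interchanging the solution operator with the superposition; the growth bounds \eqref{eq205} then follow termwise from \eqref{eq303} and \eqref{eq306}.
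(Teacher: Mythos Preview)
Your plan is essentially the paper's own route: well-definedness via Corollary~\ref{asym} and H\"older, reciprocity (Lemma~\ref{lem306}) to turn the field point into the running variable, superposition of TSP solutions, and Lemma~\ref{lem305} together with the Funk--Hecke/Bessel identity to evaluate the boundary data. The one substantive difference is how the superposition is made rigorous. The paper does not argue directly with the ``single incident field $w$'' picture; instead it works at the level of the boundary integral equation of Theorem~\ref{thm303}. Writing $u^s(y,x),u^t(y,x)$ via the layer potentials with density $\varphi_x=A_T^{-1}g(\cdot,x)$, the paper integrates the density itself over $x\in\G_H$ against the weights $\ol{\varPhi_{k_+}(x,z)}$ and $\pa_{\nu(x)}\ol{\varPhi_{k_+}(x,z)}$ to obtain a single density $\tilde{\tilde\varphi}_z$ satisfying $A_T\tilde{\tilde\varphi}_z=\tilde{\tilde g}(\cdot,z)$, and then \emph{both} components of the pair are defined as the layer potentials $(\mathcal{D}^\pm+\mathcal{S}^\pm)\tilde{\tilde\varphi}_z$ of that one density. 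This is what guarantees, without further work, that the scattered and transmitted superpositions match up on $\G$.

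This bears directly on the obstacle you flag. The paper's proof never actually integrates the transmitted field over $\G_h$: its $\tilde{\tilde u}^t$ is built from sources on $\G_H$ (via reciprocity, $u^t(y,x)$ with $y\in D_-$, $x\in\G_H$) and then identified with the layer potential of $\tilde{\tilde\varphi}_z$. Your proposed resolution---``route through reciprocity and the continuity of the solution map''---is the right toolkit, but by itself it will not pair a $\G_H$-superposition with a $\G_h$-superposition, because those correspond to genuinely different distributed sources and hence different incident fields. The clean fix is exactly the density argument: pass everything through $A_T^{-1}$ and the layer-potential representations \eqref{eq307}--\eqref{eq308}, so that $U^s$ and $U^t$ are by construction the two halves of one TSP solution. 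Two small corrections: the bounds you cite as \eqref{eq303} and \eqref{eq306} are for (DSP) and (ISP); for (TSP) the continuous dependence is in Theorem~\ref{thm303}. And the correction term $\int_{\Sp^1_-}e^{ik_+\hat x\cdot(y'-z')}\,ds(\hat x)$ is indeed a superposition of upward-going plane waves in $y$ (since $\hat x\in\Sp^1_-$ reflects to $\Sp^1_+$ under $y\mapsto y'$), so your claim about UPRC is correct, but it deserves the one-line reflection argument you give later.
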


\begin{proof}
We first prove that $(U^s(y,z),U^t(y,z))$ is well-defined.
By Corollary \ref{asym} it follows that for any $\vep_1>0,~\vep_2>{1}/{3}$,
\ben
u^s(\cdot,y)\in L^{2+\vep_1}(\G_H),\quad\pa_{\nu}u^s(\cdot,y)\in L^{2/3+\vep_2}(\G_H).
\enn
Further, by (\ref{eq311}) and (\ref{eq312}) we have
\ben
\varPhi_{k_+}(\cdot,z)\in L^{2+\vep_1}(\G_H),\quad\pa_{\nu}\varPhi_{k_+}(\cdot,z)\in L^{2/3+\vep_2}(\G_H).
\enn
Choose $\vep_1>0,~\vep_2>1/3$ such that $1/(2+\vep_1)+1/(\vep_2+2/3)=1.$ Then,
by H\"{o}lder's inequality it is obtained that
$\|u^s(\cdot,y)\pa_{\nu}\varPhi_{k_+}(\cdot,z)\|_{L^1(\G_H)}<\infty,$
$\|\pa_{\nu}u^s(\cdot,y)\varPhi_{k_+}(\cdot,z)\|_{L^1(\G_H)}<\infty.$
This implies that $U^s(y,z)$ is well-defined. It can be shown similarly that $U^t(y,z)$ is well-defined.

Since $(u^s(y,x),u^t(y,x))$ is a solution to the problem (TSP) with the boundary data
$g(y,x)=(-\varPhi_{k_+}(y,x),-{\pa\varPhi_{k_+}(y,x)}/{\pa\nu(y)})^T,$ $y\in\G$, Then, by Theorem \ref{thm303}
there exists $\varphi_x(y):=({\varphi_{1,x}(y)},\varphi_{2,x}(y))^T\in BC^{1,\al}(\G)\times BC^{0,\al}(\G)$
such that
\be\label{0us}
u^s(y,x)&=&(\mathcal{D}^+_{k_+,-h}{\varphi_{1,x}})(y)+(\mathcal{S}^+_{k_+,-h}
{\varphi_{2,x}})(y),\quad y\in D_+,\\ \label{0ut}
u^t(y,x)&=&(\mathcal{D}^-_{k_-,h}{\varphi_{1,x}})(y)+(\mathcal{S}^-_{k_-,h}
{\varphi_{2,x}})(y),\quad y\in D_-,
\en
where $\varphi_x(y)$ satisfies the integral equation $A_T\varphi_x=g(\cdot,x)$ with $A_T$ given in (\ref{eq310})
and the integral operator $A_T$ is bijective (and so boundedly invertible)
in $[BC(\G)]^2$. Here, we use the subscript $x$ to indicate the dependance on the point $x$.
Further, since the boundary data $g(y,x)$ is differentiable for $x\in\G_H$ and $y\in\G$,
we obtain that $\varphi_x$ is differentiable with respect to $x_2$ with
$\pa\varphi_x/\pa x_2= A_T^{-1}{\pa g(\cdot,x)}/{\pa x_2}$.
Now define $\tilde{\varphi}_{x}:=\pa\varphi_x/\pa x_2$, $\tilde{g}(\cdot,x):=\pa g(\cdot,x)/{\pa x_2}$
and
\be\label{1us}
\tilde{u}^s(y,x)&:=&(\mathcal{D}^+_{k_+,-h}{\tilde{\varphi}_{1,x}})(y)
+(\mathcal{S}^+_{k_+,-h}{\tilde{\varphi}_{2,x}})(y),\quad y\in D_+,\\ \label{1ut}
\tilde{u}^t(y,x)&:=&(\mathcal{D}^-_{k_-,h}{\tilde{\varphi}_{1,x}})(y)
+(\mathcal{S}^-_{k_-,h}{\tilde{\varphi}_{2,x}})(y),\quad y\in D_-.
\en
Noting that $A_T\tilde{\varphi_x}=\tilde{g}(\cdot,x)$, we know that
$(\tilde{u}^s(\cdot,x),\tilde{u}^t(\cdot,x))$ is a solution to the problem (TSP) with the boundary data
$\tilde{g}(\cdot,x)$. Since $\mathcal{D}^+_{k_+,-h}$ and $\mathcal{S}^+_{k_+,-h}$ in (\ref{1us})
are bounded linear operators (see \cite{CR96,NAC03,ZC03}), it follows that
\be\label{eq008}
\frac{\pa u^s(\cdot,x)}{\pa x_2}=\tilde{u}^s(\cdot,x).
\en
Similarly, it can be proved that
\be\label{eq005}
\frac{\pa u^t(\cdot,x)}{\pa x_2}= \tilde{u}^t(\cdot,x).
\en
By the above two equations and the reciprocity relation in Lemma \ref{lem306}, we know that
$({\pa u^s(\cdot,x)}/{\pa x_2},{\pa u^t(\cdot,x)}/{\pa x_2})$ is the solution to
the problem (TSP) with the boundary data ${\pa g(y,x)}/{\pa x_2}$, $y\in\G.$

Define
\ben
\tilde{\tilde{\varphi}}_{z}(y)&:=&\int_{\G_H}\left(\tilde{\varphi}_x(y)\ol{\varPhi_{k_+}(x,z)}
 -\varphi_x(y)\frac{\pa\ol{\varPhi_{k_+}(x,z)}}{\pa\nu(x)}\right)ds(x),\quad y\in\G,\\
\tilde{\tilde{g}}(y,z)&:=&\int_{\G_H}\left(\tilde{g}(y,x)\ol{\varPhi_{k_+}(x,z)}
  -g(y,x)\frac{\pa \ol{\varPhi_{k_+}(x,z)}}{\pa\nu(x)}\right)ds(x),\quad y\in\G,\\
\tilde{\tilde{u}}^s(y,z)&:=&\int_{\G_H}\left(\frac{\pa u^s(x,y)}{\pa x_2}\ol{\varPhi_{k_+}(x,z)}
-u^s(x,y)\frac{\pa\ol{\varPhi_{k_+}(x,z)}}{\pa\nu(x)}\right)ds(x),\quad y\in D_+,\\
\tilde{\tilde{u}}^t(y,z)&:=&\int_{\Gamma_H}\left(\frac{\pa u^t(x,y)}{\pa x_2}\ol{\varPhi_{k_+}(x,z)}
-u^t(x,y)\frac{\pa\ol{\varPhi_{k_+}(x,z)}}{\pa\nu(x)}\right)ds(x),\quad y\in D_-.
\enn
By (\ref{eq311}), (\ref{eq312}) and Remark \ref{wplp}, and since
\be\label{eq011}
A_T\tilde{\tilde{\varphi}}_z(y)=\tilde{\tilde{g}}(y,z),\quad y\in\G,
\en
we can show that $\tilde{\tilde{g}}(y,z)$ and $\tilde{\tilde{\varphi}}_{z}(\cdot)$ are well-defined.
Further, by (\ref{0us}) and (\ref{1us}) we know that
\be \label{eq012}
\tilde{\tilde{u}}^s(y,z)=(\mathcal{D}^+_{k_+,-h}\tilde{\tilde{\varphi}}_{1,z})(y)
+(\mathcal{S}^+_{k_+,-h}\tilde{\tilde{\varphi}}_{2,z})(y).
\en
Similarly, we can prove that
\be\label{eq013}
\tilde{\tilde{u}}^t(y,z)=(\mathcal{D}^-_{k_-,h}\tilde{\tilde{\varphi}}_{1,z})(y)
+(\mathcal{S}^-_{k_-,h}\tilde{\tilde{\varphi}}_{2,z})(y).
\en
From (\ref{eq011}), (\ref{eq012}) and (\ref{eq013}), we know that
$(\tilde{\tilde{u}}^s(y,z),\tilde{\tilde{u}}^t(y,z))$ is the solution to the problem (TSP)
with the boundary data $\tilde{\tilde{g}}(y,z)$, $y\in\G.$
Thus, $(U^s(y,z),U^t(y,z))$ is a solution to the problem (TSP) with the boundary data
$G(y,z)=(h(y,z),{\pa h(y,z)}/{\pa\nu(y)})^T$, where
\ben
h(y,z)&=&-\int_{\G_H}\left(\frac{\pa\varPhi_{k_+}(x,y)}{\pa\nu(x)}\ol{\varPhi_{k_+}(x,z)}
-\varPhi_{k_+}(x,y)\frac{\pa\ol{\varPhi_{k_+}(x,z)}}{\pa\nu(x)}\right)ds(x)\\
&&-\frac{i}{4\pi}\int_{\mathbb{S}^1_-}e^{ik_+\hat{x}\cdot(y'-z')}ds(\hat{x}),\quad y\in\G.
\enn
By using Lemma \ref{lem305} and the Funk-Hecke formula \cite{P10}, it follows that
\ben
h(y,z)=-\frac{i}{4\pi}\int_{\mathbb{S}^1_+}e^{ik_+\hat{x}\cdot(z-y)}ds(\hat{x})
-\frac{i}{4\pi}\int_{\mathbb{S}^1_-}e^{ik_+\hat{x}\cdot(y'-z')}ds(\hat{x})
=-\frac{i}{2}J_0(k_+|y-z|).
\enn
The proof is thus complete.
\end{proof}

\begin{remark}\label{rmbehavior} {\rm
From Theorems \ref{thm303} and \ref{thm307}, we know that
\be\label{eq313}
&&U^s(y,z)=(\mathcal{D}^+_{k_+,-h}{\psi_{1,z}})(y)
           +(\mathcal{S}^+_{k_+,-h}{\psi_{2,z}})(y),\quad x\in D_+,\\ \label{eq314}
&&U^t(y,z)=(\mathcal{D}^-_{k_-,h}{\psi_{1,z}})(y)
           +(\mathcal{S}^-_{k_-,h}{\psi_{2,z}})(y),\quad x\in D_-,
\en
where $\psi_z:=(\psi_{1,z},\psi_{2,z})^T$ is the unique solution to the integral equation $A_T\psi_z=G$ with
$$
G(y,z)=\left(-\frac{i}{2}J_0(k_+|y-z|),-\frac{i}{2}\frac{\pa J_0(k_+|y-z|)}{\pa\nu(y)}\right)^T
=:\left(g_{1,z}(y),g_{2,z}(y)\right)^T.
$$
Here, we use the subscript $z$ to indicate the dependence on the point $z$.
Further, since $A_T$ is bijective (and so boundedly invertible) in $[BC(\Gamma)]^2$, we have
\be\no
C_1\left(\|g_{1,z}\|_{\infty,\G}+\|g_{2,z}\|_{\infty,\G}\right)
&\le&\|{\psi}_{1,z}\|_{\infty,\G}+\|{\psi}_{2,z}\|_{\infty,\G}\\ \label{eq339}
&\le& C_2\left(\|g_{1,z}\|_{\infty,\G}+\|g_{2,z}\|_{\infty,\G}\right)
\en
for some positive constants $C_1,C_2$. Note that the Bessel functions $J_0$ and $J_1$ have
the following behavior \cite[Section 3.4]{CK} (see also Figure \ref{fig10}): For $n=0,1,2,...$
\ben
J_n(t)&=&\sum_{p=0}^\infty\frac{(-1)^p}{p!(n+p)!}\left(\frac{t}{2}\right)^{n+2p},\quad t\in\R\\
J_n(t)&=&\sqrt{\frac{2}{\pi t}}\cos\left(t-\frac{n\pi}{2}
-\frac{\pi}{4}\right)\left\{1+O\left(\frac{1}{t}\right)\right\},\quad t\rightarrow\infty
\enn
and $J'_0(t)=-J_1(t)$.
\begin{figure}[htbp]
  \centering
  \subfigure{\includegraphics[width=2in]{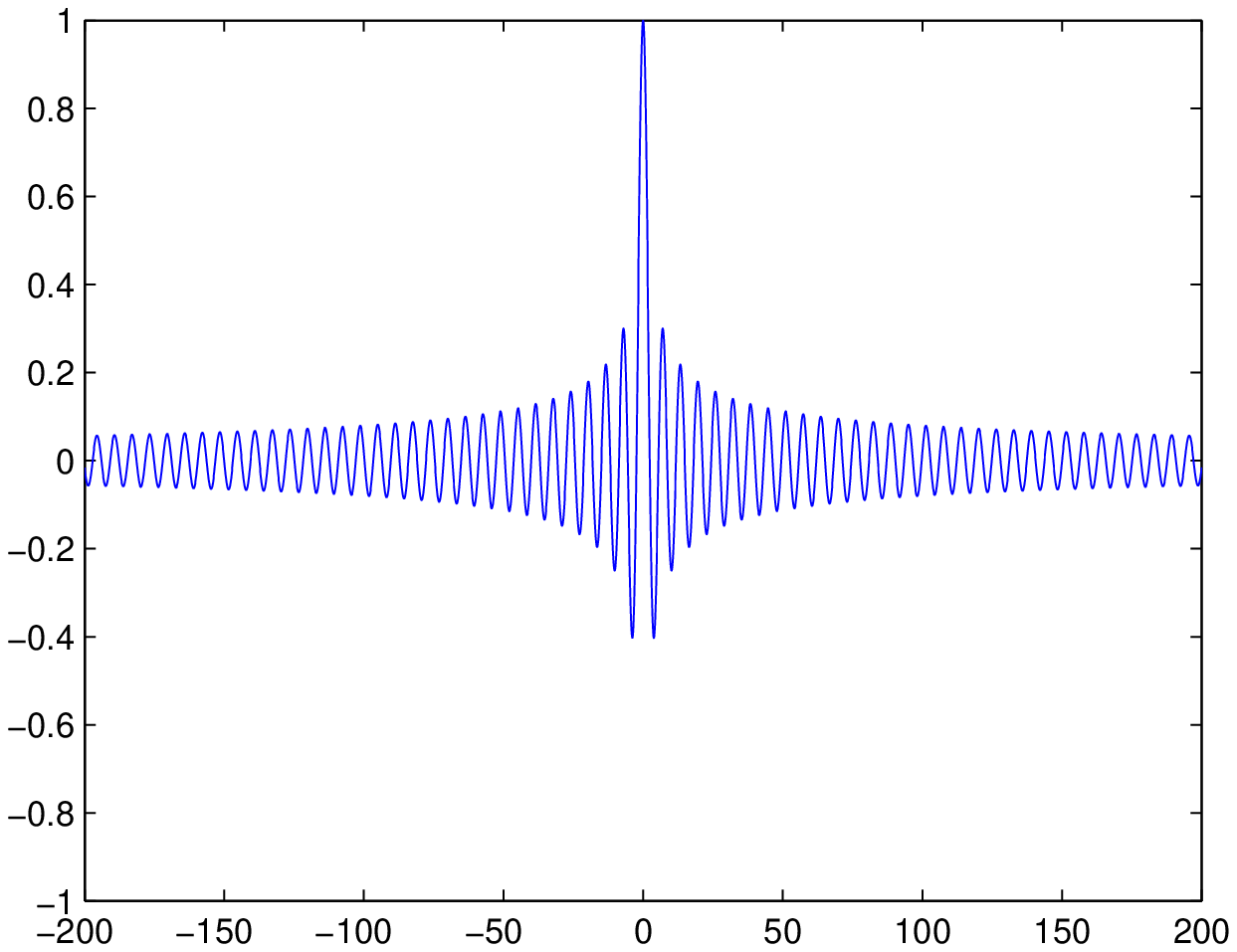}}
  \subfigure{\includegraphics[width=2in]{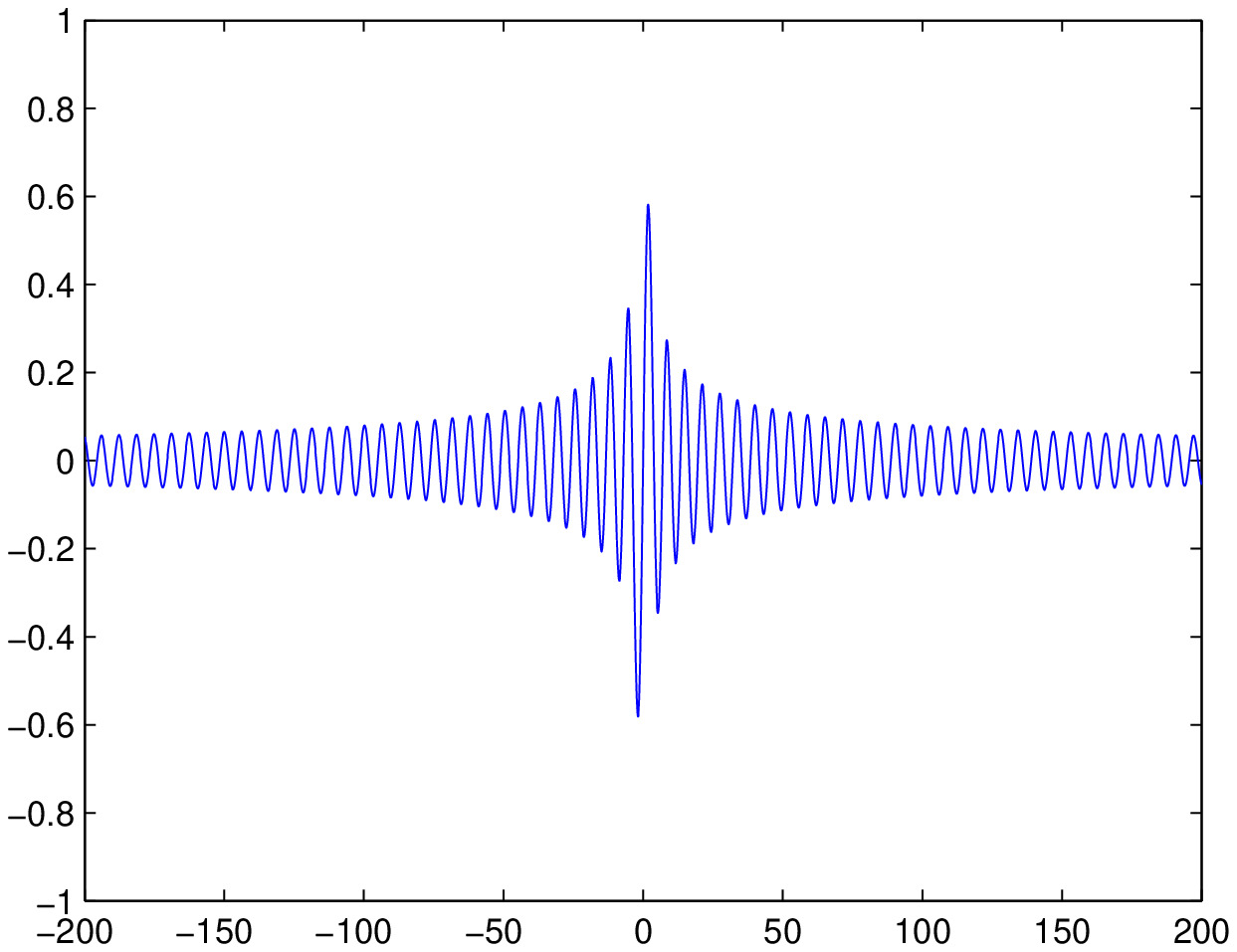}}
\caption{The behavior of the Bessel function $J_0$ (left) and $J_1$ (right).
}\label{fig10}
\end{figure}
Thus
\ben
g_{1,z}(y)&=&\left\{
\begin{array}{ll}
-{i}/{2} &\textrm{if}\;y=z,\\
O\left({|y-z|^{-1/2}}\right)&\textrm{if}\;|y-z|>>1, \end{array} \right.\\
g_{2,z}(y)&=&\left\{
\begin{array}{ll}
0 &\textrm{if}\;y=z,\\
O\left({|y-z|^{-1/2}}\right)&\textrm{if}\;|y-z|>>1.
\end{array} \right.
\enn
By (\ref{eq339}), we have
\ben
\left\{\begin{array}{ll}
\|{{\psi}_{1,z}}\|_{\infty,\Gamma}+\|{{\psi}_{2,z}}\|_{\infty,\Gamma}
\geq ({1}/{2})C_1 &\textrm{if}\;z\in\Gamma,\\
\|{{\psi}_{1,z}}\|_{\infty,\Gamma}+\|\psi_{2,z}\|_{\infty,\Gamma}
=O\left({d(z,\Gamma)^{-1/2}}\right)&\textrm{if}\;d(z,\Gamma)>>1. \end{array} \right.
\enn
From this and (\ref{eq313}), it is expected that the scattered field $U^s(y,z)$
takes a large value when $z\in\Gamma$ and decays as $z$ moves away from $\Gamma$.
}
\end{remark}

\subsection{The impenetrable rough surfaces}

In this subsection, we briefly present some results for the cases of an impenetrable rough surface
similar to Theorem \ref{thm307} which can be shown similarly.

\begin{theorem}\label{thm309}
Assume that $u^s(x,y),$ $x,y\in D_+,$ is the solution to the problem (DSP)
with the boundary data $g(x,y)=-\varPhi_{k_+}(x,y),x\in\G$.
For $z\in U^+_h\ba\ol{U}{}^+_H$ with $H>f_+$ and $h<f_-$ define
\be\no
&&U^s(y,z)=\int_{\G_H}\left(\frac{\pa u^s(x,y)}{\pa\nu(x)}\ol{\varPhi_{k_+}(x,z)}
-u^s(x,y)\frac{\pa\ol{\varPhi_{k_+}(x,z)}}{\pa\nu(x)}\right)ds(x)\\ \label{eq2320}
&&\qquad\qquad-\frac{i}{4\pi}\int_{\Sp^1_-}e^{ik_+\hat{x}\cdot(y'-z')}ds(\hat{x}),\quad y\in D_+
\en
which is independent of $h$. Then $U^s(y,z)$ is well-defined and solves the problem (DSP)
with the boundary data
\ben
G(y,z)=-({i}/{2})J_0(k_+|y-z|),\quad y\in\G.
\enn
\end{theorem}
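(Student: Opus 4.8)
The plan is to mirror the proof of Theorem \ref{thm307}, replacing the transmission representation by the Dirichlet double-layer representation of Theorem \ref{thm301} and discarding the transmitted field. First I would establish that $U^s(y,z)$ is well defined. By Corollary \ref{asym} together with Remark \ref{lemim}, for the Dirichlet problem one still has $u^s(\cdot,y)\in L^{2+\vep_1}(\G_H)$ and $\pa_\nu u^s(\cdot,y)\in L^{2/3+\vep_2}(\G_H)$ for suitable $\vep_1>0$, $\vep_2>1/3$, while the bounds (\ref{eq311}) and (\ref{eq312}) give $\varPhi_{k_+}(\cdot,z)\in L^{2+\vep_1}(\G_H)$ and $\pa_\nu\varPhi_{k_+}(\cdot,z)\in L^{2/3+\vep_2}(\G_H)$. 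Choosing the exponents conjugate and applying H\"older's inequality shows that the $\G_H$-integral in (\ref{eq2320}) converges absolutely, and the remaining term over $\Sp^1_-$ is a bounded integral. Since the defining formula involves only $\G_H$ and the reflected points $y',z'$, it carries no dependence on $h$, which merely restricts the admissible location of $z$; this gives the asserted independence of $h$.

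Next I would run the differentiation-in-$x_2$ argument. By Theorem \ref{thm301} the solution with source at $x$ evaluated at $y$ is $u^s(y,x)=(\mathcal{D}^+_{k_+,0}\varphi_x)(y)$, where $\varphi_x=A_D^{-1}g(\cdot,x)$ and $g(\cdot,x)=-\varPhi_{k_+}(\cdot,x)$. Because $A_D$ is a fixed, boundedly invertible operator on $BC(\G)$ independent of $x$, and $g(\cdot,x)$ is smooth in $x_2$ for $x\in\G_H$ (which lies strictly above $\G$), the density is differentiable with $\pa_{x_2}\varphi_x=A_D^{-1}\pa_{x_2}g(\cdot,x)$; boundedness of $\mathcal{D}^+_{k_+,0}$ then yields $\pa_{x_2}u^s(\cdot,x)=\mathcal{D}^+_{k_+,0}\pa_{x_2}\varphi_x$, so $\pa_{x_2}u^s(\cdot,x)$ solves (DSP) with data $\pa_{x_2}g(\cdot,x)$. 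Invoking the reciprocity relation of Lemma \ref{lem306}, which is valid for impenetrable surfaces by the remark following it, lets me interchange the roles of source and observation points exactly as in Theorem \ref{thm307}.

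I would then superpose over $\G_H$. Setting $\tilde{\tilde\varphi}_z(y)=\int_{\G_H}\bigl(\pa_{x_2}\varphi_x(y)\,\ol{\varPhi_{k_+}(x,z)}-\varphi_x(y)\,\pa_\nu\ol{\varPhi_{k_+}(x,z)}\bigr)ds(x)$ and the corresponding data $\tilde{\tilde g}(\cdot,z)$, and using that $A_D$ commutes with integration in $x$, one obtains $A_D\tilde{\tilde\varphi}_z=\tilde{\tilde g}(\cdot,z)$, so $\mathcal{D}^+_{k_+,0}\tilde{\tilde\varphi}_z$ solves (DSP) with data $\tilde{\tilde g}(\cdot,z)$. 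Since the outward normal on $\G_H$ points in the $+x_2$-direction, $\pa/\pa\nu(x)=\pa/\pa x_2$ there, and by boundedness of the potential this double-layer potential equals the $\G_H$-integral in (\ref{eq2320}). The subtracted $\Sp^1_-$ term is a bounded superposition of upward-propagating plane waves, hence itself a solution of the Helmholtz equation in $D_+$ satisfying the UPRC and the growth bound (\ref{eq205}); adding it therefore keeps $U^s(\cdot,z)$ in the (DSP) solution class and only shifts its Dirichlet trace. Thus $U^s(\cdot,z)$ solves (DSP) with data $G(y,z)=h(y,z)$, where $h(y,z)=-\int_{\G_H}\bigl(\pa_\nu\varPhi_{k_+}(x,y)\,\ol{\varPhi_{k_+}(x,z)}-\varPhi_{k_+}(x,y)\,\pa_\nu\ol{\varPhi_{k_+}(x,z)}\bigr)ds(x)-\frac{i}{4\pi}\int_{\Sp^1_-}e^{ik_+\hat x\cdot(y'-z')}ds(\hat x)$.

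Finally I would evaluate $h$. Applying Lemma \ref{lem305}, with $y,z$ lying below $\G_H$, converts the $\G_H$-integral into $-\frac{i}{4\pi}\int_{\Sp^1_+}e^{ik_+\hat x\cdot(z-y)}ds(\hat x)$; the substitution $\hat x\mapsto(\hat x_1,-\hat x_2)$ recasts the $\Sp^1_-$-integral as $-\frac{i}{4\pi}\int_{\Sp^1_+}e^{-ik_+\hat x\cdot(z-y)}ds(\hat x)$, and folding the upper half onto the full circle combines the two into $-\frac{i}{4\pi}\int_{\Sp^1}e^{ik_+\hat x\cdot(z-y)}ds(\hat x)$, which by the Funk--Hecke formula \cite{P10} equals $-\frac{i}{2}J_0(k_+|y-z|)$, as claimed. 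I expect the main obstacle to be the rigorous justification of differentiating $\varphi_x$ in $x_2$ and of interchanging this differentiation, and the subsequent $\G_H$-integration, with the operators $A_D^{-1}$ and $\mathcal{D}^+_{k_+,0}$; these steps rest on $A_D$ being a fixed bounded invertible operator independent of $x$ and $z$ and on the boundedness of $\mathcal{D}^+_{k_+,0}$, exactly as in the proof of Theorem \ref{thm307}. The well-definedness estimate and the closing Lemma \ref{lem305}/Funk--Hecke computation are then routine.
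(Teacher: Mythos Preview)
Your proposal is correct and follows essentially the same approach as the paper: the paper does not give a separate proof of Theorem \ref{thm309} but states that it ``can be shown similarly'' to Theorem \ref{thm307}, and your adaptation---replacing the transmission representation (\ref{eq307})--(\ref{eq308}) and operator $A_T$ by the Dirichlet double-layer representation (\ref{eq301}) and operator $A_D$, invoking Remark \ref{lemim} for the integrability estimates, and reusing Lemma \ref{lem305} and the Funk--Hecke identity for the boundary data computation---is exactly the intended specialization.
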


\begin{theorem}\label{thm310}
Assume that $u^s(x,y),$ $x,y\in D_+,$ is the solution to the problem (ISP) with the boundary data
$g(x,y)=-({\pa}/{\pa\nu(x)}-ik_+\rho(x))\varPhi_{k_+}(x,y),~x\in\G$.
For $z\in U^+_h\ba\ol{U}{}^+_H$ with $H>f_+$ and $h<f_-$ define
\be\no
&&U^s(y,z)=\int_{\G_H}\left(\frac{\pa u^s(x,y)}{\pa\nu(x)}\ol{\varPhi_{k_+}(x,z)}
-u^s(x,y)\frac{\pa\ol{\varPhi_{k_+}(x,z)}}{\pa\nu(x)}\right)ds(x)\\ \label{eq2321}
&&\qquad\qquad-\frac{i}{4\pi}\int_{\Sp^1_-}e^{ik_+\hat{x}\cdot(y'-z')}ds(\hat{x}),\quad y\in D_+
\en
which is independent of $h$. Then $U^s(y,z)$ is well-defined and solves the problem (ISP)
with the boundary data
\ben
G(y,z)=-\frac{i}{2}\left(\frac{\pa}{\pa\nu(y)}-ik_+\rho(y)\right)J_0(k_+|y-z|),\quad y\in\G.
\enn
\end{theorem}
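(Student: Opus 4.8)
The plan is to transcribe the argument of Theorem \ref{thm307} to the single impedance integral equation of Theorem \ref{thm302}, replacing the transmission system $A_T$ by the operator $A_I$. First I would verify that $U^s(y,z)$ in (\ref{eq2321}) is well-defined: by Corollary \ref{asym} together with Remark \ref{lemim}, $u^s(\cdot,y)\in L^{2+\vep_1}(\G_H)$ and $\pa_\nu u^s(\cdot,y)\in L^{2/3+\vep_2}(\G_H)$ for suitable $\vep_1>0$ and $\vep_2>1/3$, while the estimates (\ref{eq311})--(\ref{eq312}) give $\varPhi_{k_+}(\cdot,z)\in L^{2+\vep_1}(\G_H)$ and $\pa_\nu\varPhi_{k_+}(\cdot,z)\in L^{2/3+\vep_2}(\G_H)$. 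Choosing the two pairs of exponents to be conjugate and applying H\"older's inequality then makes both integrands in (\ref{eq2321}) lie in $L^1(\G_H)$. Since the defining integral does not involve $h$, the independence of $U^s$ of $h$ is immediate.

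Next, by Theorem \ref{thm302} the scattered field $u^s(\cdot,x)$ generated by a source at $x$ admits the single-layer representation $u^s(\cdot,x)=\mathcal{S}^+_{k_+,0}\varphi_x$, where $\varphi_x\in BC(\G)$ solves $A_I\varphi_x=g(\cdot,x)$ and $A_I$ is boundedly invertible. Because $g(y,x)=-(\pa/\pa\nu(y)-ik_+\rho(y))\varPhi_{k_+}(y,x)$ depends smoothly on the source coordinate $x_2$ for $x\in\G_H$, $y\in\G$, the density is differentiable with $\pa\varphi_x/\pa x_2=A_I^{-1}\pa g(\cdot,x)/\pa x_2$; setting $\tilde\varphi_x:=\pa\varphi_x/\pa x_2$ and $\tilde u^s:=\mathcal{S}^+_{k_+,0}\tilde\varphi_x$, linearity and boundedness of $\mathcal{S}^+_{k_+,0}$ give $\pa u^s(\cdot,x)/\pa x_2=\tilde u^s(\cdot,x)$, a solution of (ISP) with data $\pa g(\cdot,x)/\pa x_2$. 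Invoking the reciprocity relation (Lemma \ref{lem306} and the remark following it, valid in the impedance case) I would then identify $u^s(x,y)=u^s(y,x)$, so that, read as functions of $y$, both $u^s(x,y)$ and $\pa u^s(x,y)/\pa x_2$ solve (ISP) with data $g(y,x)$ and $\pa g(y,x)/\pa x_2$ respectively, recalling that $\pa/\pa\nu(x)=\pa/\pa x_2$ on the horizontal line $\G_H$.

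The superposition step then assembles $U^s(\cdot,z)$ as a $\G_H$-integral of these solutions. Defining $\tilde{\tilde\varphi}_z:=\int_{\G_H}(\tilde\varphi_x\,\ol{\varPhi_{k_+}(x,z)}-\varphi_x\,\pa_{\nu(x)}\ol{\varPhi_{k_+}(x,z)})\,ds(x)$, the same integrability estimates together with Remark \ref{wplp} show $\tilde{\tilde\varphi}_z\in BC(\G)$ and $A_I\tilde{\tilde\varphi}_z=\tilde{\tilde g}(\cdot,z)$, whence $U^s(\cdot,z)=\mathcal{S}^+_{k_+,0}\tilde{\tilde\varphi}_z$ solves (ISP) with boundary data $\tilde{\tilde g}(y,z)=\int_{\G_H}(\pa_{x_2}g(y,x)\ol{\varPhi_{k_+}(x,z)}-g(y,x)\pa_{\nu(x)}\ol{\varPhi_{k_+}(x,z)})\,ds(x)-\frac{i}{4\pi}\int_{\Sp^1_-}e^{ik_+\hat x\cdot(y'-z')}ds(\hat x)$. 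Finally I would commute the impedance operator $\pa/\pa\nu(y)-ik_+\rho(y)$, acting in $y$, past the $\G_H$-integration, acting in $x$, reducing the inner kernel to $\int_{\G_H}(\pa_{\nu(x)}\varPhi_{k_+}(x,y)\ol{\varPhi_{k_+}(x,z)}-\varPhi_{k_+}(x,y)\pa_{\nu(x)}\ol{\varPhi_{k_+}(x,z)})ds(x)$, apply Lemma \ref{lem305} to turn it into the half-circle integral over $\Sp^1_+$, and combine it with the $\Sp^1_-$ term by the Funk--Hecke formula \cite{P10} exactly as in Theorem \ref{thm307}. This yields $G(y,z)=-\frac{i}{2}(\pa/\pa\nu(y)-ik_+\rho(y))J_0(k_+|y-z|)$.

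I anticipate the main obstacle to be the rigorous justification of two interchanges: the differentiation of the density $\varphi_x$ with respect to the source coordinate $x_2$, and, above all, the commuting of the impedance boundary operator $\pa/\pa\nu(y)-ik_+\rho(y)$ with the $\G_H$-integration, i.e. differentiation under the integral sign at the boundary. For the single-layer representation this requires controlling the normal-derivative trace via the jump relations together with the decay estimates (\ref{eq311})--(\ref{eq312}) and (\ref{eq2200})--(\ref{eq2202}) to dominate the differentiated integrands uniformly. Once these interchanges are secured, the remainder is a direct adaptation of the transmission argument to the single equation $A_I\varphi=g$.
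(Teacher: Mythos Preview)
Your proposal is correct and follows precisely the route the paper intends: the authors state explicitly that Theorems~\ref{thm309} and~\ref{thm310} ``can be shown similarly'' to Theorem~\ref{thm307}, and your adaptation---replacing the transmission system $A_T$ and the combined layer potentials by the single impedance operator $A_I$ and the single-layer potential $\mathcal{S}^+_{k_+,0}$ from Theorem~\ref{thm302}, while keeping the well-definedness argument, the differentiation of the density in the source coordinate, reciprocity, superposition, and the Lemma~\ref{lem305}/Funk--Hecke computation unchanged---is exactly that similar proof. The technical caveats you flag (differentiation under the integral sign and commuting the impedance operator with the $\G_H$-integral) are the same ones implicitly absorbed in the proof of Theorem~\ref{thm307}.
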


\begin{remark} \label{im} {\rm
From Theorems \ref{thm301}, \ref{thm302}, \ref{thm309} and \ref{thm310}, and by a similar argument
as for the case of a penetrable rough surface, it can be obtained that the scattered field $U^s(y,z)$
for the case of an impenetrable rough surface has a similar behavior as for the case of a penetrable rough
surface given in Remark \ref{rmbehavior}, that is, it is expected that for any $y$ in each compact subset of $D_+$
the scattered field $U^s(y,z)$ for the case of an impenetrable rough surface takes a large value when $z\in\G$
and decays as $z$ moves away from $\G$.
}
\end{remark}

\subsection{The imaging function}

Motivated by the above discussion, we introduce the following imaging function
\be\no
&&I(z)=\int_{\G_H}\left|\int_{\G_H}\left(\frac{\pa u^s(x,y)}{\pa\nu(x)}\ol{\varPhi_{k_+}(x,z)}
-u^s(x,y)\frac{\pa \ol{\varPhi_{k_+}(x,z)}}{\pa \nu(x)}\right)ds(x)\right.\\ \label{eq18}
&&\qquad\qquad\left.-\frac{i}{4\pi}\int_{\Sp^1_-}e^{ik_+\hat{x}\cdot(y'-z')}ds(\hat{x})
\right|^2ds(y),
\en
where $u^s(x,y)$ is the scattered field to one of the three scattering problems mentioned above.
Using the asymptotic properties of $U^s(y,z)$ given in Remark \ref{rmbehavior} and \ref{im},
we can expect that the imaging function $I(z)$ takes a large value when $z\in\G$ and decays
as $z$ moves away from the rough surface $\G$.

In numerical computation, the infinite integration interval $\G_H$ in (\ref{eq18}) is truncated to be
${\G_{H,A}}:=\{x\in\Gamma_H~|~|x_1|<A\}$ which will be discretized uniformly into $2N$ subintervals so
the step size is $h=A/N.$ In addition, the lower-half circle $\Sp^1_-$ in the second integral
in (\ref{eq18}) will also be uniformly discretized into $M$ grids with the step size $\Delta\theta=\pi/M$.
Then for each sampling point $z$ we get the following discrete form of (\ref{eq18})
\be\no
&&I_A(z)=h\sum_{j=0}^{2N}\left|h\sum_{i=0}^{2N}
\left(\frac{\pa u^s(x_i,y_j)}{\pa\nu(x)}\ol{\varPhi_{k_+}(x_i,z)}
-u^s(x_i,y_j)\frac{\pa\ol{\varPhi_{k_+}(x_i,z)}}{\pa\nu(x)}\right)\right.\\ \label{eq188}
&&\qquad\qquad\qquad\qquad\left.-\frac{i\Delta\theta}{4\pi}
\sum_{k=0}^{M}e^{ik_+d_k\cdot(y_j'-z')}\right|^2.
\en
Here, the measurement points are denoted by $x_i=(-A+ih,H),$ $i=0,1,...,2N,$
the incident source positions are $y_j=(-A+jh,H),$ $j=0,1,...,2N,$
and $d_k=(\sin(-\pi+k\Delta\theta),\cos(-\pi+k\Delta\theta)),$ $k=0,1,...,M.$

The direct imaging method based on (\ref{eq188}) can be described in the following algorithm.

\begin{algorithm} Let $K$ be the sampling region which contains the part of
the rough surface that we want to recover.
\begin{enumerate}
\item Choose $\mathcal{T}_m$ to be a mesh of $K$
and $\Gamma_{H,A}$ to be a straight line segment above the rough surface.
\item Collect the Cauchy data $(u^s(x_i,y_j),{\pa u^s(x_i,y_j)}/{\pa\nu(x)})$ on the measurement points
      $x_i$, $i=0,...,2N$, on $\G_{H,A}$ corresponding to the incident point sources
      $u^i(x,y_j)=\varPhi_{k_+}(x,y_j),$ $j=0,...,2N$.
\item For each sampling point $z\in\mathcal{T}_m$, compute the approximate imaging function $I_A(z)$ in (\ref{eq188}).
\item Locate all those sampling points $z\in\mathcal{T}_m$ such that $I_A(z)$ takes a large value,
which represent the part of the rough surface on the sampling region $K$.
\end{enumerate}
\end{algorithm}

\section{Numerical examples}\label{sec4}
\setcounter{equation}{0}

In this section, we present several numerical experiments to demonstrate the effectiveness of
our imaging method and compare the reconstructed results by using different parameters.
To generate the synthetic data, we use the Nystr\"{o}m method to solve the forward scattering
problems for the case of global rough surfaces \cite{LSZR16,M01,ZC03}.
The noisy Cauchy data are generated as follows
\ben
u^s_{\delta}(x)&=&u^s(x)+\delta(\zeta_1+i\zeta_2)\max_x|u^s(x)|,\\
\pa_\nu u^s_{\delta}(x)&=&\pa_\nu u^s(x)+\delta(\zeta_1+i\zeta_2)\max_x|\pa_\nu u^s(x)|,
\enn
where $\delta$ is the noise ratio and $\zeta_1,\zeta_2$ are the standard normal distributions.
In all examples, we choose $N=100$ and $M=256$.

In each figure, we use a solid line to represent the actual
rough surface against the reconstructed rough surface.

{\bf Example 1.} In this example, $\G_1$ and $\G_2$ are two Dirichlet rough surfaces with
\ben
\Gamma_1:\;\;f_1(x_1)&=& 0.8+0.1\sin(2\pi x_1)+0.1\sin(\pi x_1),\\
\Gamma_2:\;\;f_2(x_1)&=& 0.8+0.025\sin(5\pi(x_1-1))+0.1\sin(0.5\pi(x_1-1)).
\enn
The Cauchy data are measured on $\Gamma_{H,A}$ with $H=1.5,~A=10$.
Fig. \ref{fig1} presents the reconstructed surfaces from noisy data with $20\%$ noise for
the wave numbers $k_+=10,20,30$, respectively.
From Fig. \ref{fig1} it can be seen that the macro-scale features of the rough surface are captured
with a smaller wave number $k_+=10$ and the whole rough surface is accurately recovered with a larger wave
number $k_+=30$.

\begin{figure}[htbp]
  \centering
  \subfigure[\textbf{$k_+=10$}]{\includegraphics[width=1.65in]{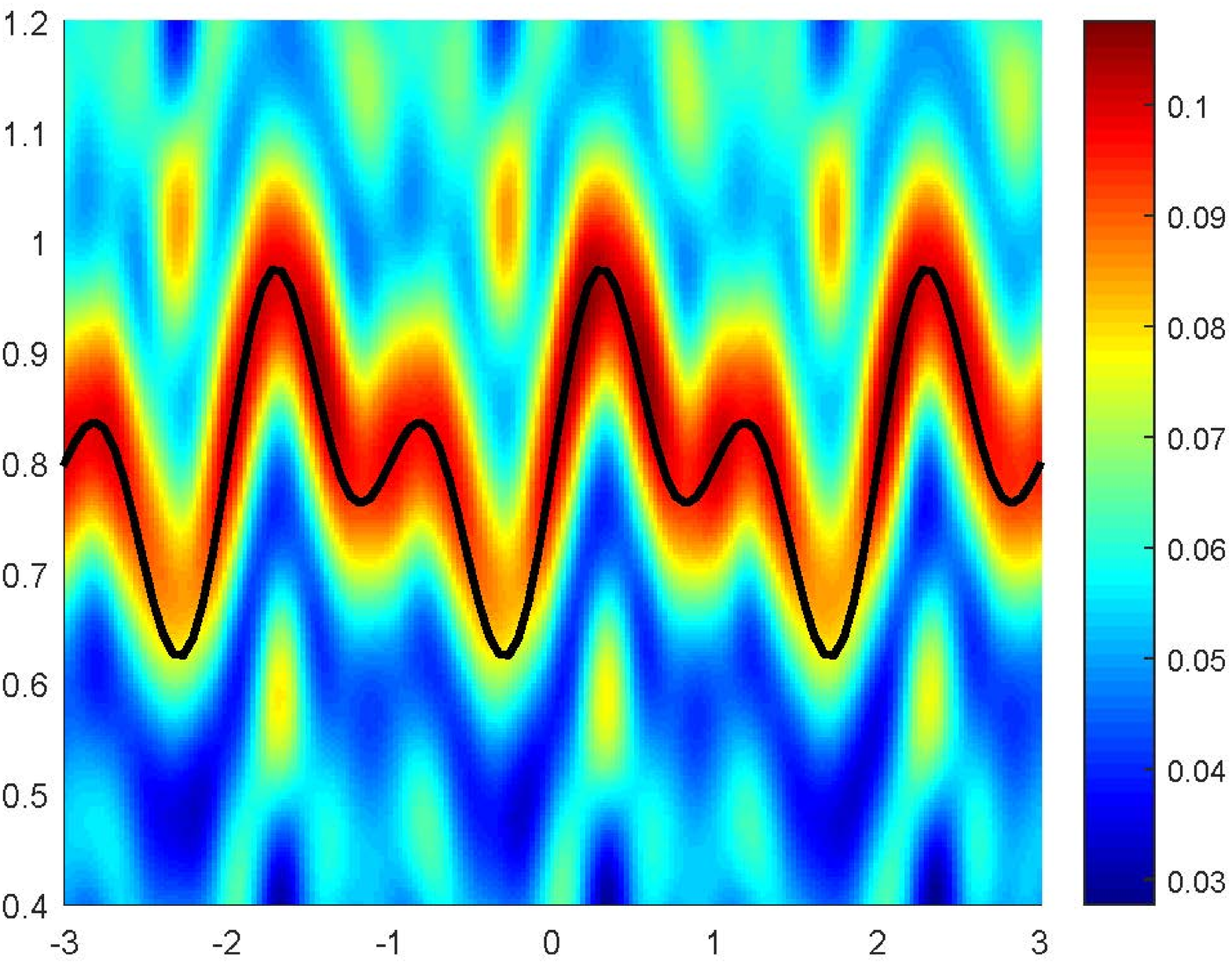}}
  \subfigure[\textbf{$k_+=20$}]{\includegraphics[width=1.65in]{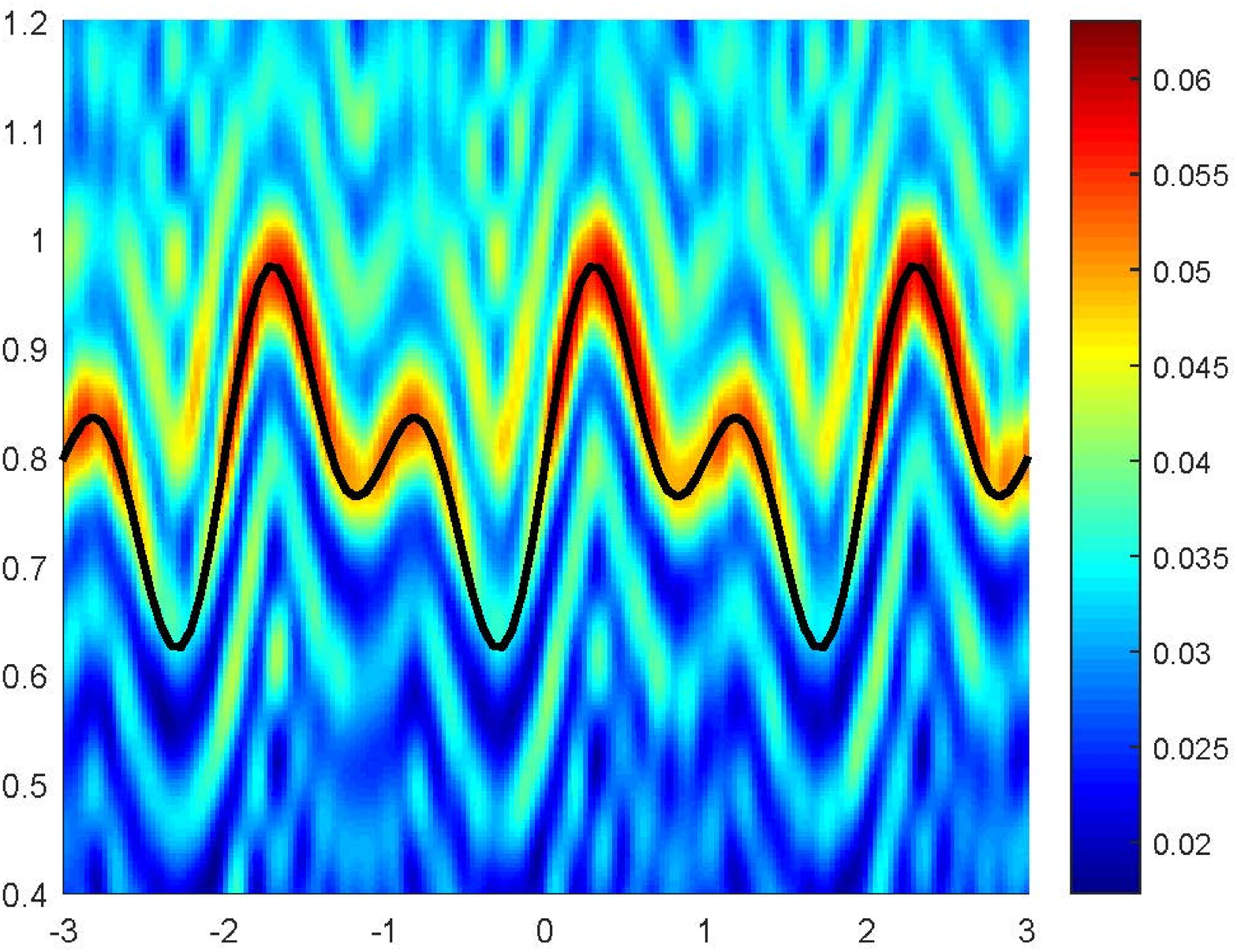}}
  \subfigure[\textbf{$k_+=30$}]{\includegraphics[width=1.65in]{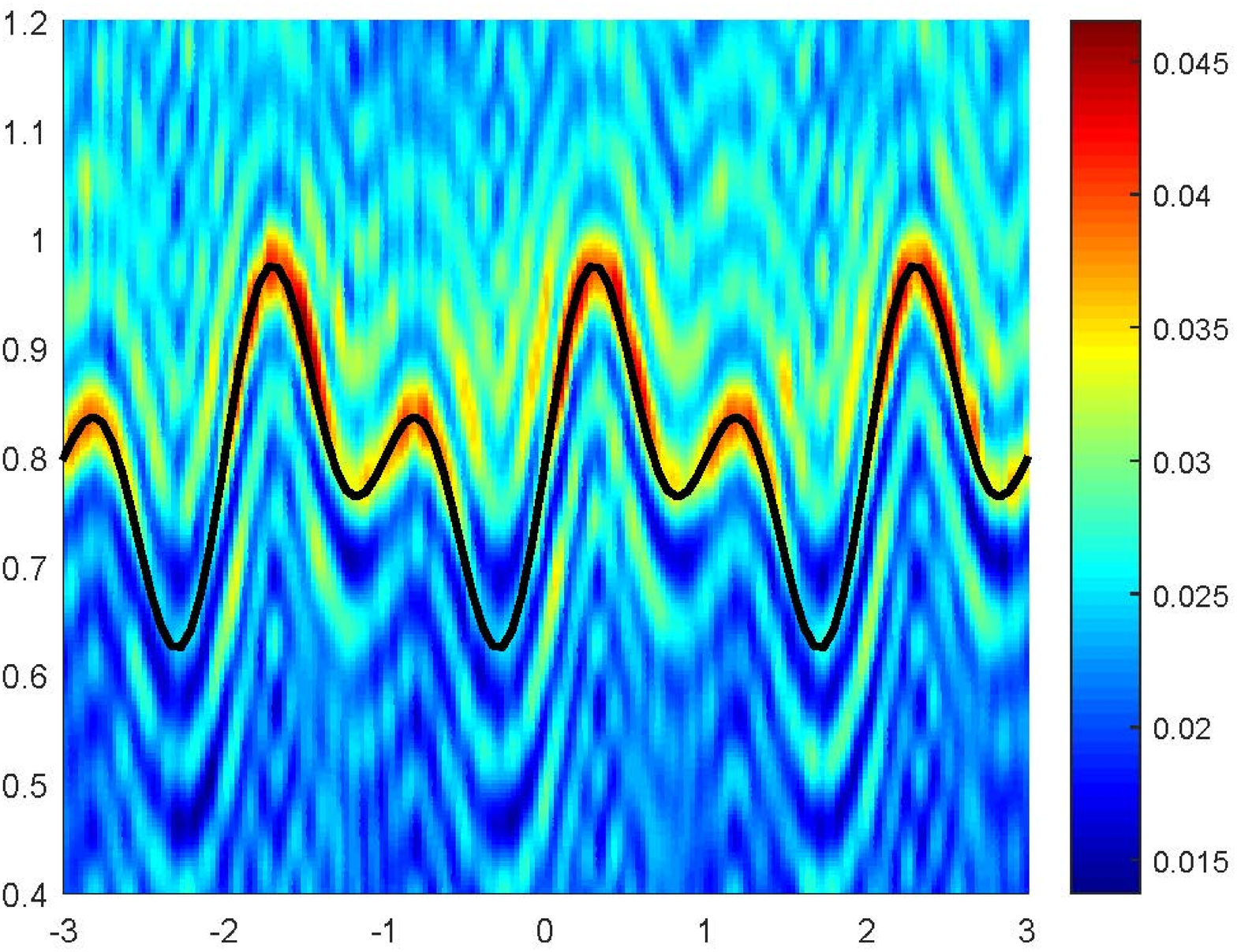}}
  \subfigure[\textbf{$k_+=10$}]{\includegraphics[width=1.65in]{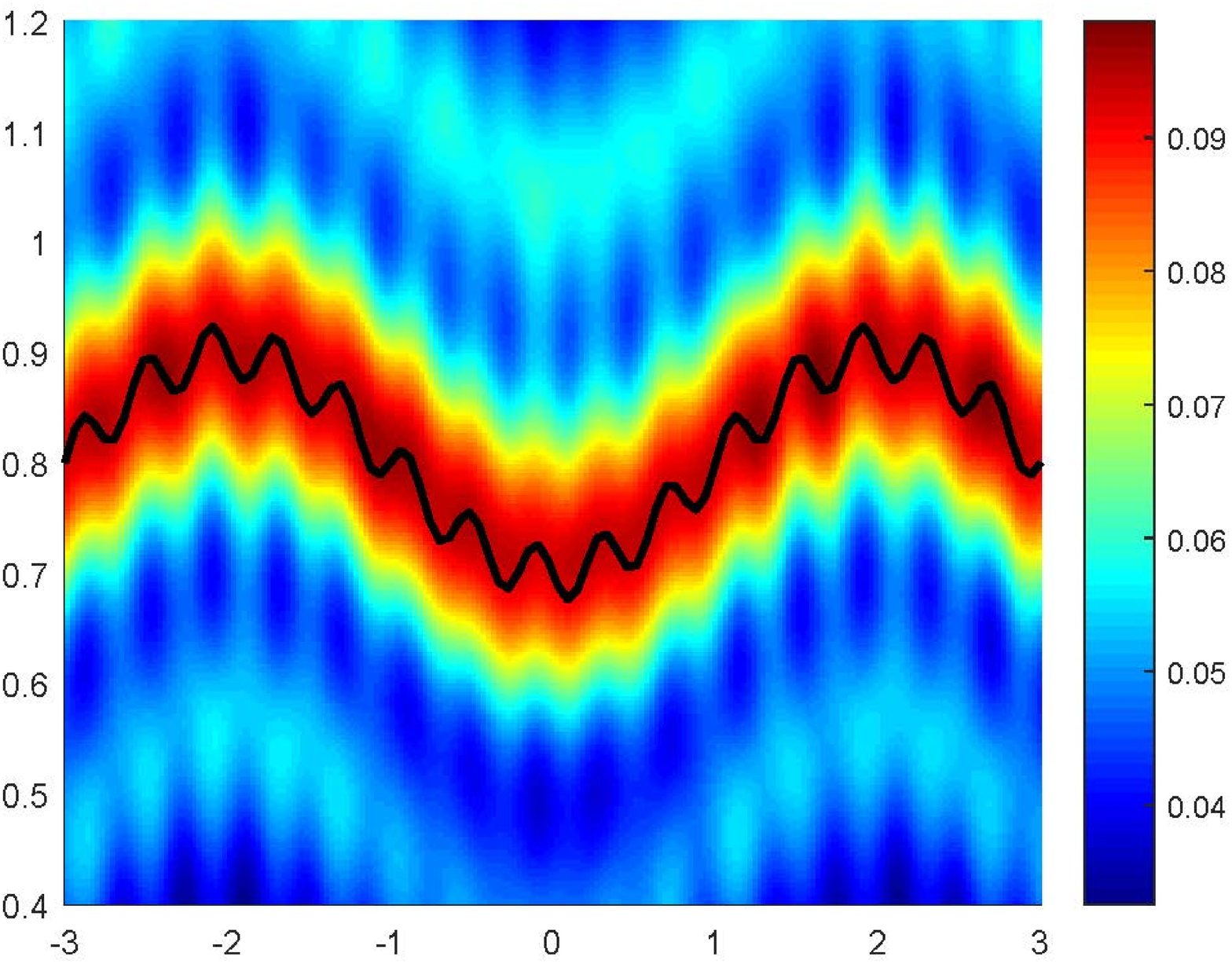}}
  \subfigure[\textbf{$k_+=20$}]{\includegraphics[width=1.65in]{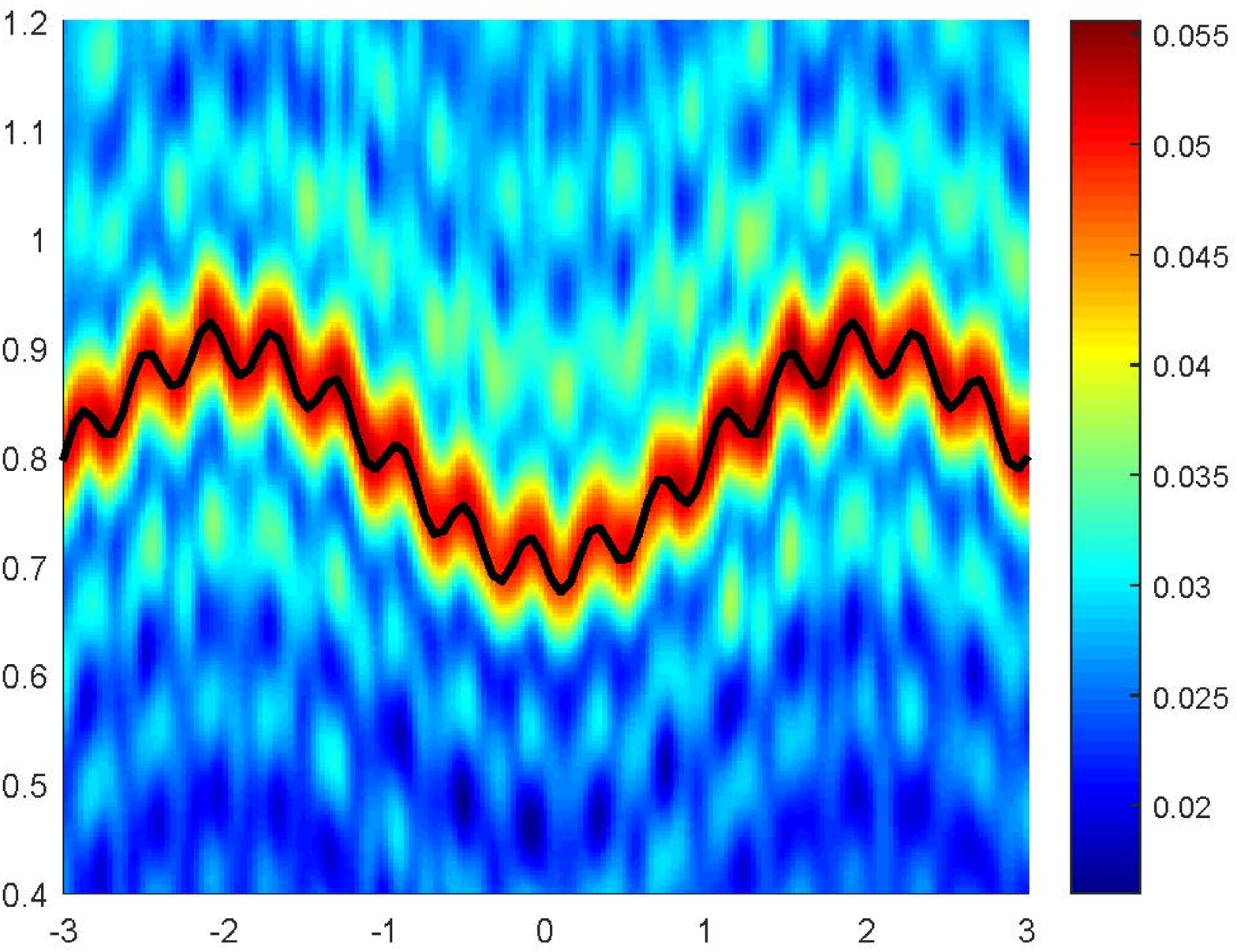}}
  \subfigure[\textbf{$k_+=30$}]{\includegraphics[width=1.65in]{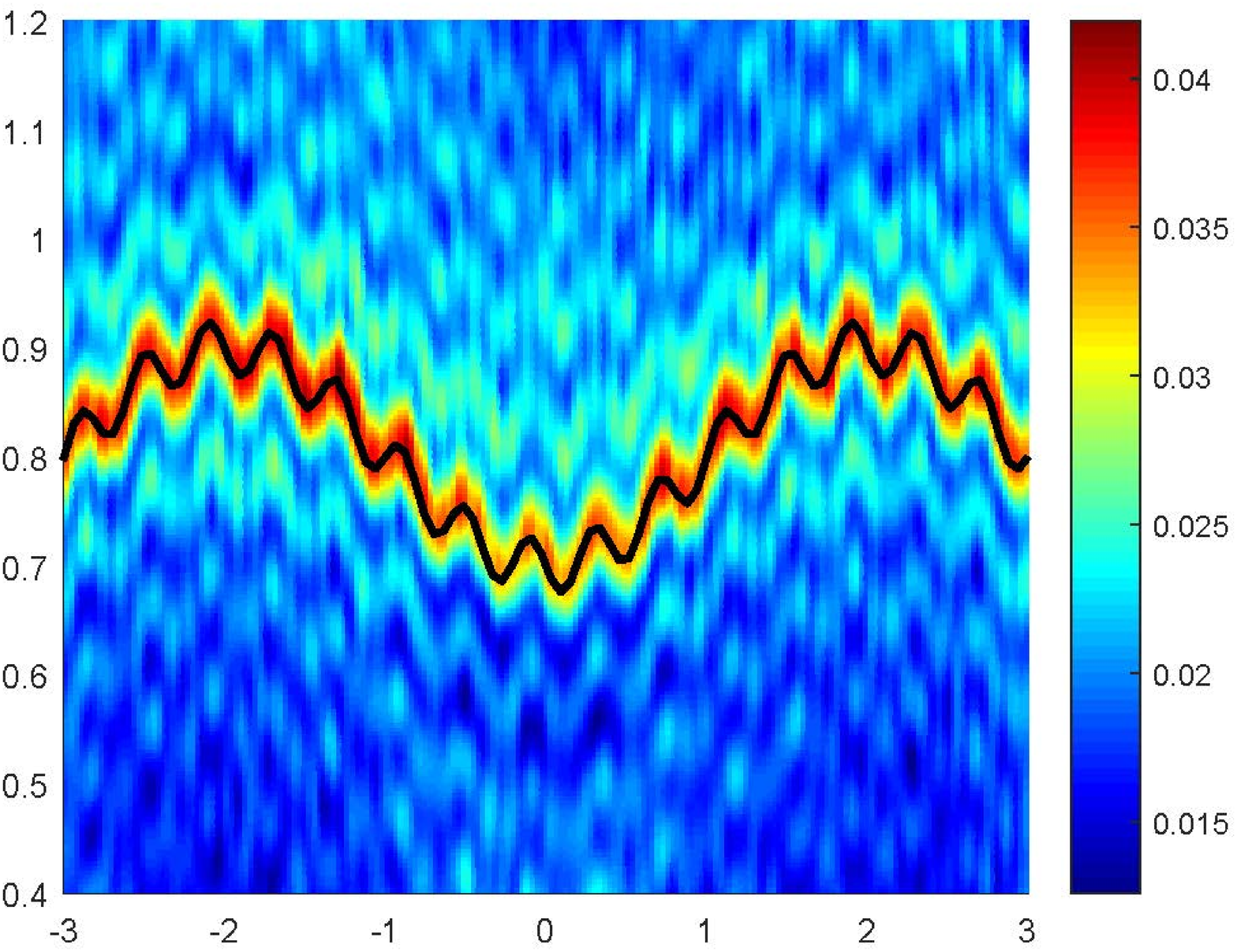}}
\caption{Reconstruction of Dirichlet rough surfaces at different wave numbers.
The top row is the reconstructed result of $\G_1$ and the bottom row is the reconstructed result of $\G_2$.
}\label{fig1}
\end{figure}

{\bf Example 2.} We now consider two impedance rough surfaces $\Gamma_3$ and $\Gamma_4$ with
\ben
\Gamma_3:\;\;f_3(x_1)&=& 0.8+0.16\sin(\pi x_1), \\
\Gamma_4:\;\;f_4(x_1)&=& 0.8+ 0.1e^{-25(0.3x_1-0.5)^2}+0.2e^{-49(0.3x_1+0.6)^2}-0.25e^{-8x_1^2}.
\enn
The wave number and the impedance function are set to be $k_+=15$ and $\rho(x_1,f(x_1))=5+\exp({2\pi x_1 i})$,
respectively. Fig. \ref{fig2} presents the reconstructed surfaces from noisy data with $20\%$ noise for
the cases when the Cauchy data are measured on $\Gamma_{H,A}$ with $H=1.5,~A=4$, $H=1.5,~A=10$ and $H=3,~A=10$,
respectively. The reconstruction results show that the reconstruction is getting better
if the measurement surface $\Gamma_{H,A}$ is getting closer to the rough surface
and is also getting longer.

\begin{figure}[htbp]
  \centering
  \subfigure[\textbf{$\{(x_1,1.5)|\;|x_1|\leq 4\}$}]{\includegraphics[width=1.65in]{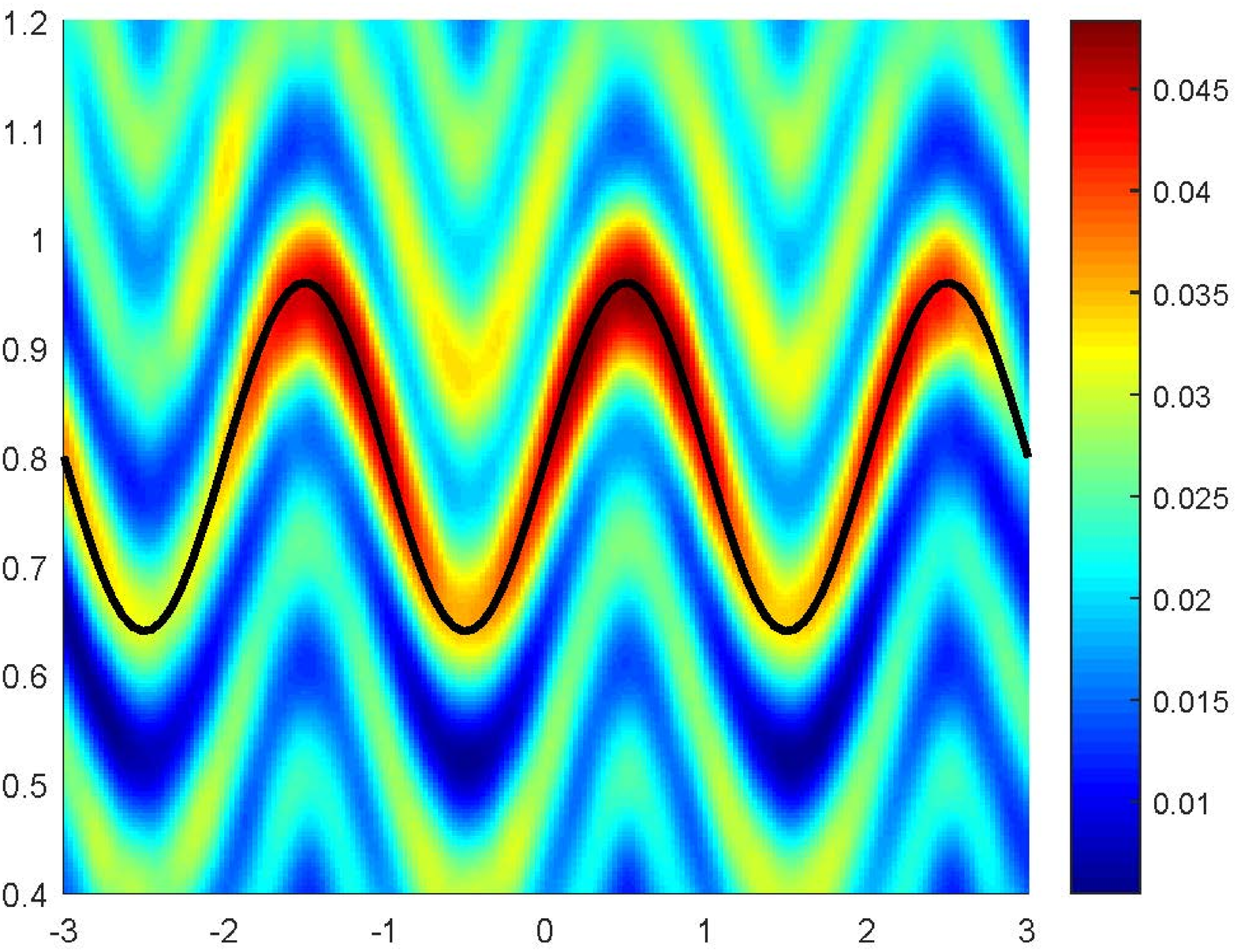}}
  \subfigure[\textbf{$\{(x_1,1.5)|\;|x_1|\leq 10\}$}]{\includegraphics[width=1.65in]{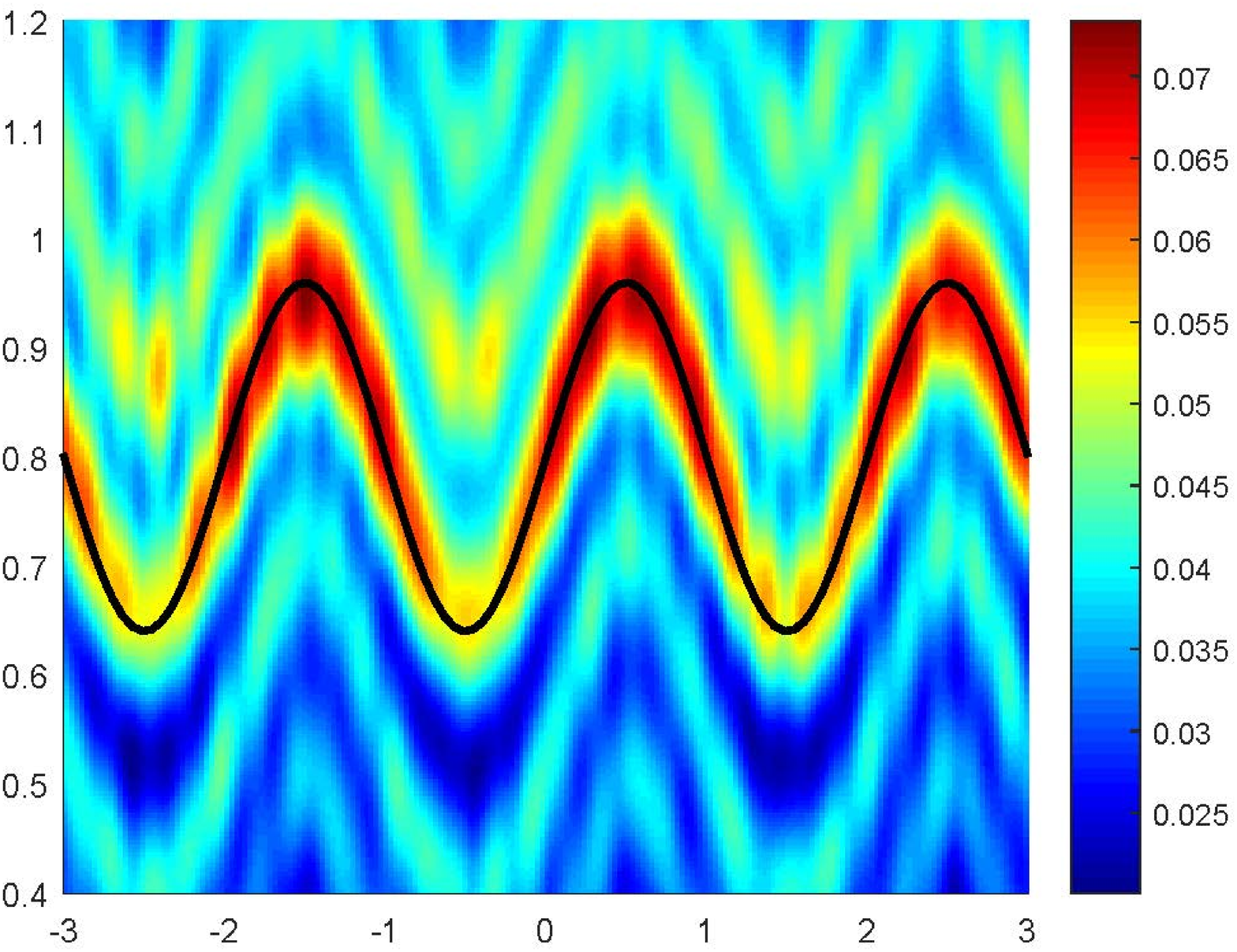}}
  \subfigure[\textbf{$\{(x_1,3)|\;|x_1|\leq 10\}$}]{\includegraphics[width=1.65in]{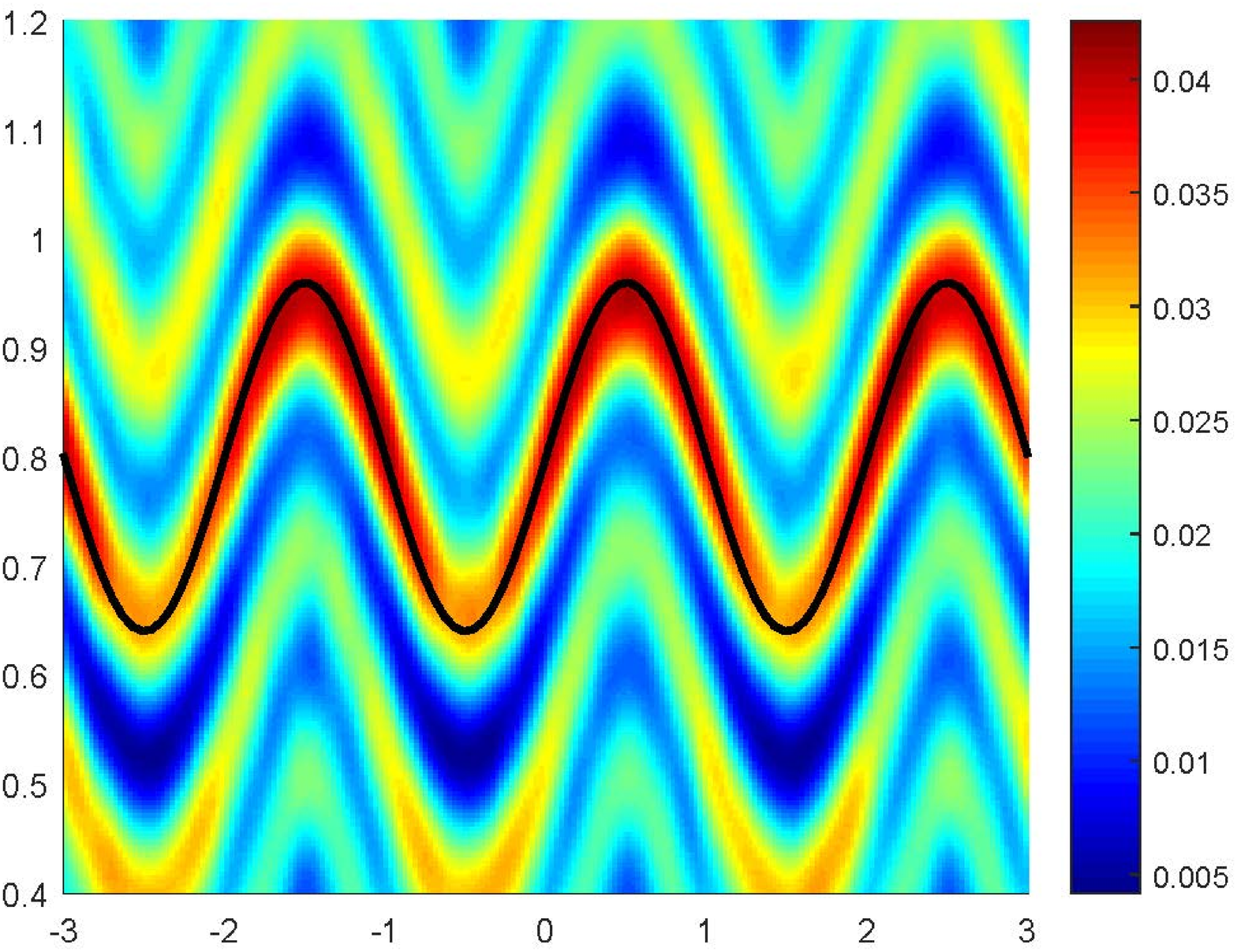}}
  \subfigure[\textbf{$\{(x_1,1.5)|\;|x_1|\leq 4\}$}]{\includegraphics[width=1.65in]{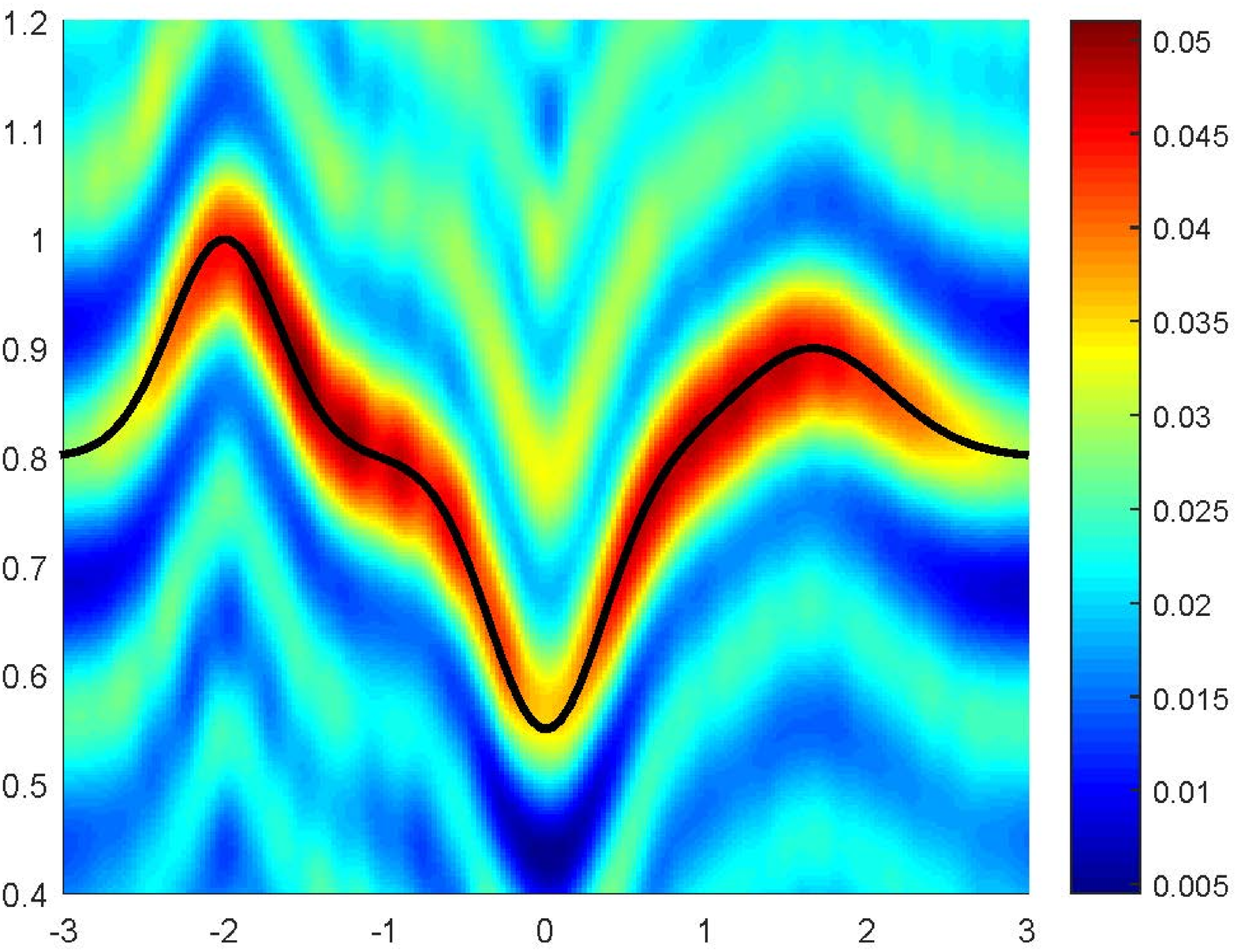}}
  \subfigure[\textbf{$\{(x_1,1.5)|\;|x_1|\leq 10\}$}]{\includegraphics[width=1.65in]{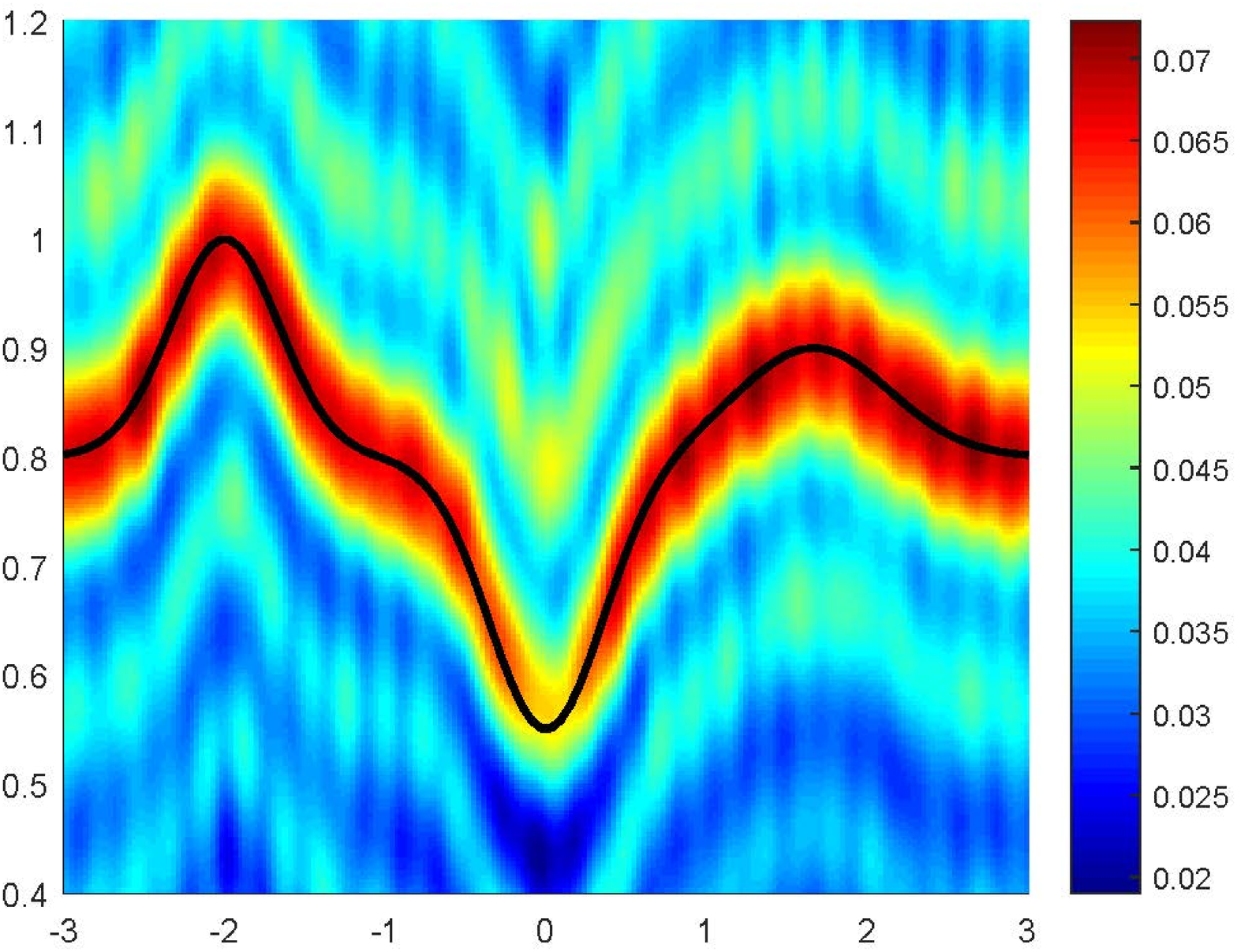}}
  \subfigure[\textbf{$\{(x_1,3)|\;|x_1|\leq 10\}$}]{\includegraphics[width=1.65in]{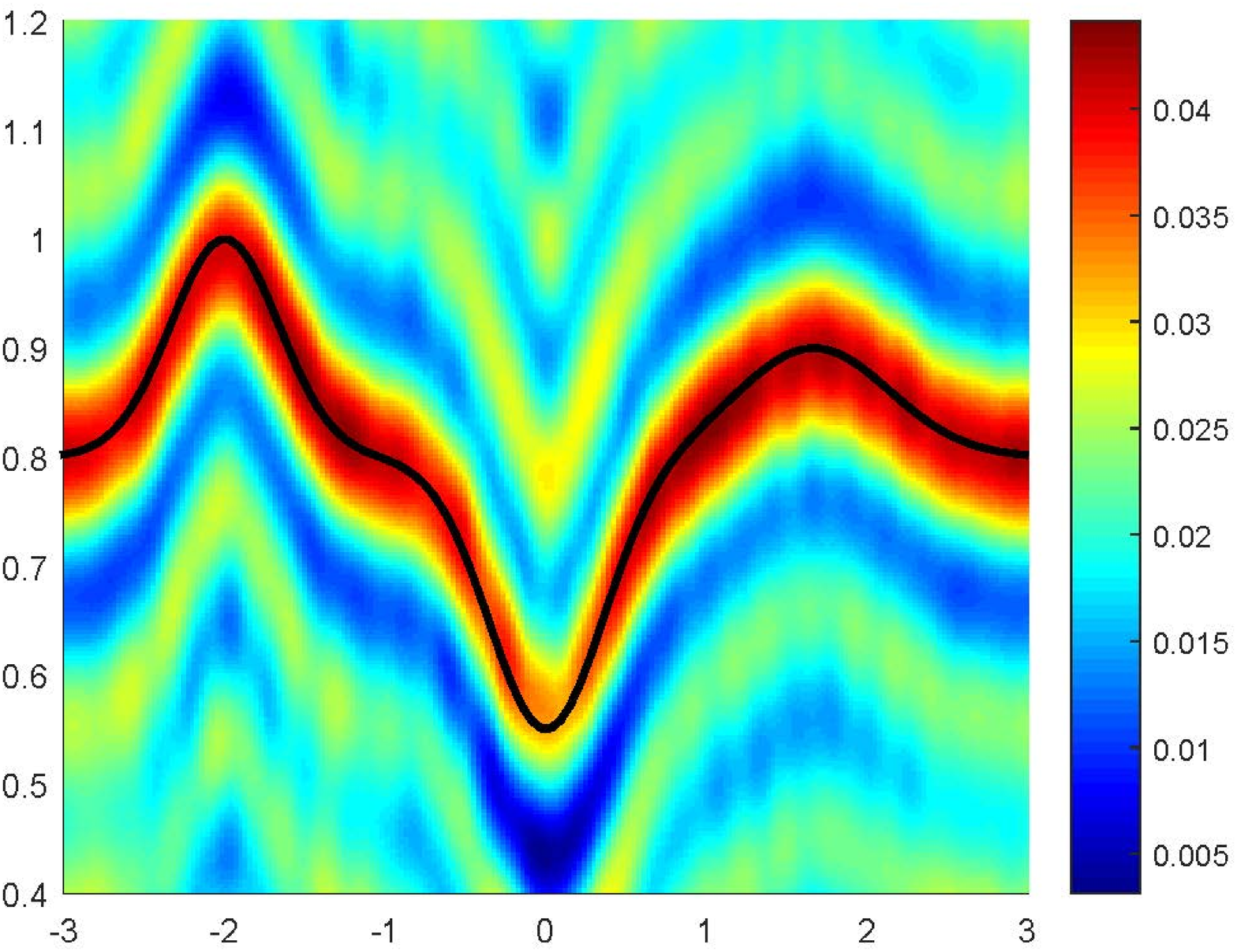}}
\caption{Reconstruction of an impedance rough surface with different measurement places and different
lengths of measurement line segments.
The top row is the reconstruction results of $\Gamma_3$
and the bottom row is the reconstructed results of $\Gamma_4$.
}\label{fig2}
\end{figure}

{\bf Example 3.} This example considers two penetrable rough surfaces $\Gamma_5$ and $\Gamma_6$:
\ben
\Gamma_5:\;\;f_5(x_1)&=& 0.8+0.3\sin(0.7\pi x_1)e^{-0.4x_1^2},\\
\Gamma_6:\;\;f_6(x_1)&=& 0.8+0.1\sin(0.4\pi x_1)e^{-\sin(1.2x_1^2)}.
\enn
The wave numbers are set to be $k_+=20$, $k_-=8$, and the Cauchy data are measured on $\G_{H,A}$ with
$H=1.5,~A=10$. Fig. \ref{fig3} presents the reconstructed results from data without noise,
with $20\%$ noise and $40\%$ noise, respectively.
\begin{figure}[htbp]
  \centering
  \subfigure[\textbf{No noise }]{\includegraphics[width=1.65in]{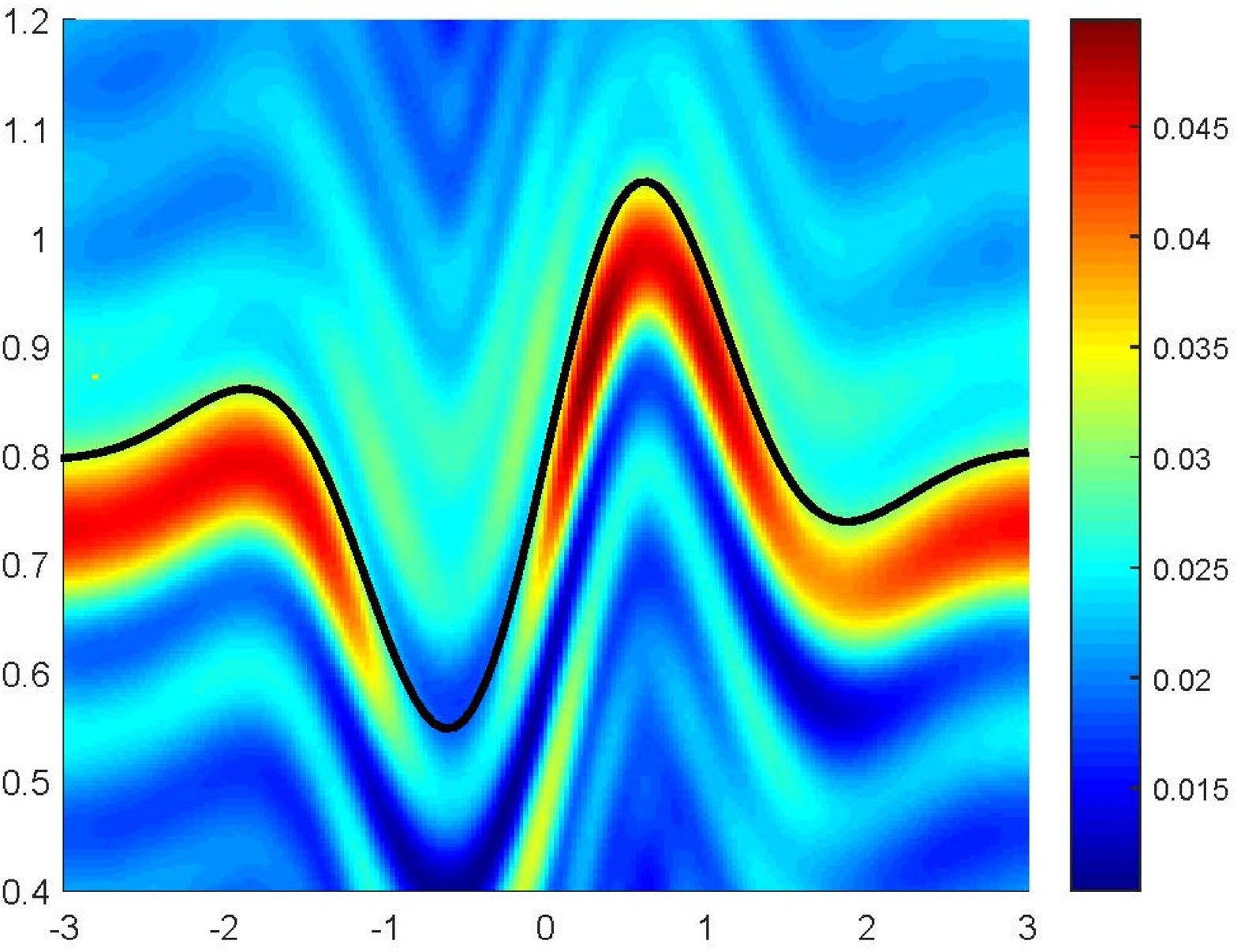}}
  \subfigure[\textbf{20\% noise }]{\includegraphics[width=1.65in]{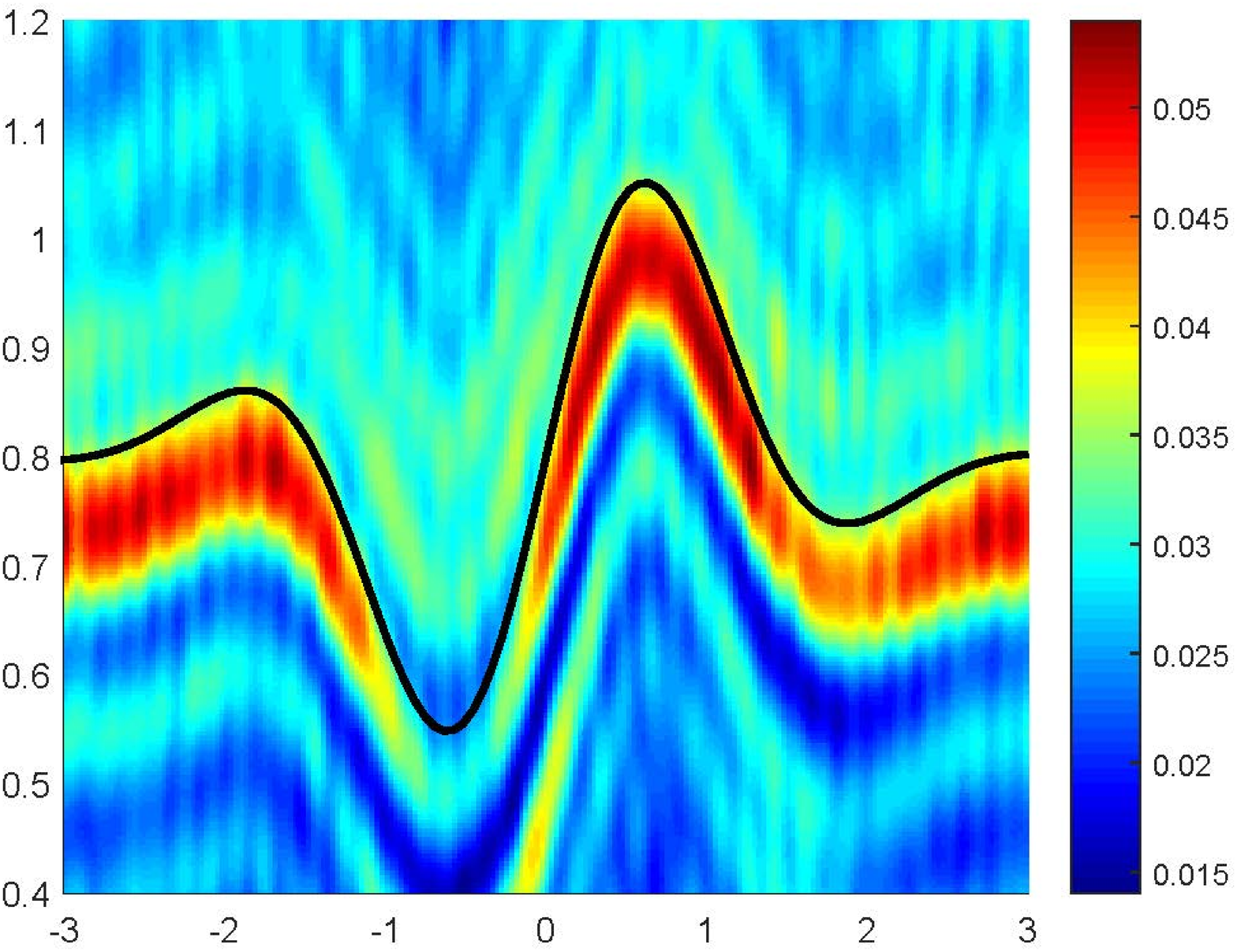}}
  \subfigure[\textbf{40\% noise }]{\includegraphics[width=1.65in]{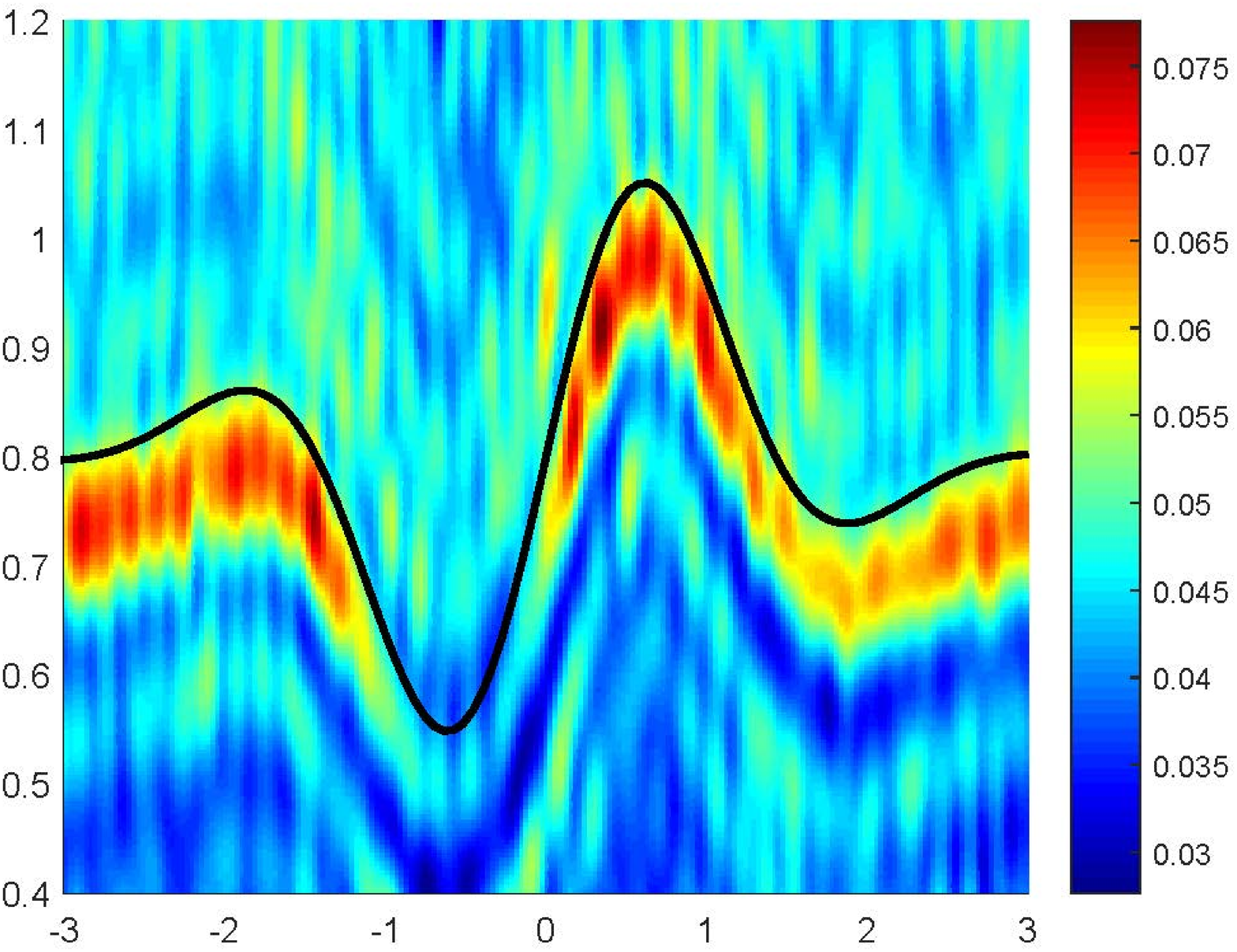}}
  \subfigure[\textbf{No noise }]{\includegraphics[width=1.65in]{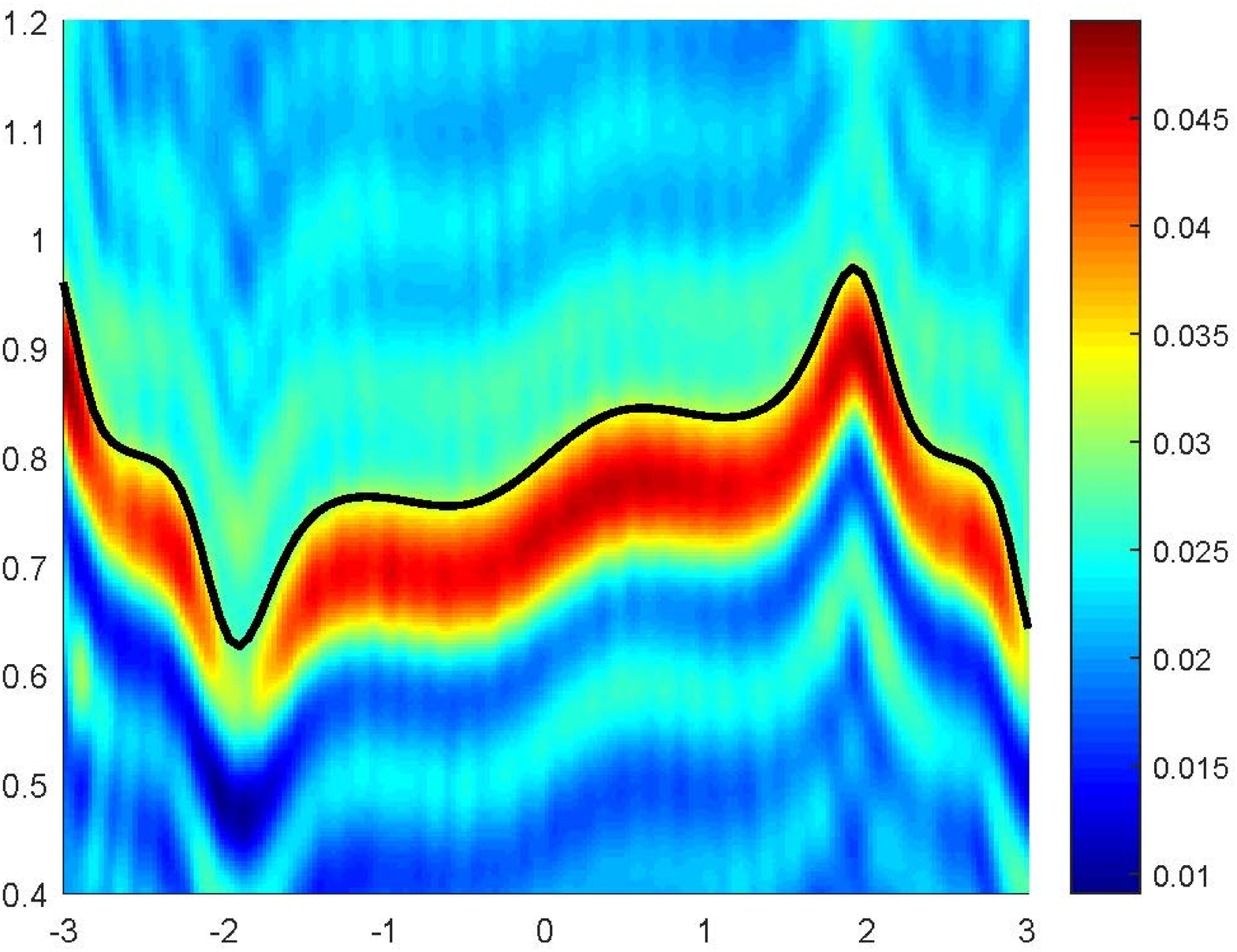}}
  \subfigure[\textbf{20\% noise }]{\includegraphics[width=1.65in]{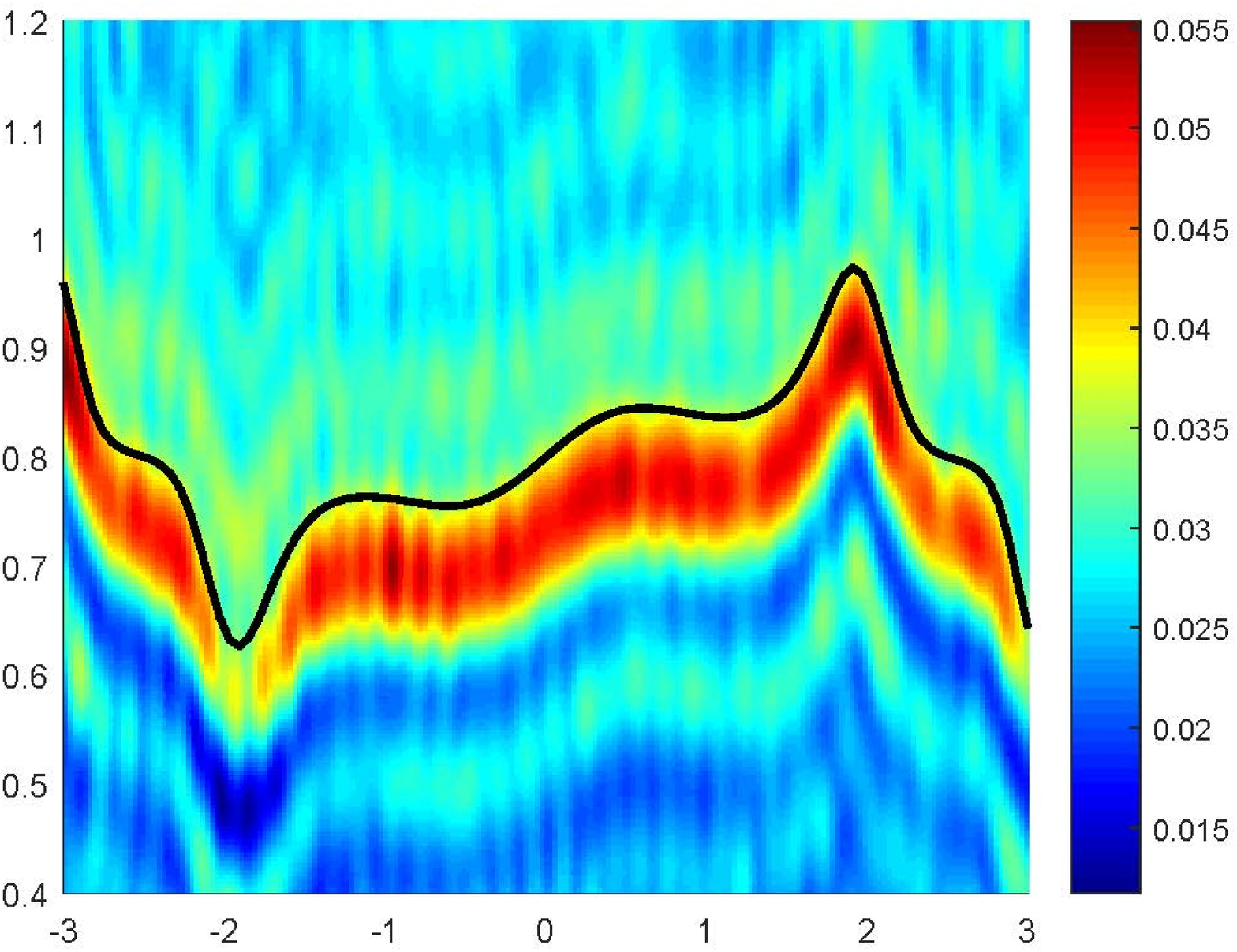}}
  \subfigure[\textbf{40\% noise }]{\includegraphics[width=1.65in]{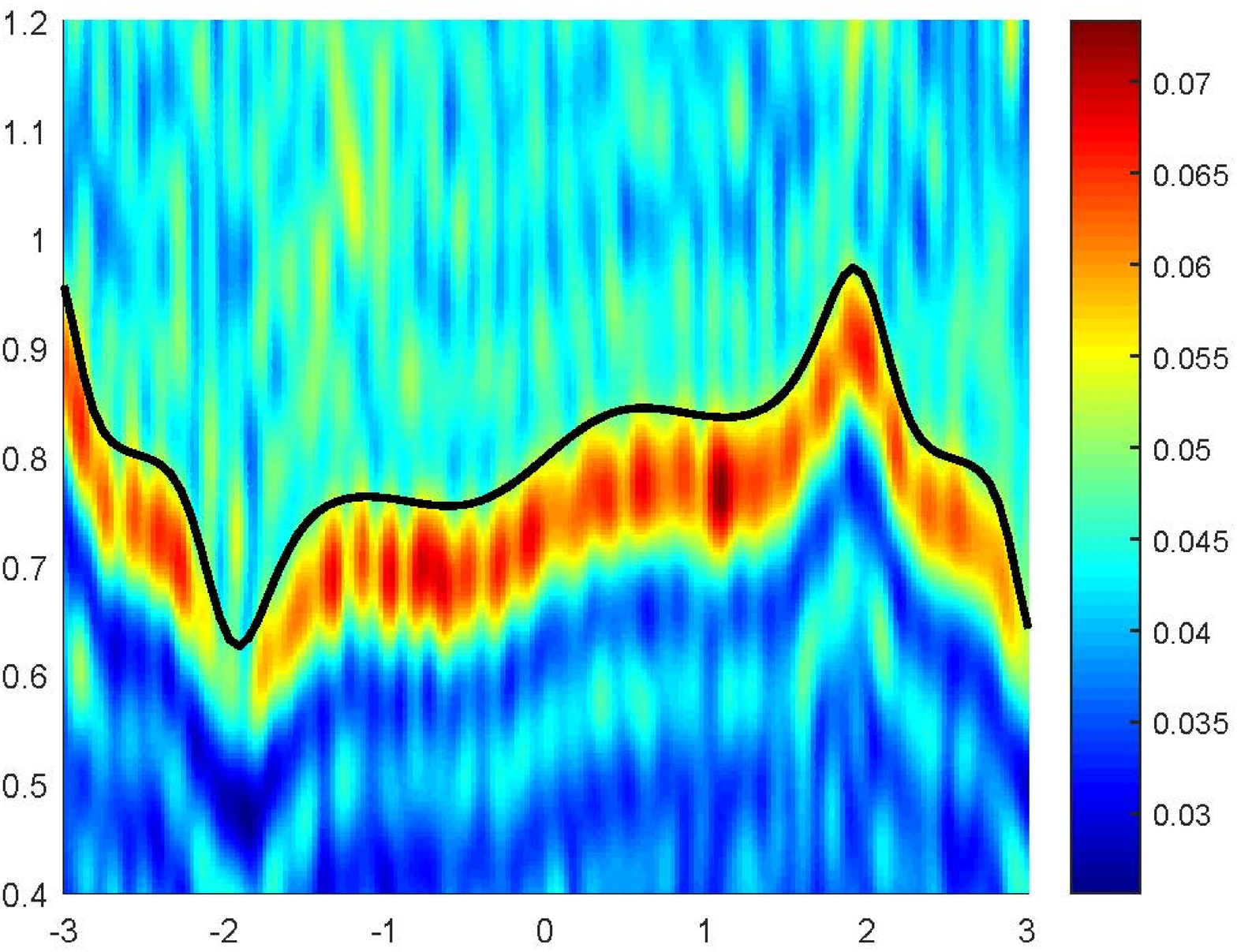}}
\caption{Reconstruction of a penetrable rough surface from data at different noise levels.
The top row is the reconstructions of $\Gamma_5$ and
the bottom row is the reconstructed results of $\Gamma_6$.
}\label{fig3}
\end{figure}

The above numerical examples and several other examples carried out but not presented here
illustrate that the direct imaging method gives an accurate and stable reconstruction of
unbounded rough surfaces. The method is very robust to noise in the measured data and is
independent of the physical property of the rough surfaces.

\section{Conclusion}

We proposed a direct imaging method for inverse scattering problems by an unbounded rough surface.
Our imaging method does not need to know the property of the rough surface in advance, so it can be used
to reconstruct both penetrable and impenetrable rough surfaces.
Numerical experiments have also been carried out to show that the reconstruction is accurate and robust to noise.
Further, our imaging method can be extended to many other cases such as inverse elastic scattering problems by
unbounded rough surfaces. We will report such results in a forthcoming paper.

\section*{Acknowledgements}

This work is partly supported by the NNSF of China grants 91630309, 11501558 and 11571355
and the National Center for Mathematics and Interdisciplinary Sciences, CAS.

\end{document}